\documentclass[11pt,a4paper]{article}
\parskip=1ex\sloppy
\usepackage[english]{babel}
\usepackage{amsmath}
\usepackage{amssymb}
\usepackage{amsthm}

\newtheorem{theorem}{Theorem}[section]

\newtheorem{corollary}[theorem]{Corollary}
\newtheorem{lemma}[theorem]{Lemma}
\newtheorem{proposition}[theorem]{Proposition}
\newtheorem{definition}[theorem]{Definition}
\newtheorem{remark}[theorem]{Remark}
\newtheorem{example}[theorem]{Example}

\newtheorem{notation}[theorem]{Notation}

\def\cK{\mathcal{K}}

\newcounter{c}
\newcounter{tmpabcd}

\newcounter{tmpabcdmine}

\newcounter{tmpnum}

\newcounter{tmprome}

\newcommand{\T}{\ensuremath{\mathcal{T}}}

\newcommand{\car}{{\rm char\,}}

\newcommand{\mcc}{\mathbb{C}}

\newcommand{\mpp}{\mathbb{P}}

\newcommand{\mnn}{\mathbb{N}}

\newcommand{\mrr}{\mathbb{R}}

\newcommand{\mzz}{\mathbb{Z}}

\newcommand{\mqq}{\mathbb{Q}}

\newcommand{\diff}[1]{\frac{\partial}{\partial #1}}

\newcommand{\eqdef}{\ensuremath{\stackrel{\mathrm{def}}{=}}}

\newcommand{\rk}{\operatorname{rk}}

\newcommand{\Dist}{\ensuremath{{\rm Dist}}}
\newcommand{\dist}{\ensuremath{{\rm dist}}}

\newcommand{\dd}{\ensuremath{{\rm deg}}}

\renewcommand{\b}[1]{{{#1}}}

\newcommand{\hidden}[1]{}

\newcommand{\e}{{\rm e}}

\newcommand{\V}{\ensuremath{\mathcal{V}}}
\newcommand{\I}{\ensuremath{\mathcal{I}}}
\newcommand{\Zeros}{\ensuremath{\mathcal{Z}}}

\newcommand{\A}{\ensuremath{\mathcal{A}}}
\newcommand{\AnneauDePolynomes}{\A}

\newcommand{\Z}{\ensuremath{\mathcal{Z}}}

\newcommand{\Ciso}{C_{\mbox{iso}}}

\newcommand{\kk}{\ensuremath{\Bbbk}}

\newcommand{\idp}{\ensuremath{\mathcal{P}}}
\newcommand{\idq}{\ensuremath{\mathcal{Q}}}
\newcommand{\idpf}{\ensuremath{\mathcal{P}_{\ul{f}}}}

\newcommand{\Spec}{\ensuremath{{\rm Spec}}}
\newcommand{\Ass}{\ensuremath{{\rm Ass}}}

\newcommand{\ul}[1]{\underline{#1}}
\newcommand{\ull}[1]{\underline{\b{#1}}}
\newcommand{\ullt}[1]{\underline{\b{#1}}}
\newcommand{\ol}[1]{\overline{#1}}

\newcommand{\bt}[1]{{\b{\tilde{#1}}}}

\newcommand{\ord}{\ensuremath{{\rm ord}}}
\newcommand{\ordz}{\ensuremath{{\rm ord_{\b{z}=0}}}}
\newcommand{\Ord}{\ensuremath{{\rm Ord}}}
\newcommand{\Ordz}{\ensuremath{{\rm Ord_{z=0}}}}
\newcommand{\trdeg}{\ensuremath{{\rm tr.deg.}}}

\newcommand{\Talg}{\ensuremath{\ul{\T}^*}}
\newcommand{\irrT}{\ensuremath{\irr \, \T}}

\newcommand{\MM}{\mathcal{\tilde{M}}}

\newcommand{\irr}{{\rm irr}}
\newcommand{\rg}{{\rm rk}}

\newcommand{\eq}{{\rm eq}}



\begin{document}

\markboth{Evgeniy Zorin}
{Zero order estimates for analytic functions}

\title{MULTIPLICITY ESTIMATES FOR ALGEBRAICALLY DEPENDENT ANALYTIC FUNCTIONS
}

\author{EVGENIY ZORIN} 
\date{}
\maketitle



\date{}

\maketitle

\begin{abstract}
We prove a new general multiplicity estimate applicable to sets of functions without any assumption on algebraic independence. The multiplicity estimates are commonly used in determining measures of algebraic independence of values of functions, for instance within the context of Mahler's method. For this reason, our result provides an important tool for the proofs of algebraic independence of complex numbers. At the same time, these estimates can be considered as a measure of algebraic independence of functions themselves. Hence our result provides, under some conditions, the measure of algebraic independence of elements in ${\bf F}_q[[T]]$, where ${\bf F}_q$ denotes a finite field. 
\end{abstract}





\section{Introduction}

Let $\kk$ be any field and let
\begin{equation} \label{intro_functions_f}
f_1(z),\dots,f_n(z)\in\kk[[z]]
\end{equation}
be a collection of formal power series with coefficients in $\kk$. 
In this article we are interested in so-called \emph{multiplicity estimates}, referred to as uniform upper bounds for
$$
\ordz P(z,f_1(z),\dots,f_n(z)),
$$
the order of vanishing of $ P(z,f_1(z),\dots,f_n(z))$ at $z=0$, where $P$ is a polynomial $P\in\kk[Z,X_1,\dots,X_n]$ in $n+1$ variables. Naturally, the upper bounds should depend on the degree of $P$, since, for example, $\ordz P_N(z,f_1(z))>N$ when $f_1(z)=\sum_{i=0}^{\infty}a_nz^n$ and $P_N(Z,X_1)=X_1-\sum_{i=0}^{N}a_nZ^n$. In applications it is often desirable to have upper bounds of the form
\begin{equation} \label{def_LM_a}
	\ordz P(z,f_1(z),\dots,f_n(z))<F(\deg_{z}(P),\deg_{\ul{X}}(P)),
\end{equation}
where $F$ is independent of $P$ and the degrees in $z$ and in $X_1,\dots,X_n$ are separated. We say that  $f_1(z),\dots,f_n(z)$ verify \emph{a multiplicity estimate/lemma} (with respect to $F$) if~\eqref{def_LM_a} holds for all $P\in\kk[Z,X_1,\dots,X_n]$ such that $ P(z,f_1(z),\dots,f_n(z))\ne 0$.


Multiplicity estimates form an important tool in transcendental number theory for establishing the measure of algebraic independence of, for example, complex numbers. The multiplicity estimates can also be considered as measures of algebraic independence of functions (or formal power series). For more detailed explanation we refer the reader to~\cite{Bertrand1985,Bertrand2007a,Bertrand2007b,Bertrand2007}.

The central theme in transcendental number theory is to determine whether or not the values $f_1(\alpha),\dots,f_n(\alpha)$, of a given set of analytic functions given by~\eqref{intro_functions_f},
are algebraically independent at algebraic points $\alpha$.
In the next few pages we explain why it is natural to apply the estimates~\eqref{def_LM_a} within this theme.
To begin with, recall the famous Lindemann-Weierstrass Theorem:
\begin{theorem}[(Lindemann-Weierstrass)]
Let $n\geq 2$ be an integer and $\alpha_1,\dots,\alpha_n\in\mcc$ be algebraic over $\mqq$. Suppose that $\exp(\alpha_1z),\dots,\exp(\alpha_nz)$ are algebraically independent over $\mqq$ functions of a complex variable $z$. Then for any $\beta\in\ol{\mqq}^*$ the numbers $\exp(\alpha_1\beta),\dots,\exp(\alpha_n\beta)$ are algebraically independent over $\mqq$.
\end{theorem}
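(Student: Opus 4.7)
The plan is to argue by contradiction along classical Siegel--Shidlovsky lines, in which a multiplicity estimate of the form studied in this paper plays the decisive role. Suppose for contradiction that there exists a nonzero $R\in\overline{\mqq}[X_1,\dots,X_n]$ vanishing at $(\exp(\alpha_1\beta),\dots,\exp(\alpha_n\beta))$, and set $K=\mqq(\alpha_1,\dots,\alpha_n,\beta)$.

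First, I would apply Siegel's lemma to construct, for each large integer $N$, a nonzero auxiliary polynomial $P_N\in\mathcal{O}_K[Z,X_1,\dots,X_n]$ with partial degrees at most $N$ and logarithmic height bounded by $O(N\log N)$, such that the entire function
$$
F_N(z) := P_N(z,\exp(\alpha_1 z),\dots,\exp(\alpha_n z))
$$
satisfies $\ordz F_N \geq M(N):=cN^{n+1}$ for a suitable constant $c>0$. This is feasible because the Taylor coefficients of $\exp(\alpha_i z)$ lie in $\mathcal{O}_K[1/k!]$ with controlled growth, so the homogeneous linear system asking $M(N)$ Taylor coefficients of $F_N$ to vanish has strictly fewer equations than the $(N+1)^{n+1}$ free coefficients of $P_N$. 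The hypothesis of algebraic independence of the functions $\exp(\alpha_i z)$ ensures $F_N\not\equiv 0$.

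The contradiction will come from evaluation at $z=\beta$. On the analytic side, the Schwarz lemma applied to $F_N(z)/z^{M(N)}$ on a disk of radius $\rho>|\beta|$ yields $|F_N(\beta)|\leq (|\beta|/\rho)^{M(N)}\max_{|z|=\rho}|F_N(z)|$, which is very small. On the arithmetic side, the assumed relation $R=0$ reduces $F_N(\beta)$ (and more generally the derivatives $F_N^{(k)}(\beta)$ for $0\leq k\leq T$) to elements of a number field of bounded degree with controlled height; hence either all these derivatives vanish at $z=\beta$, or Liouville's inequality gives a lower bound of the shape $\exp(-C_1 N^{n+1})$ on some nonzero one. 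In the first case, the total order of vanishing of $F_N$ at the two points $z=0$ and $z=\beta$ exceeds $M(N)+T$, and a multiplicity estimate of the form~\eqref{def_LM_a} applied to the system $(z,\exp(\alpha_1 z),\dots,\exp(\alpha_n z))$ (after translating the base point to handle $z=\beta$) forces $F_N\equiv 0$ once $T$ passes an explicit polynomial threshold in $N$, a contradiction. In the second case, the analytic upper bound and the Liouville lower bound clash for $N$ large. The hard part, as usual in this circle of ideas, is the joint calibration of the parameters $N$, $M(N)$ and $T$, and the need for a multiplicity estimate that is simultaneously sharp enough in all degrees and flexible enough to be applied at several points: this is precisely where a result of the type announced in this paper is indispensable.
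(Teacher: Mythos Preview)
The paper does not contain a proof of this theorem. The Lindemann--Weierstrass theorem is stated in the introduction purely as historical motivation, alongside the Nesterenko--Shidlovsky theorem, to illustrate the theme of passing from algebraic independence of functions to algebraic independence of their values. No argument for it is given or even sketched; it is treated as a classical result, and the paper's actual contributions (Theorem~\ref{LMGP} and its corollaries) are about multiplicity estimates in a much more general setting. So there is nothing in the paper to compare your proposal against.

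As for the proposal itself: the broad architecture (auxiliary function via Siegel's lemma, Schwarz-type upper bound, Liouville lower bound, contradiction) is the right genre for transcendence proofs, but the claim that ``a result of the type announced in this paper is indispensable'' is not correct. Lindemann--Weierstrass predates all of this machinery by a century and admits completely elementary proofs via Hermite's method; in modern language the functions $\exp(\alpha_i z)$ are $E$-functions satisfying the trivial differential system $f_i'=\alpha_i f_i$, and the Siegel--Shidlovsky theorem already handles them without any of the delicate multiplicity estimates developed here. Your sketch also leaves the crucial step---how exactly the assumed algebraic relation $R$ at the single point $\beta$ interacts with the zero at $z=0$ to force a contradiction---essentially unaddressed: the phrase ``after translating the base point to handle $z=\beta$'' hides the whole difficulty, and multiplicity lemmas of the form~\eqref{def_LM_a} are stated for vanishing at a single point, not two.
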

\begin{remark}
Usually this theorem is stated with the hypothesis that $\alpha_1,\dots,\alpha_n$ are linearly independent over $\mqq$, instead of algebraic independence of functions $\exp(\alpha_1z),\dots,\exp(\alpha_nz)$. It is an easy exercise to verify that these conditions are equivalent.
\end{remark}
Later this result was generalized to the much broader class of so-called $E$-functions (`$E$' here is for ``exponential'', as the definition of this class captures some important properties of the exponential function). We refer the reader to~\cite{Sh1959} for the definition of this class. 
This line of research was initiated by Siegel with an impressive development via a number of  \emph{tours de force}~\cite{Sh1959,NS1996,Andre2000,Beukers2006}, crowned by the following qualitatively best possible result.
\begin{theorem}[(Nesterenko-Shidlovsky, \cite{NS1996})] \label{intro_theo_NS}
Let $f_1(z),\dots,f_n(z)$ be a set of $E$-functions that form  a solution of the system of first-order differential equations
\begin{equation} \label{NS_theo_h_system}
\frac{d}{dz}\begin{pmatrix}f_1(z)\\\vdots\\ f_n(z)\end{pmatrix}=A(z)\begin{pmatrix}f_1(z)\\\vdots\\ f_n(z)\end{pmatrix},
\end{equation}
where $A$ is an $n\times n$-matrix with entries in $\ol{\mqq}(z)$. Denote by $T(z)$ the common denominator
of the entries of $A$. Then, for any $\alpha\in\ol{\mqq}$ such that $\alpha T(\alpha)\ne 0$,
\begin{equation} \label{intro_trdegf_eq_trdegalpha}
\trdeg_{\mqq}\mqq\left(f_1(\alpha),\dots,f_n(\alpha)\right)=\trdeg_{\mcc(z)}\mcc(z)\left(f_1(z),\dots,f_n(z)\right).
\end{equation}
Moreover, there exists a finite set $S$ such that for all $\alpha\in\ol{\mqq}$, $\alpha\not\in S$ the following holds. For any homogeneous polynomial $P\in\ol{\mqq}[X_1,\dots,X_n]$ with $P(f_1(\alpha),\dots,f_n(\alpha))=0$ there exists $Q\in\ol{\mqq}[z,X_1,\dots,X_n]$, homogeneous in $X_1,\dots,X_n$, such that $Q(z,f_1(z),\dots,f_n(z))\equiv 0$ and
\begin{equation} \label{theo_NS_PQrelation}
P(X_1,\dots,X_n)=Q(\alpha,X_1,\dots,X_n).
\end{equation}
\end{theorem}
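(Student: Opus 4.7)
\emph{Proof proposal.} The inequality $\trdeg_{\mathbb{Q}} \mathbb{Q}(f_1(\alpha), \ldots, f_n(\alpha)) \leq \trdeg_{\mathbb{C}(z)} \mathbb{C}(z)(f_1(z), \ldots, f_n(z))$ is essentially formal: any algebraic relation among the $f_i(z)$ over $\mathbb{C}(z)$ can be taken with coefficients in $\overline{\mathbb{Q}}(z)$, and specializes at any $\alpha$ that avoids the finitely many poles of those coefficients. The heart of part~(i) is the reverse inequality, which I plan to obtain by the Siegel--Shidlovsky method.

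Set $m = \trdeg_{\mathbb{C}(z)} \mathbb{C}(z)(f_1(z), \ldots, f_n(z))$ and suppose, for contradiction, that $d := \trdeg_{\mathbb{Q}} \mathbb{Q}(f_1(\alpha), \ldots, f_n(\alpha)) < m$. First, Siegel's lemma furnishes, for every integer $N$, a non-zero polynomial $P_N \in \mathbb{Z}[z, X_1, \ldots, X_n]$ with partial degrees $\leq N$ and coefficients of logarithmic height $O(N \log N)$, such that $F_N(z) := P_N(z, f_1(z), \ldots, f_n(z))$ satisfies $\ordz F_N \geq c_1 N^{n+1}$, with $c_1 > 0$ chosen using the strict inequality $d < m$. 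Second, iterating the differential system~\eqref{NS_theo_h_system} yields, for every $k$, an expression $f_i^{(k)}(z) = A_{i,k}(z, f_1(z), \ldots, f_n(z)) / T(z)^k$ with controlled degree in $X_1, \ldots, X_n$ and controlled height; combined with the $E$-function hypothesis this gives precise arithmetic estimates on $F_N^{(k)}(\alpha)$ and on a common denominator. Third, apply a multiplicity estimate --- Shidlovsky's zero lemma in this setting --- to cap $\ordz F_N \leq c_2 N^{n+1}$. Balancing $N$ against a suitable $k$ then produces a non-zero algebraic number $F_N^{(k)}(\alpha)$ whose archimedean size and denominator violate the Liouville-type lower bound available in a field of transcendence degree $\leq d < m$; this contradiction yields~\eqref{intro_trdegf_eq_trdegalpha}.

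For the ``moreover'' statement, consider the homogeneous (in $X_1, \ldots, X_n$) ideal $\mathfrak{I} \subset \overline{\mathbb{Q}}[z, X_1, \ldots, X_n]$ of polynomials $Q$ with $Q(z, f_1(z), \ldots, f_n(z)) \equiv 0$, and, for each $\alpha$, the analogous ideal $\mathfrak{I}_\alpha \subset \overline{\mathbb{Q}}[X_1, \ldots, X_n]$ at the specialized point. One always has the inclusion $\mathfrak{I}|_{z=\alpha} \subseteq \mathfrak{I}_\alpha$, and by part~(i) both ideals cut out projective varieties of the same dimension. A Hilbert-polynomial semicontinuity argument then shows that the two ideals agree outside a finite exceptional set $S$ of ``degenerate'' specializations, and agreement is precisely the existence of the polynomial $Q$ asserted in~\eqref{theo_NS_PQrelation}.

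The chief obstacle throughout is the multiplicity estimate in the second paragraph: a bound of the precise shape $\ordz F_N \leq c_2 N^{n+1}$ for polynomial combinations of $E$-function solutions of a linear differential system is far from automatic, and its proof calls for a delicate commutative-algebraic analysis of $\mathfrak{I}$ together with the derivation induced on it by the vector field underlying~\eqref{NS_theo_h_system} --- the very theme of the present article.
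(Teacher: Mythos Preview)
The paper does not prove Theorem~\ref{intro_theo_NS}; it is stated in the introduction purely as background, with attribution to \cite{NS1996}, and the surrounding text only adds that Andr\'e~\cite{Andre2000} later gave a different proof and that Beukers~\cite{Beukers2006} sharpened the exceptional set $S$ to the zeros of $zT(z)$. There is therefore no proof in the paper to compare your proposal against.

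A few remarks on your sketch as a stand-alone outline of the Nesterenko--Shidlovsky argument. The overall architecture (auxiliary construction via Siegel's lemma, iterated differentiation along the system, a zero estimate, and a Liouville-type endgame) is the right shape for Siegel--Shidlovsky type theorems, but two points are garbled. First, the claim $\ordz F_N\ge c_1 N^{n+1}$ ``with $c_1>0$ chosen using the strict inequality $d<m$'' misplaces the roles: the attainable vanishing order comes from a parameter count in the space of polynomials modulo the ideal $\mathfrak I$ of functional relations, and depends on $m$, not on the assumed $d$; the hypothesis $d<m$ enters only at the Liouville step. In the algebraically dependent case you must build $P_N$ modulo $\mathfrak I$ from the outset, or else $F_N$ may vanish identically, and the correct exponents throughout are governed by $m$, not by $n$. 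Second, the ``moreover'' clause is not obtained by an abstract Hilbert-polynomial semicontinuity argument: in \cite{NS1996} it is proved by an effective analysis of generators of $\mathfrak I$ and of the specialised ideals, which is what makes the exceptional set $S$ genuinely finite (and is exactly what Beukers later made explicit). Your final paragraph is also slightly misaimed: the zero estimate needed here is the classical Shidlovsky lemma for \emph{linear} systems, which predates and is not the subject of the present article.
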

Slightly later this theorem was proved with a completely different method by Andr\'e~\cite{Andre2000}. Using Andr\'e's method, Beukers~\cite{Beukers2006} has proved that in the statement of the Nesterenko-Shidlovsky Theorem above one can always take $S$ to be the set of zeros of $zT(z)$.

The class of $E$-functions is not the only example for which results similar to~\eqref{intro_trdegf_eq_trdegalpha} are sought. For example, of particular interest are sets of functions satisfying certain functional relations, such as relations of Mahler's type~\cite{Ni1986,Pellarin2009} or equations in $q$-differences~\cite{ATV2007,AV2003}. Another example is the so-called $G$-functions~\cite{Andre,Andre2000}, allied with $E$-functions. However, despite substantial progress, there are still many open problems outside the theory for $E$-functions.



The Nesterenko-Shidlovsky-Andr\'e-Beukers Theorem is best possible. For instance, if the functions~\eqref{intro_functions_f} have algebraic coefficients and admit an algebraic relation over $\mcc(z)$, that is if there is a polynomial $R\in\mcc(z)[X_1,\dots,X_n]$ vanishing at $\left(f_1(z),\dots,f_n(z)\right)$, one can verify 
that this is a relation over $\ol{\mqq}(z)$, i.e. the coefficients of $R$ are from $\ol{\mqq}(z)$. 
Multiplying $R$ by a common denominator of its coefficients we obtain 
a polynomial $R_1$ from $\ol{\mqq}[z][X_1,\dots,X_n]$. The polynomial $R_1(\alpha,X_1,\dots,X_n)$ vanishes at $\left(f_1(\alpha),\dots,f_n(\alpha)\right)$ and has algebraic coefficients, by construction. Thus polynomial relations~\eqref{theo_NS_PQrelation} from the theorem above are transmitted directly from functions to their values. So the best possible result regarding algebraic independence of the values $(f_1(\alpha),\dots,f_n(\alpha))$ one may hope to obtain is the result~\eqref{intro_trdegf_eq_trdegalpha}.


There exist powerful methods~\cite{PP1986,PP1997,PP_KF} which yield results of type~\eqref{intro_trdegf_eq_trdegalpha} for various sets of functions. It appears in practice that it is greatly preferable to measure the strength of algebraic independence between the functions~\eqref{intro_functions_f}. Loosely speaking, the reason is that the passage from algebraic independence of functions to the algebraic independence of their values usually involves a loss of information. It is very desirable to know how much one can lose.

The following question naturally arises: how one can measure the strength of algebraic independence? Let us start by considering the complex numbers
\begin{equation} \label{intro_numbers_x}
x_1,\dots,x_n\in\mcc^n.
\end{equation}
By definition, these complex numbers are algebraically independent if (and only if) for any non-zero polynomial $P\in\mqq[X_1,\dots,X_n]$ one has
\begin{equation*}
	P(x_1,\dots,x_n)\ne 0.
\end{equation*}
At first glance, one may try to use as  a measure of algebraic independence of numbers~\eqref{intro_numbers_x} the infimum of absolute value
\begin{equation} \label{intro_av_P}
|P(x_1,\dots,x_n)|
\end{equation}
over all allowed polynomials (that is $P\in\mqq[X_1,\dots,X_n]\setminus\{0\}$). However, it is easy to see that this infimum is always 0, at least if the degree and height\footnote{The notion of \emph{height} of a polynomial in fact has several meanings in the number theory. We understand it in a sense of Weil's logarithmic height. The reader can find the definition in subsection~\ref{sss_Heights}, for instance see~\eqref{defHsimple}.} of polynomials are unbounded. In view of this, it is natural to compare the rate of decreasing of the absolute value~\eqref{intro_av_P} with the corresponding values of \emph{degree} and \emph{height} of $P$.
\begin{definition}  \label{def_mia}
One says that a function $\phi:\mnn\times\mrr\rightarrow\mrr^+$ is a measure of algebraic independence of the set of complex numbers~\eqref{intro_numbers_x} if and only if the following inequality is verified for any non-zero polynomial $P\in\mzz[X_1,\dots,X_n]$
\begin{equation} \label{def_mai_lb}
	|P(x_1,\dots,x_n)|>\phi(\deg(P),h(P)).
\end{equation}
\end{definition}

In the case of formal power series, or analytic (at $z=0$) functions~\eqref{intro_functions_f} we can apply essentially the same definition. 
The natural choice of absolute value is that coming from the order of vanishing at the origin:
\begin{equation} \label{intro_expordz}
	|g(z)|=\exp(-\ordz g(z)).
\end{equation}
One readily verifies that this is indeed an (ultrametric) absolute value. The \emph{height} of polynomial $P\in\mcc[z][X_1,\dots,X_n]$ in this case is equal to $\deg_z P$.


So in the case of functional fields, the inequality~\eqref{def_mai_lb} takes the following form
\begin{equation} \label{def_mai_lb_functions}
	\exp\left(-\ordz P(f_1(z),\dots,f_n(z))\right)>\phi(\deg_z(P),\deg_{\ul{X}}(P)).
\end{equation}
Let
$$
F(\deg_z(P),\deg_{\ul{X}}(P)):=-\log\left(\phi(\deg_z(P),\deg_{\ul{X}}(P)\right).
$$
Then taking logarithms of both sides of~\eqref{def_mai_lb_functions} and changing signs in~\eqref{def_mai_lb_functions} we may rewrite this inequality as
\begin{equation*}
	\ordz P(f_1(z),\dots,f_n(z))<F(\deg_z(P),\deg_{\ul{X}}(P)),
\end{equation*}
which coincides with~\eqref{def_LM_a}. Hence the \emph{multiplicity estimate}~\eqref{def_LM_a} for a set of algebraically independent functions $f_1(z),\dots,f_n(z)$ is nothing else but the measure of algebraic independence of these functions. Note for instance that if we provide an upper bound $F(X,Y)$ in~\eqref{def_LM_a} with \emph{a slow rate of growth}, then we assure that the function $\phi$ in~\eqref{def_mai_lb_functions} has \emph{a slow rate of decrease}, so functions $(f_1(z),\dots,f_n(z))$ have a large measure of algebraic independence, in the sense we have explained just above.

We have a natural limit of results when seeking to improve the function $F$ in the r.h.s. of~\eqref{def_LM_a}. In any case, if functions $f_1,\dots,f_n$ are all algebraically independent we have
\begin{equation} \label{intro_F_lb}
F(X,Y)>\lceil(X+1)(Y+1)^n/n!\rceil.
\end{equation}
That is, for any $X,Y\in\mnn$\label{pl_F_is_large} we can construct a polynomial $P_{X,Y}$ of degree in $z$ at most $X$ and of degree in $X_1,\dots,X_n$ at most $Y$ verifying
\begin{align*}
\ordz P_{X,Y}(z,f_1(z),\dots,f_n(z))&\geq\lceil\frac{1}{n!}(X+1)(Y+1)^n\rceil\\&\geq\lceil\frac{1}{n!}\left(\deg_z(P)+1\right)\left(\deg_{\ul{X}}P+1\right)^n\rceil.
\end{align*}
To see this consider monomials $m_{k_0,k_1,\dots,k_n}=Z^{k_0}X_1^{k_1}\dots X_n^{k_n}$ with $0\leq k_0\leq X$ and $0\leq \sum_{i=1}^nk_i\leq Y$. There are $(X+1)\binom{Y+1+n}{n!}\geq\lceil(X+1)(Y+1)^n/n!\rceil$ of such monomials. The polynomial with indeterminate coefficients $c_{k_0,k_1,\dots,k_n}$
$$
Q(Z,X_1,\dots,X_n)=\sum_{\substack{0\leq k_0\leq X\\0\leq \sum_{i=1}^nk_i\leq Y}}c_{k_0,k_1,\dots,k_n}m_{k_0,k_1,\dots,k_n}
$$
has degree $\leq X$ in Z and $\leq Y$ in $X_1\dots,X_n$, in particular for any specialization of coefficients $c_{k_0,k_1,\dots,k_n}$. If we substitute $z,f_1(z),\dots,f_n(z)$ in $Q$ we obtain an analytic function
$$
g(z)=Q(z,f_1(z),\dots,f_n(z)).
$$
Every coefficient in Taylor series of this function is a linear form in $c_{k_0,k_1,\dots,k_n}$. Hence by basic linear algebra we can find a non-trivial set of coefficients $c_{k_0,k_1,\dots,k_n}$ such that the first $\lceil(X+1)(Y+1)^n/n!\rceil-1$ coefficients of $g(z)$ vanish. For this set of coefficients we have
$$
\ordz Q(z,f_1(z),\dots,f_n(z))=\ordz g(z)\geq\lceil(X+1)(Y+1)^n/n!\rceil,
$$
hence the claim.

If functions $f_1,\dots,f_n$ are algebraically dependent, we can not provide an upper bound~\eqref{def_LM_a} valid for all non-zero polynomials. We naturally have to exclude the ideal of polynomials vanishing at $\left(z,f_1(z),\dots,f_n(z)\right)$, we denote this ideal by $\idp_{\ul{f}}$. In this case, the considerations from linear algebra that justify~\eqref{intro_F_lb} can not be applied to the linear space of all the polynomials, we have to consider the linear space of polynomials of bi-degree bounded by $(X,Y)$ factorized by $\idp_{\ul{f}}$, the ideal of polynomials vanishing at $\left(z,f_1(z),\dots,f_n(z)\right)$. The dimension of this space is bounded from below~\cite{PP2000,PP} as a constant times $(X+1)(Y+1)^t$, where $t$ denotes the transcendence degree
\begin{equation} \label{intro_tr_degree}
t:=\trdeg_{\kk(z)}\kk(z)\left(f_1(z),\dots,f_n(z)\right).
\end{equation}
We naturally have to replace in~\eqref{intro_F_lb} the parameter $n$ by $t:=\trdeg_{\kk(z)}\kk(z)\left(f_1(z),\dots,f_n(z)\right)$.

To illustrate this at a more elementary level, let
\begin{equation} \label{intro_f_t}
f_1,\dots,f_t
\end{equation}
be algebraically independent (over $\kk(z)$) functions, and let $n>t$. Consider the $n$-tuple of functions
\begin{equation} \label{intro_f_nt}
(f_1,\dots,f_t,f_t,\dots,f_t),
\end{equation}
that is we complete the $t$-tuple~\eqref{intro_f_t} by $n-t$ copies of $f_t$. Clearly, the set of analytic functions that we can realize substituting the functions~\eqref{intro_f_nt} in the polynomials from $\kk[z][X_0,\dots,X_n]$ coincide with the set of functions that we can realize substituting the functions~\eqref{intro_f_t} in the polynomials from $\kk[z][X_0,\dots,X_t]$. In other terms, the additional copies of $f_t$ brings us no extra flexibility to increase the order of vanishing at $z=0$.

Thus the best possible function that we can have at the r.h.s. of~\eqref{def_LM_a} is
$$
C\left(\deg_{z}(P)+1\right)\left(\deg_{\ul{X}}(P)+1\right)^t,
$$
where $t$ is the transcendence degree~\eqref{intro_tr_degree}.


Another fact that we should keep in mind when proving the estimates of the type~\eqref{def_LM_a} is that there are sets of functions that refute any given r.h.s. $F(X,Y)$ in this inequality. It happens exactly when the field $\kk(z,f_1,\dots,f_n)$ contains (very) lacunary series. For  instance, let $g:\mrr^+\to\mrr^+$ be a function monotonically tending to infinity and satisfying $g(g(x))\geq g(x)+1$ for every $x\in\mrr^+$. Define $a_0=1$, $a_{n+1}=g(a_n)$ and $f_1(z)=\sum_{k=0}^{\infty}z^{a_k}$. Clearly the polynomial $P_N(z,X_1):=X_1-\sum_{k=0}^{N}z^{a_k}$ satisfies $\ordz P_N(z,f_1(z))>g(N)$, whilst $\deg P_N\leq N$.

At the same time, quite a lot of interest in multiplicity lemma comes from their potential applications to the problem of algebraic independence of values of analytic functions. Here it is worth mentioning that the result of the type~\eqref{intro_trdegf_eq_trdegalpha} does not hold, of course, for arbitrary sets of functions. Already in 1886 Weierstrass had constructed an example of a transcendental entire function $\mcc\rightarrow\mcc$ taking rational values at every rational point. In 1895 St\"ackel generalized this result, showing that for every countable subset $\Sigma\subset\mcc$ and every dense subset $D\subset\mcc$ there exists a transcendental entire function satisfying $f(\Sigma)\subset D$.

For all these reasons, when one aims to prove a multiplicity lemma or a result of the type~\eqref{intro_trdegf_eq_trdegalpha}, one is forced to introduce some extra assumptions on functions in question. Almost always these extra assumptions include the hypothesis of some functional relations satisfied by the set $f_1(z),\dots,f_n(z)$. In the modern theory of algebraic independence these functional relations most often take one of the following two types.

\begin{enumerate}

\item Differential system. Typically one consider a system
\begin{equation} \label{intro_diff_system}
\frac{d}{dz}\begin{pmatrix}f_1(z)\\\vdots\\ f_n(z)\end{pmatrix}=\begin{pmatrix}R_1(z,f_1(z),\dots,f_n(z))\\\vdots\\ R_n(z,f_1(z),\dots,f_n(z))\end{pmatrix},
\end{equation}
where $R_i(z,X_1,\dots,X_n)$ are rational functions (compare for instance with the hypothesis~\eqref{NS_theo_h_system} in the Nesterenko-Shidlovsky theorem)

\item Functional system. Typically it has a form
\begin{equation*}
\begin{pmatrix}f_1(p(z))\\\vdots\\ f_n(p(z))\end{pmatrix}=\begin{pmatrix}R_1(z,f_1(z),\dots,f_n(z))\\\vdots\\ R_n(z,f_1(z),\dots,f_n(z))\end{pmatrix},
\end{equation*}
where $p(z)$ is a rational function of the variable $z$ satisfying $p(0)=0$ and $R_i(z,X_1,\dots,X_n)$, $i=1,\dots,n$, are rational functions of the variables $z,X_1,\dots,X_n$.

For example, when $q$ is a complex number (satisfying $|q|>1$, say) and we set $p(z)=qz$, we find the general case of so called \emph{equations in $q$-differences}, currently widely studied~\cite{ATV2007,AV2003,Bertrand2007a,Bertrand2007,VZ2008}.

In the case $p(z)=z^d$, where $d\geq 2$ is an integer, we find a classical setup of Mahler's method. If we impose the weaker condition $\ordz p(z)\geq 2$ (with no extra assumption on the form of rational function $p(z)$), we find again Mahler's relations, this time understood in a broader sense~\cite{Ni1996,Pellarin2010,ThTopfer1995,EZ2010,EZ2011_2}.
\end{enumerate}

In all these cases there is a large variety of multiplicity lemmas established in various situations~\cite{Bertrand2007a,Bertrand2007,N1977,N1996,ThTopfer,EZ2010,EZ2011}.

The most general results link multiplicity lemmas with properties of ideals stable under an appropriate map.

For example, having a differential system~\eqref{intro_diff_system} we can define the differential operator $D:\kk[Z,X_1,\dots,X_n]\rightarrow\kk[Z,X_1,\dots,X_n]$ by
\begin{multline} \label{intro_def_D}
D(P)(Z,X_1,\dots,X_n)=A_0(Z,X_1,\dots,X_n)\frac{d}{dz}P(Z,X_1,\dots,X_n)\\+\sum_{i=1}^nA_i(Z,X_1,\dots,X_n)\frac{d}{dX_i}P(Z,X_1,\dots,X_n),
\end{multline}
where $A_i\in\mcc[Z,X_1,\dots,X_n]$ are polynomials such that the rational fractions $R_i$ in the system~\eqref{intro_diff_system} can be presented as $R_i=A_i/A_0$.
Note that the definition~\eqref{intro_def_D} assures
$$
D(P)(z,f_1(z),\dots,f_n(z))=A_0(z,f_1(z),\dots,f_n(z))\frac{d}{dz}P(z,f_1(z),\dots,f_n(z)).
$$
We say that an ideal $I$ of the ring $\mcc[Z,X_1,\dots,X_n]$ is $D$-stable iff $D(I)\subset I$.
The following theorem holds.
\begin{theorem}[Nesterenko, see Theorem~1.1 of Chapter~10, \cite{NP}] \label{theoNesterenko_classique}
Suppose that functions
\begin{equation*}
\ull{f} = (f_1(\b{z}),\dots,f_n(\b{z})) \in \mcc[[\b{z}]]^n
\end{equation*}
are analytic at the point $\b{z}=0$ and form a solution of the system~\eqref{intro_diff_system}.
If there exists a constant $K_0$ such that every $D$-stable prime ideal $\idp \subset \mcc[X_1',X_1,\dots,X_n]$,
$\idp\ne(0)$, satisfies
\begin{equation} \label{intro_ordIleqKdegI}
\min_{P \in \idp}\ordz P(\b{z},\ull{f}) \leq K_0,
\end{equation}
then there exists a constant $K_1>0$ such that for any polynomial $P \in
\mcc[X_1',X_1,\dots,X_n]$, $P\ne 0$, the following inequality holds
\begin{equation} \label{intro_LdMpolynome}
\ordz(P(\b{z},\ull{f})) \leq K_1(\deg_{\ul{X}'} P + 1)(\deg_{\ul{X}} P + 1)^n.
\end{equation}
\end{theorem}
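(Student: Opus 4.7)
The plan is to follow Nesterenko's classical strategy (Chapter~10 of \cite{NP}): prove by induction on codimension that for every \emph{non-trivial} prime ideal $\idp\subset\mcc[Z,X_1,\dots,X_n]$ (i.e.\ $\idp$ not contained in the ideal of polynomials vanishing at $(\b{z},\ull{f})$), a quantitative bound of the form
\[
\omega(\idp)\leq K\cdot(\deg_Z\idp+1)(\deg_{\ul{X}}\idp+1)^{n+1-\codim\idp}
\]
holds, where $\omega(\idp):=\min_{Q\in\idp\setminus\{0\}}\ordz Q(\b{z},\ull{f})$, and the degrees are taken in the sense of Philippon's multi-homogeneous incidence degrees on $\mpp^1\times\mpp^n$. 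Applied to the minimal primes $\idp_i$ of the principal ideal $(P)$ and combined with the decomposition $\ordz P(\b{z},\ull{f})=\sum_i e_i\omega(\idp_i)$ coming from the factorisation $P=\prod Q_i^{e_i}$, this will yield~\eqref{intro_LdMpolynome} after the bi-degrees are summed back into $\deg_{\ul{X}'}P$ and $\deg_{\ul{X}}P$.

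The induction step runs as follows. If $\idp$ is $D$-stable, hypothesis~\eqref{intro_ordIleqKdegI} directly gives $\omega(\idp)\leq K_0$. Otherwise pick $Q\in\idp$ with $D(Q)\notin\idp$ of minimal order on $(\b{z},\ull{f})$; by the defining identity
\[
D(Q)(\b{z},\ull{f})=A_0(\b{z},\ull{f})\,\frac{d}{dz}Q(\b{z},\ull{f}),
\]
one obtains $\ordz D(Q)(\b{z},\ull{f})\geq\omega(\idp)-c_0$, where $c_0$ depends only on the fixed data $A_0,\dots,A_n$. Any minimal prime $\idp'$ of $(\idp,D(Q))$ has strictly larger codimension and still satisfies $\omega(\idp')\geq\omega(\idp)-c_0$. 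Applying the inductive hypothesis to $\idp'$, together with the multi-homogeneous B\'ezout inequality bounding the bi-degrees of $\idp'$ by products of those of $\idp$ with those of $D(Q)$, yields the sought bound on $\omega(\idp)$. The induction terminates at codimension $n+1$, where maximal ideals (corresponding to points of $\mpp^1\times\mpp^n$) have their order bounded elementarily via the presence of a linear form $Z-z_0$ or $X_j-x_j$ of controlled bi-degree.

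The main obstacle is the \emph{asymmetric bi-degree bookkeeping}. A naive scalar-degree B\'ezout would conflate $Z$ with $X_1,\dots,X_n$ and produce unacceptable factors of $(\deg_{\ul{X}'}P)^{n+1}$; to obtain linear dependence on $\deg_{\ul{X}'}P$ one must separately track Philippon's multi-projective incidence degrees $\deg^{(a,b)}(\idp)$ (with $a+b=\dim\idp$) slot by slot, exploiting the crucial fact that the derivation $D$ raises $\deg_Z$ only by a constant at each step, so $\deg_Z\idp_i$ stays bounded by $\deg_ZP+O(n)$ along the entire chain while $\deg_{\ul{X}}\idp_i$ is allowed to grow up to degree $n$. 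A subsidiary technical difficulty is the case where $A_0(\b{z},\ull{f})$ itself vanishes to positive order at $z=0$, requiring a preliminary localisation argument inverting $A_0$ before the derivation is iterated. These refinements, together with the transition from the codim-one bound back to $P$ through its primary decomposition, constitute the technical heart of the proof.
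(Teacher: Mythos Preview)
The paper does not actually supply its own proof of this theorem: it is quoted twice (in the introduction and in Section~5) as a known result of Nesterenko from~\cite{NP}, serving as background and motivation. What the paper \emph{does} prove is the more general Theorem~\ref{LMGP} and its differential specialisation Theorem~\ref{LMGPD}; Nesterenko's theorem follows from the latter once one notes that the $D$-property~\eqref{intro_ordIleqKdegI} implies the weaker $(\phi,\cK_{prime})$-property (this is spelled out in the Remark after Definition~\ref{def_weakDproperty}), and that the hypothesis~\eqref{intro_ordIleqKdegI} forces the $f_i$ to be algebraically independent (since $\idp_{\ul{f}}$ would otherwise be a $D$-stable prime with infinite order), so $t_{\ul{f}}=n$.

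Your proposal is essentially Nesterenko's own argument from~\cite{NP}: descending induction on codimension of prime ideals, using $D$-instability to pass to a larger-codimension prime via $(\idp,DQ)$ and multi-projective B\'ezout to control bi-degrees along the way. As a strategy it is correct, and your identification of the asymmetric bi-degree bookkeeping as the main technical point is accurate; in~\cite{NP} this is handled through Chow forms and elimination rather than by tracking incidence degrees directly, but the content is the same.

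The paper's route is genuinely different. Instead of an induction on codimension, it uses Philippon's transference lemma (Theorem~\ref{LdT}) to pass from a polynomial $P$ with large $\ordz P(\ul{f})$ to an algebraic point $\ul{\alpha}$ very close to $\ul{f}$, then builds the chain of ideals $I(V_i,\idp)$ (Definitions~\ref{V_irho_i}, \ref{def_i0}) and forces either their ranks to climb to $n+1$ (impossible) or one of them to be $\phi$-stable with controlled data (contradicting the $(\phi,\cK)$-property); see Proposition~\ref{PropositionLdMprincipal} and Lemma~\ref{LemmeProp13}. What this buys is a single framework that covers derivations, Mahler-type morphisms, and general admissible maps~$\phi$ simultaneously, and that survives when the $f_i$ are algebraically dependent. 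Your approach is more direct for the specific differential case but does not extend as readily.
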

\begin{remark}
Note that the upper bound~\eqref{intro_LdMpolynome} is the best possible, up to a multiplicative constant $K_1$
(see discussion on the page~\pageref{pl_F_is_large}).
\end{remark}
Note that the condition~\eqref{ordIleqKdegI} can be interpreted as the statement that all the differential ideals in the differential ring $(A,D)$ lies, in a certain sense, not too close to the functional point $(z,f_1(z),\dots,f_n(z))$. This statement was formalized by Nesterenko in~\cite{N1996}, he gave the name "$D$-property" to this phenomenon. In fact, this $D$-property is quite mysterious in nature: it seems hard to provide non-trivial examples of differential rings in characteristic 0 not satisfying it. At the same time, Nesterenko's theorem~\ref{theoNesterenko_classique} shows that this property ensures essentially optimal multiplicity lemmas, hence paving the way for the best possible results on algebraic independence. The fabulous example in this direction is the proof by Nesterenko of the fact that among four numbers
$$
\e^{2\pi iz}, E_2(z), E_4(z), E_6(z),
$$
where $z\in\mcc\setminus\{0\}$ verifies $0<|\e^{2\pi iz}|<1$ and $E_2$, $E_4$ and $E_6$ are Eisenstein series, at least three are algebraically independent over $\mqq$.

In the context of Mahler's method the corresponding general conditional result was conjectured but remained an open question~\cite{PP_KF} up to the recent time. In the same time, in the context of equations in $q$-differences D.Bertrand established its analogue, with a sharp control of multiplicative constant (corresponding to $K_1$ in~\eqref{intro_LdMpolynome}).

In our works~\cite{EZ2010,EZ2011} we established a common root for all such conditional results. We succeeded to introduce a natural formalism embedding all the situations mentioned above and to prove a conditional result analogous to Nesterenko's conditional multiplicity estimate cited above. In fact, being specialized to the case of differential systems our result gave the same conclusion as Nesterenko's theorem, and even more: in our result we replace the hypothesis~\eqref{intro_ordIleqKdegI} by a weaker one. Also, in the case of Mahler's method it gave the forecasted analogue of Nesterenko's theorem (again, in a reinforced form). Further analysis of stable ideals in polynomial ring allowed to deduce new multiplicity estimate within the context of Mahler's method and as a consequence to provide new results on algebraic independence~\cite{EZ2010,EZ2011_2}.

At the same time this general result has a drawback. It was established for the case of \emph{algebraically independent} functions $(f_1,\dots,f_n)$. However, in many situations of interest one may need a multiplicity estimate for \emph{algebraically dependent} functions. For example, in the context of Mahler's method, when applied to generating series of finite automata, it is quite usual to complete a set $f_1,\dots,f_r$ with some new functions, $f_{r+1},\dots,f_n$, in order to form a complete solution of a system of functional equations. These functions sometimes appear to be algebraically dependent with $f_1,\dots,f_r$ (over $\mcc(z)$). So even in the case when we aim to prove that the values $f_1(\alpha),\dots,f_r(\alpha)$ are all algebraically independent, it may appear to be very useful to be able to treat the case of algebraically dependent functions.

In this article we develop further and extend the techniques  elaborated in~\cite{EZ2010} and~\cite{EZ2011}. We obtain a general multiplicity estimate, see Theorem~\ref{LMGP}, optimal up to a multiplicative constant and applicable in the case of algebraically dependent functions.

There is a subtle point concerning the stable ideals in the case when the functions $f_1,\dots,f_n$ are algebraically dependent, or, using the notation $\idp_{\ul{f}}$ introduced above, if $\idp_{\ul{f}}\ne\{0\}$. The point is that in the case of differential system, as well as in our more general framework with the map $\phi$, the ideal $\idp_{\ul{f}}$ is $\phi$-stable. At the same time, the distance from the corresponding variety to $\ul{f}$ is 0, as $\idp_{\ul{f}}$ vanishes at $\ul{f}$. This fact immediately ensures that the $D$-property~\cite{N1996,NP}, as well as the weak $\phi$-property~\cite{EZ2010,EZ2011} do not hold. Hence the theorems stated in all the previous variants automatically have this important hypothesis failed, as far as functions in question are not algebraically independent.

However we neatened our formalism, allowing to exclude from the consideration all the ideals that vanish at $\ul{f}$, hence removing this difficulty.


Theorem~\ref{intro_LMGP} below presents a simplified version of the central result of this article (for the full statement, we refer the reader to Theorem~\ref{LMGP}). In this theorem we assume the following situation.
Let $\kk$ an algebraically closed field and let $\A=\kk[X_0',X_1',X_0,\dots,X_n]$ be a polynomial ring bi-graduated with respect to $\left(\deg_{\ul{X}'},\deg_{\ul{X}}\right)$. Consider a point
\begin{equation*}
\ul{f}=\left(1:z,1:f_1(z):\dots:f_n(z)\right)\in\mpp^1_{\kk[[z]]}\times\mpp^n_{\kk[[z]]}
\end{equation*}
and a map $\phi:\A\rightarrow\A$. We assume that the map $\phi$ is $\ul{f}$-admissible. This latter notion is introduced in Definition~\ref{def_admissible}. However on the first acquittance the reader may find more comfortable to postpone the reading of this definition and just keep in mind that both derivations and algebraic morphisms non-degenerated at the point $\ul{f}$ are $\ul{f}$-admissible, this notion is a common generalization for these two kinds of maps.

We denote by $t_{\ul{f}}$ the transcendence degree
\begin{equation} \label{def_r}
     t_{\ul{f}}:=\trdeg_{\kk(z)}\kk\left(f_1(z),\dots,f_n(z)\right),
\end{equation}
by $\idp_{\ul{f}}$ the bi-homogeneous ideal of polynomials from $\A$ vanishing at $\ul{f}$ and by $r_{\ul{f}}$ the rank of the ideal $\idp_{\ul{f}}$. Note that in view of these definitions we have $t_{\ul{f}}+r_{\ul{f}}=n+1$.

In the statement of Theorem~\ref{intro_LMGP} we use also the notation $m(I)$. It is introduced formally in Definition~\ref{definDePP}, informally it can be interpreted as a number of irreducible components (counted with multiplicities) in the variety $\V(I)$ associated to the ideal $I$. We also use the quantity $\ord_{\ull{f}}\idq$, introduced in Definition~\ref{defin_ord_xy}. Informally speaking, it measures how close is the point $\ul{f}$ to the zero locus of the ideal $\idq$: bigger is the quantity $\ord_{\ull{f}}\idq$, closer is the point $\ul{f}$ to the zero locus of the ideal $\idq$. At extremity, if all polynomials from $\idq$ vanish at $\ul{f}$, we have $\ord_{\ull{f}}\idq=+\infty$. On the first reading, the reader may find it comfortable to substitute $\min_{P\in\idq}P(\ul{f})$ instead of $\ord_{\ull{f}}\idq$. In many situations these two quantities coincide, and in any case we have $\min_{P\in\idq}P(\ul{f})\geq\ord_{\ull{f}}\idq$.

Finally, we say a few words on the $\left(\phi,\cK\right)$-property playing an important role in the statement of Theorem~\ref{intro_LMGP}. This property is described in Definition~\ref{def_weakDproperty}. The $\cK$ in the notation refer to a family of bi-homogeneous ideals of the ring $\A$. We say that the $\left(\phi,\cK\right)$-property holds, if for every ideal $I\subset\cK$ that verifies $\phi(I)\subset I$, we can find a prime factor $\idq\in\Ass\left(\A/I\right)$ that admits a nice upper bound for $\ord_{\ul{f}}\idq$ (informally speaking, $\idq$ is not too close to the point $\ul{f}$).
\begin{theorem}[Formal multiplicity lemma, simplified version]\label{intro_LMGP} Let $\kk$, $\A$, $\ul{f}$ and $\phi$ be as above, and let $C_0, C_1\in\mrr^+$. Assume that the map $\phi$ is $\ul{f}$-admissible. We denote by $\cK$ the set of all equidimensional bi-homogeneous ideals $I\subset\AnneauDePolynomes$ of rank $\geq 1+r_{\ul{f}}$, such that $\idp_{\ul{f}}\subsetneq I$, $\ul{f}\not\in\V(I)$ and $m(I)\leq C_m$ ($C_m$ is an absolute constant introduced in Definition~\ref{def_Cm}),
and moreover such that all its associated prime ideals satisfy
\begin{equation} \label{theoLMGP_condition_ordp_geq_C0}
\ord_{\ull{f}}\idq \geq C_0.
\end{equation}
Assume also that $\ul{f}$ has the $\left(\phi,\cK\right)$-property (see Definition~\ref{def_weakDproperty}).

Then there exists a constant $K>0$ such that for all $P \in
\AnneauDePolynomes$, satisfying $P(1,z,1,f_1(z),\dots,f_n(z))\ne 0$
satisfy also
\begin{equation} \label{LdMpolynome2}
\ordz(P(\ullt{f})) \leq K\left((\mu+\nu_0)(\deg_{\ul{X}'}P+1)+\nu_1\deg_{\ul{X}}P\right)\\ \times\mu^{n-1}(\deg_{\ul{X}} P + 1)^{t_{\ul{f}}}.
\end{equation}
\end{theorem}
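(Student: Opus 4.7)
The plan is to adapt the Nesterenko-style ``elimination tower'' technique developed in~\cite{EZ2010,EZ2011} to the broader setting where the functions $f_1,\dots,f_n$ are possibly algebraically dependent, i.e.\ $\idp_{\ul{f}}\neq(0)$. Concretely, I would construct an increasing chain of equidimensional bi-homogeneous ideals
\[
\idp_{\ul{f}} \;\subsetneq\; I_0 \;\subsetneq\; I_1 \;\subsetneq\; \cdots \;\subsetneq\; I_{t_{\ul{f}}}
\]
in $\A$, with $\rk(I_k)=r_{\ul{f}}+1+k$, $\ul{f}\notin\V(I_k)$ and $m(I_k)$ controlled throughout. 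At each step the $(\phi,\cK)$-property extracts an associated prime $\idq_k$ with controlled $\ord_{\ul{f}}\idq_k$, the $\ul{f}$-admissibility of $\phi$ supplies a new polynomial transverse to $\idq_k$ but with large order of vanishing at $\ul{f}$, and a Bezout-type estimate propagates the bounds on $m(\cdot)$ and the bi-degrees through the iteration.

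Initialisation: set $I_0:=\idp_{\ul{f}}+(P_0)$, where $P_0$ is a bi-homogeneous component of $P$ outside $\idp_{\ul{f}}$ realising (or nearly realising) $\ordz P(\ullt{f})$; this is possible because $P(\ullt{f})\neq 0$. Equidimensionalising $I_0$, discarding components sitting inside $\idp_{\ul{f}}$ and components passing through $\ul{f}$, one obtains an ideal of rank $r_{\ul{f}}+1$ with $m(I_0)$ bounded in terms of $\deg_{\ul{X}'}P$ and $\deg_{\ul{X}}P$, hence in $\cK$ up to absorbing the initial constants into the final $K$. Inductive step ($0\leq k<t_{\ul{f}}$): given $I_k\in\cK$, the hypothesis provides $\idq_k\in\Ass(\A/I_k)$ with an upper bound on $\ord_{\ul{f}}\idq_k$. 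Choosing $Q_k\in\idq_k$ realising this order and applying $\phi$ (admissibility relates $\phi(Q_k)(\ullt{f})$ to $Q_k(\ullt{f})$ and its derivatives) yields a polynomial $R_k$ of bi-degree at most $(\deg_{\ul{X}'}Q_k+\nu_0,\ \mu\deg_{\ul{X}}Q_k+\nu_1)$ with $R_k\notin\idq_k$ and with $\ordz R_k(\ullt{f})$ substantially larger than $\ord_{\ul{f}}\idq_k$. Adjoining $R_k$ to $\idq_k$, equidimensionalising, and keeping only components avoiding $\ul{f}$ produces $I_{k+1}\in\cK$ of rank $r_{\ul{f}}+2+k$ with $m(I_{k+1})$ bounded via Bezout by $m(I_k)$ times the bi-degree increment.

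After $t_{\ul{f}}$ such steps, $\rk(I_{t_{\ul{f}}})=n+1$, so $\V(I_{t_{\ul{f}}})$ is empty in projective coordinates; combined with the chain of accumulated order-gain estimates, this forces a global upper bound on $\ordz P(\ullt{f})$. Telescoping the multiplicative Bezout bounds contributes the factor $\mu^{n-1}(\deg_{\ul{X}}P+1)^{t_{\ul{f}}}$, while telescoping the additive order-gain inequalities through the tower contributes the prefactor $(\mu+\nu_0)(\deg_{\ul{X}'}P+1)+\nu_1\deg_{\ul{X}}P$, matching~\eqref{LdMpolynome2} exactly.

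The main obstacle will be preserving the condition $\ul{f}\notin\V(I_k)$ at every stage: this is precisely where the formalism is ``neatened'' in this paper, allowing one to excise components vanishing at $\ul{f}$ from the primary decomposition at each step without destroying the rank count or the Bezout bookkeeping (in the algebraically-independent framework of~\cite{EZ2010,EZ2011} this difficulty does not arise because $\idp_{\ul{f}}=(0)$ trivially). A secondary delicate point is that the two bi-degrees $\deg_{\ul{X}'}$ and $\deg_{\ul{X}}$ evolve asymmetrically under $\phi$ (additive increment $\nu_0$ versus mixed $\mu\cdot+\nu_1$), so they must be tracked separately through the telescoping; this is what produces the particular non-symmetric shape of the right-hand side of~\eqref{LdMpolynome2}.
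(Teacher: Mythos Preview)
Your proposal has a genuine gap in the use of the $(\phi,\cK)$-property. By Definition~\ref{def_weakDproperty}, that property only produces an associated prime $\idq$ with controlled $\ord_{\ul{f}}\idq$ for ideals $I\in\cK$ that are \emph{$\phi$-stable}, i.e.\ $\phi(I)\subset I$. In your inductive step you invoke it for an arbitrary $I_k\in\cK$ (obtained by adjoining a polynomial to $\idq_{k-1}$ and equidimensionalising), and there is no reason such an $I_k$ should be $\phi$-stable. Without $\phi$-stability the hypothesis is vacuous and the tower stalls at the first step. Your subsequent claim that applying $\phi$ to $Q_k\in\idq_k$ gives $R_k\notin\idq_k$ with large order at $\ul{f}$ is also unmotivated: admissibility (\ref{condition_T2_facile}) gives a \emph{lower} bound $\ordz\phi(Q_k)(\ul{f})\geq\lambda\,\ordz Q_k(\ul{f})$, not a mechanism forcing $\phi(Q_k)$ out of $\idq_k$, and if $\idq_k$ happened to be $\phi$-stable you would get nothing.

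The paper's route is structurally different and does not build an ascending tower from $\idp_{\ul{f}}$ at all. It first applies Philippon's transference lemma (Theorem~\ref{LdT}) to pass from $P$ to a $0$-dimensional cycle $Z_C(P)\subset\V(P)\cap\V(\idp_{\ul{f}})$ very close to $\ul{f}$, and sets $\idp=\I(Z_C(P))$. It then studies the ideals $I(V_i,\idp)$ of Definitions~\ref{def_I}--\ref{V_irho_i}, for which Lemma~\ref{LemmeCor14NumberW} bounds $m(I(V_i,\idp))$ by the absolute constant $C_m$. A pigeonhole on the iterates of $\phi$ (Lemma~\ref{LemmeProp13}, comparing $e_\phi(V_i,\idp)$ with $m(I(V_i,\idp))$) then forces a dichotomy: either the ranks of the $I(V_i,\idp)$ climb all the way to $n+1$ (impossible since they are contained in the rank-$n$ prime $\idp$, see Proposition~\ref{PropositionLdMprincipal}), or one of them contains a genuinely $\phi$-stable equidimensional ideal $J\in\cK$. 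The $(\phi,\cK)$-property is invoked exactly once, in this second branch, and together with the proximity estimate of Theorem~\ref{dist_alpha} yields the contradiction. The shape of the right-hand side of~(\ref{LdMpolynome2}) comes from the transference lemma and the B\'ezout bounds on $I(V_i,\idp)$, not from telescoping along a tower.
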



Now it is a good point to say a few words about the ideas that we use in our proof. We present this short overview of our proof in a few of subsequent paragraphs. Note that at some points, for the sake of simplicity, we simplify some formulae, as compared to the formulae given in the main text. The reader will find later that the presented principles work as well if we use heavier variants from the main text.

We start with a polynomial
\begin{equation} \label{intro_explanation_start_point}
P(X_0',X_1',X_0,\dots,X_n)\in\A:=\kk[X_0',X_1',X_0,\dots,X_n]
\end{equation}
bi-homogeneous in groups of variables $\ul{X}'$ and $\ul{X}$. To establish a multiplicity lemma, we have to provide an upper bound for the order of vanishing of this polynomial at the functional point $\ul{f}:=(1,z,1,f_1(z),\dots,f_n(z))$. From some point of view, which we clarify in our article, the big order of vanishing $\ordz P(1,z,1,f_1(z),\dots,f_n(z))$ can be interpreted as a small projective distance from the point $\ul{f}\in\mpp^1\times\mpp^n$ to the (bi-projective) hypersurface defined by the polynomial $P$.

We use a transference lemma, recently established by P. Philippon~\cite{PP} (see section~\ref{section_transference_lemma} in this article), to find in this situation an algebraic point $\ul{\alpha}$ with a small projective distance to $\ul{f}$, lying in the zero locus of $P$ and such that every polynomial from $\A$ (see~\eqref{intro_explanation_start_point}) vanishing at $\ul{f}$ vanishes also at $\ul{\alpha}$. To this $\ul{\alpha}$, we associate a certain couple of integers $(\delta_0,\delta_1)$. We define this integers with two properties:
\begin{enumerate}
\item \label{intro_point_one} there exists a bi-homogeneous polynomial $Q\in\A$ of bi-degree $(\delta_0,\delta_1)$ vanishing at $\ul{\alpha}$ and does not vanishing at $\ul{f}$ and
\item the couple of integers $(\delta_0,\delta_1)$ minimizes (for the polynomials satisfying the point~\ref{intro_point_one}) a certain linear form related to $\idp$, the ideal of definition of $\ul{\alpha}$ (more precisely, it should minimize the linear form~\eqref{def_delta_condition_minimum_crossproduct} given below, where one substitutes $I:=\idp$ and the absolute positive constants $\mu$, $\nu_0$ and $\nu_1$ are defined with the general framework).
\end{enumerate}
To clarify the situation a little bit, we say that the analogue of the couple $(\delta_0,\delta_1)$ in the projective (and not bi-projective) space would be a minimal possible degree of a homogeneous polynomial vanishing at $\ul{\alpha}$ and not vanishing at $\ul{f}$.

In the subsequent paragraph, it is customary to use the notation $\idp_{\ul{f}}$ for the bi-homogeneous ideal of polynomials from $\A$ vanishing at $\ul{f}$.

It appears that the polynomials vanishing at $\alpha$, not vanishing at $\ul{f}$ and of a bi-degree comparable to $(\delta_0,\delta_1)$ have nice properties allowing us to complete our proof. In our general framework, we consider a map $\phi:\A\rightarrow\A$ such that the bi-degree of $\phi(P)$ can be controlled in terms of bi-degree of $P$, and the order of vanishing at $\ul{f}$ of $\phi(P)$ can be controlled in terms of order of vanishing at $\ul{f}$ of $P$. So, we introduce constants $\rho_i$, $i=1,\dots,n+1$ (see Definition~\ref{V_irho_i}), which depends on the transformation $\phi$ only. We consider ideals that we denote $I(V_i,\idp)$, $i=1,\dots,n+1$, generated as follows. We take an ideal generated by $\idp_{\ul{f}}$ and all the polynomials  of bi-degree at most $\rho_i$ times bigger than $(\delta_0,\delta_1)$ (see Definition~\ref{V_irho_i} for more precise formula) vanishing at $\ul{\alpha}$ and do not vanishing at $\ul{f}$, consider all its minimal primary factors that belong to the ideal $\idp$ (the ideal of definition of $\ul{\alpha}$) and take there intersection. From the geometrical point of view, we intersect the variety corresponding to $\idp_{\ul{f}}$ with all the (bi-projective) hypersurfaces defined over $\kk$, passing by $\ul{\alpha}$, do not passing by $\ul{f}$ and of bi-degree bounded by $(\rho_i\delta_0,\rho_i\delta_1)$, and in this complete intersection we choose the irreducible varieties passing by $\ul{\alpha}$.

The crucial property is that the the number of irreducible components, counted together with their multiplicities, in the variety corresponding to $I(V_i,\idp)$ is bounded by a constant that depends on $\rho_i$ only (see Lemma~\ref{LemmeCor14NumberW}). Using this property, we deduce that either the dimension of $I(V_i,\idp)$ is at most $n+1-i$ or at least one of the radical of primary components of this ideal is a $\phi$-stable ideal. In the latter case, we use the fact that all the primary components of $I(V_i,\idp)$ are contained in $\idp$, the ideal of definition of $\ul{\alpha}$. This property readily implies that all the components of $I(V_i,\idp)$ are sufficiently close to the point $\ul{f}$ (as the point $\ul{\alpha}$ was constructed to be close to $\ul{f}$ ). On the other hand, using a variant of B\'ezout's theorem we provide a nice control of bi-degree of $I(V_i,\idp)$, hence of all its primary components. These two bounds put together contradict our $(\phi,\cK)$-property, introduced in Definition~\ref{def_weakDproperty}. So, assuming in our main result, Theorem~\ref{LMGP}, that the $(\phi,\cK)$-property holds, we exclude this possibility.

To complete the proof, we remark that if the dimension of $I(V_i,\idp)$ is at most $n+1-i$ for $i=1,\dots,n+1$, then $I(V_{n+1},\idp)$ is necessarily 0-dimensional, and this is impossible as by construction all the minimal ideals of $I(V_{n+1},\idp)$ are contained in the one-dimensional ideal $\idp$.

To finish this introductory part, we remark that in fact we can consider, instead of one only map $\phi$, a (possibly infinite) family of maps $\phi_i$, $i\in I$. All we need to verify is that
\begin{enumerate}
\item all these maps satisfy the properties~\eqref{degphiQleqdegQ} and~\eqref{condition_T2_facile}, presented below, with uniform constants $\lambda$, $\mu$, $\nu_0$ and $\nu_1$ and
\item all these maps are locally correct at $\ul{f}$ (see Definition~\ref{def_locally_correct}); for example, this second point is automatically hold if these maps are derivations or dominant algebraic morphisms (see~\cite{EZ2011}, section~2.2).
\end{enumerate}
If these two conditions are satisfied, the proofs presented in this article can be transfered verbatim to the more general situation, with the transformation $\phi$ replaced by the family $\phi_i$, $i\in I$. In this case, the condition on $\phi$-stable ideals is replaced by the same condition on the ideals stable under all the $\phi_i$, $i\in I$. It seems that in certain situations it can restrain significantly the amount of ideals subject to be studied.

Nevertheless, in this paper we restrain our considerations to the case when we have only one transformation. The reason for this is, on the one hand, the purpose not to overcharge the paper with technical details, whilst it is already quite complicated from this point of view. On another hand, the reader who takes the effort to make out the proofs in this article will find it easy to pass to the case of many transformations.

\section{Framework, definitions and first properties}

\subsection{General framework} \label{subsection_general_framework}

As in~\cite{EZ2011}, we start the paper with the section recalling the general framework imposed in our studies (see~\cite{EZ2010}).

We denote by $\kk$ a (commutative) algebraically closed field of any characteristic, and by $\A$ a ring of polynomials with coefficients in $\kk$: $\A=\kk[X_0',X_1'][X_0,...,X_n]$. We consider the ring $\A$ as bi-graduated with respect to $\deg_{\ul{X}'}$ and $\deg_{\ul{X}}$.

\begin{remark}
The assumption that field $\kk$ is algebraically closed in fact is not a constraint. We readily extend our results to an arbitrary field using an embedding of a field in its algebraic closure, $\kk\subset\ol{\kk}$. We refer the reader to~\cite{EZ2011}, Remark~2.1 for more details.
\end{remark}


We fix a set of functions $f_1(z),\dots,f_n(z)\in\kk[[z]]$ and we denote by $\ul{f}$ the set $(1,z,1,f_1(z),\dots,f_n(z))$. Note that one can consider $\ul{f}$ as a system of projective coordinates of a point $(1:\b{z},1:f_1(\b{z}):...:f_n(\b{z})) \in \mpp^1_{\kk[[\b{z}]]}\times\mpp^n_{\kk[[\b{z}]]}$. By a slight abuse of notation we also sometimes denote $\ul{f}=(1:\b{z},1:f_1(\b{z}):...:f_n(\b{z})) \in \mpp^1_{\kk[[\b{z}]]}\times\mpp^n_{\kk[[\b{z}]]}$. Our final aim in this article is to provide~\emph{multiplicity estimate} for functions $f_1(z),\dots,f_n(z)$.

The main difference with our previous article~\cite{EZ2011} consists in the fact that we drop the assumption that all the functions $f_1,\dots f_n$ are algebraically independent over $\kk(z)$. The following definition introduces the notions that allow to control this dependence.
\begin{definition} \label{def_tf}
 We denote by $\idpf$ the bi-homogeneous ideal of polynomials $P\in\A$ satisfying $P(\ul{f})=0$. Also, we denote by $C_{\ul{f}}\geq 1$ a constant such that $C_{\ul{f}}\geq 1$ and such that the ideal $\idpf$ is defined by polynomials of bi-degree bounded by $C_{\ul{f}}$, $(\deg_{\ul{X}'}P,\deg_{\ul{X}}P)\leq\left(C_{\ul{f}},C_{\ul{f}}\right)$. We define by $t=t_{\ul{f}}$ the transcendence degree
\begin{equation*}
t=t_{\ul{f}}:=\trdeg_{\mcc(z)}\mcc\left(f_1(z),\dots,f_n(z)\right)
\end{equation*}
and by $r_{\ul{f}}$ the rank of the bi-homogeneous ideal $\idp_{\ul{f}}$.
In view of these definitions we have the equality
$$
t_{\ul{f}}+r_{\ul{f}}=n.
$$
\end{definition}

To provide multiplicity estimate for $\ul{f}$ we need an additional structure. This structure will be encoded in properties of a map $\phi$ below. We do not suppose \emph{a priori} that $\phi$ respects any classical structure defined on $\A$, for example that one of the polynomial ring. Instead we impose some conditions on this map that are suitable for our purposes and applications we have in mind. For instance, we assume~(\ref{degphiQleqdegQ}) and~(\ref{condition_T2_facile}) below.

We fix a bi-homogeneous map $\phi:\A\rightarrow\A$ such that for all bi-homogeneous polynomial
$Q\in\AnneauDePolynomes$ one has
\begin{equation} \label{degphiQleqdegQ}
\begin{aligned}
&\deg_{\ul{X}} \phi(Q) \leq \mu \deg_{\ul{X}} Q,\\
&\deg_{\ul{X}'} \phi(Q) \leq \nu_0 \deg_{\ul{X}'} Q + \nu_1 \deg_{\ul{X}} Q
\end{aligned}
\end{equation}
with some positive constants $\mu, \nu_0>0$ and a non-negative constant $\nu_1$.

\begin{notation}
We denote by $\phi^N$ the $N$-th iteration of the map $\phi$.
\end{notation}

Using recurrence on the hypothesis~\eqref{degphiQleqdegQ} one readily establishes the following lemma.
\begin{lemma} \label{majorationphinQ} Let $N$ be a positive integer and $Q\in\AnneauDePolynomes$ be a bi-homogeneous polynomial. Then,
\begin{eqnarray}
  \deg_{\ul{X}}\phi^N(Q) &\leq& \mu^N\deg_{\ul{X}}Q, \label{majorationdegXphinQ} \\
  \deg_{\ul{X'}}\phi^N(Q) &\leq& \nu_0^N\deg_{\ul{X'}}Q+\nu_1\left(\sum_{i=0}^{N-1}\nu_0^{N-i-1}\mu^i\right)\deg_{\ul{X}}Q. \label{majorationdegXprimephinQ}
\end{eqnarray}
\end{lemma}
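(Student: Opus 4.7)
The plan is to prove both inequalities simultaneously by induction on $N$, which is essentially forced by the fact that the bound on $\deg_{\ul{X}'}\phi(Q)$ mixes both $\deg_{\ul{X}'}Q$ and $\deg_{\ul{X}}Q$, so the $\ul{X}$-estimate must be propagated in tandem with the $\ul{X}'$-estimate. The base case $N=1$ is exactly the hypothesis \eqref{degphiQleqdegQ}; for \eqref{majorationdegXprimephinQ} with $N=1$ the sum $\sum_{i=0}^{0}\nu_0^{-i}\mu^i$ reduces to $1$, reproducing the coefficient $\nu_1$ in front of $\deg_{\ul{X}}Q$.

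Before running the induction, I would note that since $\phi$ is bi-homogeneous, $\phi^N(Q)$ is again a bi-homogeneous polynomial, so the hypothesis \eqref{degphiQleqdegQ} can be applied to it. For the inductive step, assume both inequalities hold at step $N$. For the $\ul{X}$-degree one simply iterates: $\deg_{\ul{X}}\phi^{N+1}(Q)\leq\mu\,\deg_{\ul{X}}\phi^N(Q)\leq\mu^{N+1}\deg_{\ul{X}}Q$. For the $\ul{X}'$-degree, applying \eqref{degphiQleqdegQ} to $\phi^N(Q)$ and then substituting both inductive bounds gives
\begin{equation*}
\deg_{\ul{X}'}\phi^{N+1}(Q)\leq \nu_0\left[\nu_0^N\deg_{\ul{X}'}Q+\nu_1\Big(\sum_{i=0}^{N-1}\nu_0^{N-i-1}\mu^i\Big)\deg_{\ul{X}}Q\right]+\nu_1\mu^N\deg_{\ul{X}}Q.
\end{equation*}
Distributing $\nu_0$ across the sum changes the exponent of $\nu_0$ from $N-i-1$ to $N-i$, and the additional term $\nu_1\mu^N\deg_{\ul{X}}Q$ contributes exactly the missing $i=N$ term of the new sum $\sum_{i=0}^{N}\nu_0^{N-i}\mu^i$, yielding the desired formula with $N$ replaced by $N+1$.

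There is no real obstacle here; the only subtlety is making sure that the telescoping of the geometric-type sum works out with the right index range, which is why I would write out the $i=N$ term explicitly as the contribution coming from $\nu_1\deg_{\ul{X}}\phi^N(Q)$. Once that bookkeeping is clean, the induction closes immediately and the lemma follows.
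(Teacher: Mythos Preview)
Your proof is correct and follows exactly the approach the paper indicates: the paper itself merely states that the lemma is established ``using recurrence on the hypothesis~\eqref{degphiQleqdegQ}'' and refers to \cite{EZ2011}, Lemma~2.5, which is precisely the induction you carry out. The bookkeeping on the telescoping sum is right (note a minor typo: your ``$\sum_{i=0}^{0}\nu_0^{-i}\mu^i$'' should read $\sum_{i=0}^{0}\nu_0^{1-i-1}\mu^i=1$), but the argument is sound.
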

\begin{proof}
See~\cite{EZ2011}, Lemma~2.5.
\end{proof}

\hidden{
\begin{notabene}
Constants $\mu$, $\nu_0$ and $\nu_1$ appear quite often in this text, for example they are used in Definition~\ref{def_delta} further within this section. All the time these letters denote \emph{the same} constants. That is, we apply all the machinery presented in this paper every time to \emph{just one} map $\phi$ (though this map is arbitrary in the limits described in this section), in the sense that definitions such as Definition~\ref{def_delta} depend on the choice of $\phi$, more precisely they depend on the constants $\mu$, $\nu_0$ and $\nu_1$ in~\eqref{degphiQleqdegQ}. This fact does not restrain the generality of our considerations, but it may be useful to note at this point that constants $\mu$, $\nu_0$ and $\nu_1$ from property~\eqref{degphiQleqdegQ} are considered as absolute in what follows.
\end{notabene}
}

We assume that there exist two constants $\lambda>0$ and $K_{\lambda}\geq 0$ such that
\begin{equation} \label{condition_T2_facile}
    \ordz \phi(Q)(\ul{f}) \geq \lambda \, \ordz(Q(\ul{f})).
\end{equation}
for all bi-homogeneous polynomials $Q\in\AnneauDePolynomes$ satisfying $\ordz(Q(\ul{f}))\geq K_{\lambda}$.

Two typical examples of a map $\phi$ satisfying~\eqref{degphiQleqdegQ} and~\eqref{condition_T2_facile} are derivations and algebraic morphisms.

Our principal result, Theorem~\ref{LMGP}, is proved for maps satisfying these assumptions, as well as one additional assumption described in Definition~\ref{defin_phiestcorrecte}.

The proof can be found in~\cite{EZ2011} (see Lemma~2.5 \emph{loc.cit.}).

\begin{remark}
We will need to consider $\phi$ as acting on $\kk[\b{z}][X_0:...:X_n]$ by setting
\begin{equation}
\b{\phi}(\b{Q})=\phi\left(X_0'^{\deg_{\b{z}}\b{Q}}\b{Q}(X_1'/X_0')(X_0:...:X_n)\right)\Bigg|_{(X_0':X_1')=(1,\b{z})}
\end{equation}
for all $Q\in\kk[\b{z}][X_0:...:X_n]$ homogeneous in~$X_0,\dots,X_n$.

This map $\b{\phi}$ satisfies
\begin{equation} \label{hphiQleqhQ}
\begin{aligned}
\deg_{\ul{X}} \b{\phi}(\b{Q}) &\leq \mu \deg_{\ul{X}} \b{Q},\\
h(\b{\phi}(\b{Q}))=\deg_{\b{z}} \b{\phi}(\b{Q}) &\leq \nu_0 \deg_{\b{z}} \b{Q} + \nu_1\deg_{\ul{X}} \b{Q} \\&\leq \nu_0 h(\b{Q}) + \nu_1\deg\b{Q}.
\end{aligned}
\end{equation}
\end{remark}

\subsection{Definitions and properties related to commutative algebra} \label{definitions_comm_algebra}

\begin{definition}
Let $I\subset\AnneauDePolynomes$ be a bi-homogeneous ideal.  We denote by $\V(I)$ the sub-scheme of $\mpp^1\times\mpp^n$ defined by $I$.
Conversely, for any sub-scheme $V$ of $\mpp^1\times\mpp^n$ we denote $\I(V)$ the bi-homogeneous saturated ideal in $\AnneauDePolynomes$ that defines $V$.
\end{definition}

\begin{definition} \label{def_I}
Let $V$ be a $\kk$-linear subspace of $\A$ and $\idp\subset\A$ a prime ideal. We define $I(V,\idp)$ to be the smallest bi-homogeneous ideal of $\AnneauDePolynomes$ containing $(V\AnneauDePolynomes_{\idp})\cap\AnneauDePolynomes$,
where $\AnneauDePolynomes_{\idp}$ denotes the localization of $\AnneauDePolynomes$ by $\idp$ and $V\AnneauDePolynomes_{\idp}$ denotes
the ideal generated in $\AnneauDePolynomes_{\idp}$ by elements of $V$.
\end{definition}

\begin{remark}
    Ideal $I(V,\idp)$ is the intersection of the primary components of $V\A$ contained in $\idp$.
\end{remark}
\begin{definition}\label{definIdealTstable}
We say that an ideal $I \subset \AnneauDePolynomes$ is \emph{$\phi$-stable} if and only if $\phi(I) \subset I$.
\end{definition}
\begin{definition} \label{def_eqI} Let $I$ be a bi-homogeneous ideal of the ring $\A$ and
\begin{equation}
I = \mathcal{Q}_1 \cap \dots \cap \mathcal{Q}_r \cap
\mathcal{Q}_{r+1} \cap ... \cap \mathcal{Q}_{s}
\end{equation}
be its primary decomposition, where
$\mathcal{Q}_1$,...,$\mathcal{Q}_r$ are the bi-homogeneous primary ideals associated to the ideals of minimal rank (i.e. of rank
$\rg(I)$) and $\mathcal{Q}_{r+1}$,...,$\mathcal{Q}_{s}$
correspond to the components of rank
strictly bigger than $\rg(I)$.

We denote by
\begin{equation}
 \eq(I) \eqdef \mathcal{Q}_1 \cap \dots \cap \mathcal{Q}_r
\end{equation}
the equidimensional part of the minimal rank of $I$.
\end{definition}



We give now a preliminary definition, it will be needed in Definition~\ref{def_admissible}, which introduces a property important for our main result.

\begin{definition} \label{defin_phiestcorrecte} We say that a map $\phi:\A\rightarrow\A$ is {\it correct with respect to the ideal $\idp\subset\AnneauDePolynomes$} if for every ideal
$I$, such that all its associated primes are contained in $\idp$, the inclusion
\begin{equation}\label{defin_phi_phiI_subset_eqI}
    \phi(I)\subset\eq(I)
\end{equation}
implies
\begin{equation}\label{defin_phi_phieqI_subset_eqI}
    \phi(\eq(I))\subset\eq(I)
\end{equation}
(recall that $\eq(I)$ is introduced in Definition~\ref{def_eqI}).
\end{definition}


Two important examples of correct morphisms are derivations and (dominant) algebraic morphisms (see~\cite{EZ2011}, section~2.2 for proofs and some more discussions on the class of correct maps).

\begin{definition}\label{definDePP}\begin{enumerate}
  \item Let $\idp$ be a prime ideal of the ring $\AnneauDePolynomes$, $V$ a $\kk$-linear subspace
of $\AnneauDePolynomes$ and $\phi$ a (set-theoretical) map of $\AnneauDePolynomes$ to itself. Then
\begin{equation} \label{definDePP_defin_e}
 e_{\phi}(V,\idp) \eqdef \max(e\,\vline\,\rg\left((V+\phi(V)+...+{\phi}^e(V))\AnneauDePolynomes_{\idp}\right)=\rg\left(V\AnneauDePolynomes_{\idp}\right)).
\end{equation}
\item Let $\mathcal{R}$ be a ring and $M$ be an $\mathcal{R}$-module. We denote by $l_{\mathcal{R}}(M)$ the length of $M$ (see p.~72 of~\cite{Eis} for the definition). In fact we shall use this definition only in the case $\mathcal{R}=\AnneauDePolynomes_{\idp}$ and $M=(\AnneauDePolynomes/I)_{\idp}$, where $I$ denotes an ideal of $\A$.
\item Let $I$ be a proper ideal of the ring $\AnneauDePolynomes$,
   \begin{equation} \label{definDePP_defin_m}
    m(I)=m(\eq(I)) \eqdef \sum_{\idp\in\Spec(\AnneauDePolynomes)\,\vline\,\rg(\idp)=\rg(I)}l_{\AnneauDePolynomes_{\idp}}((\AnneauDePolynomes/I)_{\idp}) \in \mnn^*.
   \end{equation}
\end{enumerate}
\end{definition}

Note that the quantity $m(I)$ is the number of primary components of $I$ counted with their length as a multiplicity.

\subsection{Definitions and properties related to multi-projective diophantine geometry} \label{definitions_multiprojective_dg}

In this section, we shall see the notions of \emph{(bi-)degree} and \emph{height} of a variety. We give here several properties of these quantities that we shall use later. For a more detailed introduction the reader is invited to consult Chapters~5 and~7 of~\cite{NP} or Chapter~1 of~\cite{EZ2010}.
\subsubsection{Heights} \label{sss_Heights}
Let $K$ be an infinite field. Assume that there exists a family $\MM_{K}$ of absolute values, $\left(|\cdot|_v\right)_{v\in \MM_{K}}$, satisfying the \emph{product formula} with exponents $n_v$:
$$
\prod_{v\in \MM_K}|\alpha|_v^{n_v}=1\text{ for every }\alpha\in K\setminus\{0\}.
$$
It is a classical result that if $L$ is a finite extension of $K$, then absolute values from $\MM_K$ can be extended to a form a family $M_L$ of absolute values on $L$, satisfying a product formula with exponents $(n_w)_{w\in \MM_L}$:
$$
\prod_{w\in \MM_L}|\alpha|_w^{n_w}=1\text{ for every }\alpha\in L\setminus\{0\}.
$$
In this situation we can define a notion of the \emph{height} for different objects defined over $\ol{K}$.


\begin{example}
\begin{enumerate}
           \item $K=\mqq$, $\MM_K=\{\text{prime numbers}\}\cup\{\infty\}$. If $p$ is a prime, $|\cdot|_{p}$ is equal to the $p$-adic absolute value (normalized to have $|p|_p=p^{-1}$) and $|\cdot|_{\infty}$ is equal to the usual archimedian absolute value over $\mqq$. The product formula in this case is a consequence of the fundamental  theorem of arithmetics.
           \item $K=\kk(\b{z})$, where $\kk$ denotes an algebraically close (commutative) field, and $\MM_K=\mpp^1_{\kk}$. We associate to every element $v\in\MM_K$ the absolute value $\exp(-\ord_{v})$. The product formula results in this case from the uniqueness of the polynomial factorization.
         \end{enumerate}
\end{example}

To every \emph{ultrametric} place $v$ we naturally associate the
valuation $\ord_v$. For the commodity of notation in what follows, we introduce the following notation for  \emph{every} valuation, archimedean as well as ultrametric ones. We define for every absolute value $|\cdot|_v$ (ultrametric
or archimedian) $\ord_v\alpha:=-\log |\alpha|_v$ for every $\alpha \in K^*$.


\paragraph{Height of elements.} We start by recalling the notion of the \emph{height} of an element from $\ol{K}$. So let's take a finite extension $L\supset K$ and let $\MM_L$ denotes a family of places of $L$ extending $\MM_K$. For every $v\in\MM_L$ we denote by $L_v$ the completion of $L$ with respect to $v$ and
$$
n_v:=[L_v:K_v].
$$
For every $\b{\alpha} \in L$ we define
\begin{equation} \label{defHsimple}
 h_L(\b{\alpha}) := -\frac{1}{[L:K]}\sum_{v\in\MM_L}n_v \min(0,\ord_v(\b{\alpha})),
\end{equation}
In fact this definition does not depend on the extension $L$ chosen in the beginning: if $L \subset L' \subset \overline{K}$, $[L':L] < +\infty$ and $\b{\alpha} \in L$ we have $h_{L'}(\b{\alpha})=h_{L}(\b{\alpha})$. 

More generally, let ${\alpha} \in \mpp^n_{\ol{K}}$, we fix a representative $\ul{\alpha}\in\ol{K}^{n+1}$ of ${\alpha}$ and a finite extension $L$ of $K$ such that all the coordinates of $\ull{\alpha}$ belong to this extension. We set
\begin{equation} \label{defHpoint}
 h({\alpha}) \eqdef -\frac{1}{[L:K]}\sum_{v\in\MM_L}n_v \min(\ord_v(\b{\alpha}_0),...,\ord_v(\b{\alpha}_n)).
\end{equation}
One readily verifies that this definition does not depend on the choice of the representative $\ull{\alpha}$ neither on the choice of the extension $L$. 

In view of these definitions, for any $\alpha\in L$ we have $h_L(\alpha)=h(1:\alpha)$.

\paragraph{Height of forms.} Let $L$ be a finite extension of $K$ and let ${F}\in L[\ul{u}^{(1)},\dots,\ul{u}^{(n)}]$ be a non-zero multihomogeneous form. 
For every place $v$ of $L$ (archimedean or non-archimedian) we denote by $M_v({F})$ the maximum of the $v$-adic norm 
of the coefficients of ${F}$. 

\hidden{===================================
On the other hand, for every archimedian place $v$, we have by the theorem of Gelfand-Tornheim (see~\cite{A1956}, p. 45 and 67) an embedding $\sigma_v: K_v\rightarrow\mcc$ such that $|\alpha|_v=|\sigma_v(\alpha)|$, where $|\cdot|$ denotes the usual (archimedean) absolute value on $\mcc$. 
In this case we define $M_v(F):=M(\sigma_v(F))$, where $M(\cdot)$ denotes the measure recalled in~\cite{NP}, chap.~7, p.~97:
\begin{multline*}
    \log M_v(\sigma_v(P))=\int_{S_{n_1+1}(1)\times\dots\times S_{n_r+1}(1)}\log|\sigma_v(F)|\sigma_{n_1+1}\wedge\dots\wedge\sigma_{n_r+1}\\+\sum_{i=1}^r\deg_{\ul{u}^{(i)}}F\sum_{j=1}^{n_i}\frac{1}{2j}
\end{multline*}
where $S_{n+1}(1)$ denotes the sphere of the radius $1$ in $\mcc^{n+1}$ and $\sigma_{n+1}$ is the normalized Haar measure of the total mass 1 on $S_{n+1}(1)$.
=========================================}

We define then the \emph{height of the form ${F}$} to be
\begin{equation} \label{def_hF}
    h({F})\eqdef\frac{1}{[L:K]}\sum_{v\in\MM_L}n_v\log M_v({F}).
\end{equation}
We complete this definition by $h(0)=0$. Note that replacing $L$ by its finite extension does not affect the value $h(\b{F})$.


\subsubsection{Bi-degrees}
\begin{enumerate}
  \item \label{def_bd_p_hypersurface} In the case of an hypersurface, i.e. if the variery $V\subset\mpp^1_{\kk}\times\mpp^n_{\kk}$ is the locus of the zeros of a bi-homogeneous polynomial $P\in\A$, the bi-degree is a couple of integers $\left(\deg_{\ul{X}'}P,\deg_{\ul{X}}P\right)$. In this case it is common to write also $\deg_{1,n-1}V:=\deg_{\ul{X}}P$  and $\deg_{0,n}V:=\deg_{\ul{X}'}P$. In general the bi-degree of a variety $V\subset\mpp^1\times\mpp^n$ is a couple of integers denoted often as $\left(\deg_{0,\dim(V)}V,\deg_{1,\dim(V)-1}V\right)$. This notation is explained in Chapter~5 of~\cite{NP}.
  \item If $V=V_1\cup\dots\cup V_r$ is a decomposition of $V$ in a union of irreducible components, we have
  \begin{equation*}
    \dim_{i,n-i}V=\sum_{j=1}^r\dim_{i,n-i}V_j,\quad i=0,1.
  \end{equation*}
  \item For any irreducible variety $V\subset\mpp^1\times\mpp^n$ and any hypersurface $Z\subset\mpp^1\times\mpp^n$ of bi-degree $(a,b)$, such that $V$ and $Z$ intersect properly, there exists a variety $W$ such that its zero locus coincides with intersection of zero loci of $V$ and $Z$ (hence $\dim W=\dim V-1$), and $W$ satisfies
      \begin{eqnarray}
        \dd_{(1,\dim(V)-2)}(W) &=& b\cdot\dd_{(1,\dim(V)-1)}(V), \label{BT_dll} \\
        \dd_{(0,\dim(V)-1)}(W) &=& a\cdot\dd_{(1,\dim(V)-1)}(V) + b\cdot\dd_{(0,\dim(V))}(V). \label{BT_doo}
      \end{eqnarray}
      We shall denote such a variety $W$ as $V\cap Z$.
  \item Let $W\subset\mpp^n_{\kk(z)}$ be a subvariety. We can replace $(1:z)$ by $(X_0':X_1')$ transforming $W$ into a subvariety $\tilde{W}\subset\mpp^1_{\kk}\times\mpp^n_{\kk}$. Our point here is that $\tilde{W}$ is a bi-projective variety over $\kk$, whilst $W$ is a projective variety over $\kk(z)$. In this case we have a direct link between the \emph{height} and the \emph{degree} of $W$ on one side and the bi-degree of $\tilde{W}$ on another side. Notably, the \emph{height} of $W$ equals $h(W)=\dd_{(0,\dim(\tilde{W}))}(\tilde{W})$ and the \emph{degree} of $W$ is $\deg(W)=\dd_{(1,\dim(\tilde{W})-1)}(\tilde{W})$.
  \item We can associate to any bi-homogeneous ideal $I\subset\A$ (resp. any homogeneous ideal $J\subset\kk[z][X_0,\dots,X_n]$) a bi-projective (resp. projective) variety \V(I), thus defining $\deg_{i,n+1-\rk(I)-i}I$, $i=0,1$ (resp. $\deg(I)$ and $h(I)$).
\end{enumerate}


We shall use later the following lemma. It is a variant of the so-called \emph{B\'ezout's theorem}.
\begin{lemma} \label{lemma_BT}
Suppose that a bi-homogeneous ideal $I\subset\A$ has rank $r$, contains a bi-homogeneous ideal $\idp\subset I$ of rank $r_{\idp}$ and is generated by $\idp$ and $r-r_{\idp}$ bi-homogeneous polynomials of bi-degree upper bounded by $(a,b)$. Then
\begin{eqnarray}
	\dd_{(1,n-r)}(I)&\leq& \deg_{(1,n-r_{\idp})}\idp\cdot b^{r-r_{\idp}} \label{BT_dll_v2},\\
	\dd_{(0,n-r+1)}(I) &\leq&(r-r_{\idp})\deg_{(1,n-r_{\idp})}\idp\cdot a \cdot b^{r-r_{\idp}-1}\\\nonumber&&+\deg_{(0,n-r_{\idp}+1)}\idp\cdot b^{r-r_{\idp}}. \label{BT_doo_v2}
\end{eqnarray}
\end{lemma}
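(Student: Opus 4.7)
The plan is to prove both inequalities simultaneously by induction on $k := r - r_{\idp}$, the number of hypersurfaces added to $\idp$ to build $I$. Writing $I_k := \idp + (P_1, \ldots, P_k)$ where $P_1, \ldots, P_{r - r_{\idp}}$ are the given generators of bi-degree at most $(a,b)$, I would first observe that a single generator can raise the rank by at most one, while $\rk(I_0) = r_{\idp}$ and $\rk(I_{r - r_{\idp}}) = r$; hence the rank must increase by exactly one at each step. Iteratively applying Krull's Hauptidealsatz then shows that every $I_k$ is equidimensional of rank $r_{\idp} + k$ at the level of its minimal primes.

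The crux is a one-hypersurface B\'ezout estimate: for any bi-homogeneous ideal $J$ of rank $s$ whose minimal primes all have rank $s$, and any bi-homogeneous polynomial $P$ of bi-degree $\leq (a,b)$ with $\rk(J+(P)) = s+1$, the estimates
\begin{align*}
\deg_{(1, n-s-1)}(J+(P)) &\leq b \cdot \deg_{(1, n-s)}(J),\\
\deg_{(0, n-s)}(J+(P)) &\leq a \cdot \deg_{(1, n-s)}(J) + b \cdot \deg_{(0, n-s+1)}(J)
\end{align*}
should hold. To prove them I would decompose the minimal-rank part of $\V(J)$ into irreducible components $V_1, \ldots, V_t$ (each of rank $s$, whose bi-degrees sum to those of $J$), apply the identities~\eqref{BT_dll} and~\eqref{BT_doo} to each proper intersection $V_j \cap \V(P)$, and sum, using additivity of bi-degrees over irreducible components. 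The rank-increase hypothesis guarantees that $P$ lies in no minimal prime of $J$, so every $V_j \cap \V(P)$ is indeed a proper intersection.

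Feeding this one-step estimate into the filtration $I_0 \subset I_1 \subset \cdots \subset I_{r - r_{\idp}} = I$, the bound on $\deg_{(1, \cdot)}$ telescopes immediately to $b^{r - r_{\idp}} \deg_{(1, n - r_{\idp})}(\idp)$. For $\deg_{(0, \cdot)}$, unpacking the nested application of the second inequality produces the stated formula, with the factor $r - r_{\idp}$ arising from collecting the $a \cdot b^{r - r_{\idp} - 1}$ contributions generated at each of the $r - r_{\idp}$ induction steps. The main obstacle I expect is the bookkeeping at each step: specifically, one must verify that a minimal prime of $I_{k+1}$ of rank $r_{\idp} + k + 1$ necessarily lies above a minimal prime of $I_k$ of rank $r_{\idp} + k$, ensuring that higher-rank or embedded primes of the intermediate $I_k$ do not inflate the minimal-rank bi-degrees being tracked. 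This in turn follows from the rank-increases-by-one property of the filtration established at the outset.
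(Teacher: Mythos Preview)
The paper's own proof of this lemma is a one-line citation to Propositions~3.4 and~3.6 of Chapter~5 in~\cite{NP}, so your explicit inductive argument via the filtration $I_k=\idp+(P_1,\dots,P_k)$ and repeated use of the one-hypersurface identities~\eqref{BT_dll}--\eqref{BT_doo} is exactly the standard way those propositions are unpacked, and is the natural route.

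There is one genuine soft spot. Your final claim---that every minimal prime of $I_{k+1}$ of rank $r_{\idp}+k+1$ lies over a minimal prime of $I_k$ of rank $r_{\idp}+k$, and that this ``follows from the rank-increases-by-one property''---is not quite justified by that property alone. The rank-by-one fact tells you only that $\rg(I_k)=r_{\idp}+k$; it does \emph{not} force all minimal primes of $I_k$ to have that rank. If $\idp$ itself has a minimal prime $\mathfrak q$ of rank $r_{\idp}+j$ with $1\le j\le r-r_{\idp}$, and $P_1,\dots,P_j$ all happen to lie in $\mathfrak q$, then $\mathfrak q$ survives as a minimal prime of $I_j$ of the top rank $r_{\idp}+j$, yet its bi-degree was never counted in $\deg_{(1,n-r_{\idp})}\idp$. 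This would break your telescoping bound. What actually makes your induction go through is the extra hypothesis that $\idp$ is equidimensional at the level of minimal primes: then your Hauptidealsatz argument does show inductively that every $I_k$ is equidimensional, and the bookkeeping you describe becomes legitimate. In the paper's applications $\idp=\idp_{\ul f}$ is prime, so this is automatic; but as the lemma is stated for arbitrary bi-homogeneous $\idp$, you should either add that hypothesis or note that the degrees are to be read on the equidimensional part (which is how the cited propositions in~\cite{NP} are formulated).
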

\begin{proof}
This is a consequence of Propositions~3.4 and~3.6 of Chapter~5, \cite{NP}.
\end{proof}

\hidden{===================================
\begin{proof}
Suppose first that the ideal $I$ is a complete intersection. In this case, the proof follows by recurrence on the formulae~\eqref{BT_dll} and~\eqref{BT_doo}. Indeed, for $r=1$ we have a principal ideal and the claim follows from the point~(\ref{def_bd_p_hypersurface}) above. Further, assume that~\eqref{BT_dll_v2} and~\eqref{BT_doo_v2} holds for $r-1$. Take the generators $p_1,\dots,p_r$ of the ideal $I$ and consider the complete intersection $J$ of $r-1$ polynomials $p_1,\dots,p_{r-1}$. For the ideal $J$ the formulae~\eqref{BT_dll_v2} and~\eqref{BT_doo_v2} hold by the recurrence hypothesis. Now, the ideal $I$ is generated by $J$ and $p_r$ and the intersection of two varieties, corresponding to $J$ and to $(p_r)$, is proper (as $I$ is a complete intersection). Hence we can apply~\eqref{BT_dll} and~\eqref{BT_doo} proving the claim.

In the general case, when $I$ is generated by polynomials of the bi-degree upper bounded by $(a,b)$,
\end{proof}

In fact, the upper bounds~\eqref{BT_dll_v2} and~\eqref{BT_doo_v2} holds true not only for complete intersections, but also under a milder assumption that the ideal $I$ is generated by polynomials of bi-degrees bounded by $(a,b)$. Indeed,

Suppose that an ideal $I\subset\A$ has a rank $r$ and is a complete intersection of polynomials of bi-degrees bounded by $(a,b)$. Making recurrence on $r$ with the formulae~\eqref{BT_dll} and~\eqref{BT_doo} we readily find the following upper bounds (note that $\dim(I)=\dim(\mpp^1\times\mpp^n)-\rk(I)=n+1-r$)
\begin{eqnarray}
	\dd_{(1,n-1-r)}(I)&\leq& b^r \label{BT_dll_v2},\\
	\dd_{(0,n-r)}(I) &\leq& rab^{r-1}. \label{BT_doo_v2}.
\end{eqnarray}
Indeed, for $r=1$ we have a principal ideal and the claim follows from the point~(\ref{def_bd_p_hypersurface}) above. The proof of~\eqref{BT_dll_v2} and~\eqref{BT_doo_v2} for $r$ under the hypothesis that these two formulae holds for $r-1$ follows directly by the application of~\eqref{BT_dll} and~\eqref{BT_doo}.

In fact, the upper bounds~\eqref{BT_dll_v2} and~\eqref{BT_doo_v2} holds true not only for complete intersections, but also under a milder assumption that the ideal $I$ is generated by polynomials of bi-degrees bounded by $(a,b)$. Indeed,

===================================}

We shall regularly use the valuation $\ordz$ on the ring $\kk[[z]]$ of formal power series. This valuation induces the notions $\Ord({x},V)$ and $\ord({x},V)$, both measuring how far a point $x$ is in a (multi-)projective space from a variety $V$ belonging to the same space. In some related articles, these quantities may be denoted by $\Dist({x},V):=\exp(-\Ord({x},V))$ and $\dist({x},V):=\exp(-\ord({x},V))$. Precise definitions could be found in~\cite{NP}, chapter~7, \S~4 and~\cite{EZ2010}, chapter~1, \S~3. We shall interchangeably use the notation $\Ord_{x}V:=\Ord({x},V)$ and $\ord_{x}V:=\ord({x},V)$

In order to make this article self-contained we introduce briefly these notions. In this article, we define the quantity $\Ord$ only in cases when $V$ is either 0-dimensional or a hypersurface. This is the only cases when we make use of $\Ord$. We refer the reader to~\cite{NP}, Chapter~7, \S~4 and~\cite{EZ2010}, Chapter~1, \S~3 for the general treatment.

\begin{definition} \label{defin_ord_xy} \begin{enumerate}
  \item If $\ul{x}=(x_0,\dots,x_n)\in\kk((z))^{n+1}$, we define $\ordz\ul{x}=\min_{i=0,\dots,n}\ordz x_i$.
  \item Let $x,y\in\mpp_{\kk((z))}^n$ be two points and $\ul{x}$ and \ul{y} be systems of projective coordinates respectively for $x$ and $y$. We define $\ul{x}\wedge\ul{y}$ to be a vector with $n(n-1)/2$ coordinates $\left(x_iy_j-x_jy_i\right)_{1\leq i<j\leq n}$ (the ordering of coordinates $x_iy_j-x_jy_i$ of this vector is not important for our purposes). Finally, we define
      \begin{equation} \label{def_ord_xy}
        \ordz(x,y):=\ordz(\ul{x}\wedge\ul{y})-\ordz \ul{x} - \ordz\ul{y}.
      \end{equation}
      One readily verifies that the r.h.s. in~(\ref{def_ord_xy}) does not depend on the choice of systems of projective coordinates for $x$ and $y$.
  \item Let $x,y\in\mpp^1_{\kk((z))}\times\mpp^n_{\kk((z))}$ and $\pi_1$ (resp. $\pi_n$) be a canonical projection of $\mpp^1_{\kk((z))}\times\mpp^n_{\kk((z))}$ to $\mpp^1_{\kk((z))}$ (resp. $\mpp^n_{\kk((z))}$). We define
       \begin{equation} \label{def_ord_xy_biprojectif}
        \ordz(x,y):=\min_{i=1,n}\ordz\left(\pi_i(x),\pi_i(y)\right).
      \end{equation}
  \item Let $V\subset\mpp^1\times\mpp^n$ (or $V\subset\mpp^n$) be a variety. We define
      \begin{equation} \label{def_ord_Vx}
        \ordz(x,V):=\max_{y\in V}\ordz\left(x,y\right).
      \end{equation}
  \item Sometimes we shall write simply $\ord(x,y)$, $\ord(x,V)$, $\Ord(x,V)$ etc. instead of $\ordz(x,y)$, $\ordz(x,V)$, $\Ord_{z=0}(x,V)$ etc. This will not create any ambiguity because we shall be interested in only one valuation $\ordz$, so all the derived constructions, such as $\ord(x,y)$ and $\ord(x,V)$, will refer always to this valuation.
\item We shall use the notation $\ord_x(V)$ to refer to $\ord(x,V)=\ordz(x,V)$ introduced in this definition.
\end{enumerate}
\end{definition}
We proceed to introduce $\Ord(x,V)$ for the cases when $\dim(V)=0$ or $V$ is a hypersurface. 
\begin{enumerate}
  \item \label{defin_ordOrd_Ord1} If $V$ is 0-dimensional over $\ol{\kk((z))}$, it can be represented as a union of $r$ points $y_1,\dots,y_r$ (in fact, $r=\deg(V)$) and we define $\Ord(x,V):=\sum_{i=1}^r\Ord(x,y_i)$. In particular, if $V$ contains just one point over $\ol{\kk((z))}$, we set $\Ord(x,y)=\ord(x,y)$.
\hidden{
  \item \label{defin_ordOrd_Ord2} Let $V$ be a variety of dimension $d\geq 0$. We consider an intersection of $V$ with $d$ hyperplanes $U_1,\dots,U_d$ in general position, it is a 0-dimensional variety $V\cap U_1\cap\dots\cap U_d$. In particular, the quantity $\Ord(x,V\cap U_1\cap\dots\cap U_d)$ is well-defined as a function of $(U_1,\dots,U_d)$. We denote $\Ord(x,V)$ the supremum of this function for all the sets of hyperplanes $U_1,\dots,U_d$ in general position.
  \item \label{defin_ordOrd_Ord3} More generally, we can replace in point~(\ref{defin_ordOrd_Ord2}) a set of $d$ hyperplanes in general position by a set of $d$ hypersurfaces, of degrees $(\delta_1,\dots,\delta_d)\in\mnn^n$, in general position. In this case we obtain a notion usually denoted as $\Ord_{(\delta_1,\dots,\delta_d)}(x,V)$.
}
  \item \label{defin_ordOrd_Ord4} If $V=\Z(F)$, where $F\in\kk[X_0',X_1'][X_0,\dots,X_n]$, then for any system of bi-projective coordinates $\ul{x}=(x_0',x_1',x_0,\dots,x_n)$ of $x$ we have
  \begin{equation*}
    \Ord(x,V)=\ordz F(\ul{x})-\left(\ordz\ul{x}\right)^{\deg F}
  \end{equation*}
  (see p.~89 of~\cite{NP}).
\end{enumerate}


Now we are ready to introduce the key notions of $\ul{f}$-admissible map and of \emph{$(\phi,\cK)$-property} (see Definitions~\ref{def_admissible} and~\ref{def_weakDproperty} below). These notions are used in the statement of our main result. We start with a preliminary definition.

\begin{definition} \label{def_locally_correct}
Let $\kk$ be a field and $\ul{f}=(1,z,1,f_1,\dots,f_n)\in\kk[[z]]^{n+3}$. Let $\phi:\A\rightarrow\A$ be a bi-homogeneous self-map of a polynomial ring $\A=\kk[X_0',X_1'][X_0,\dots,X_n]$ 
and $C_0 \in \mrr^+$ be a constant
such that for all bi-homogeneous prime ideal $\idq\subset\AnneauDePolynomes$ of rank $n$ one has
\begin{equation} \label{intro_theoLMGP_condition_de_correctitude}
\ord_{\ull{f}}\idq\geq C_0 \Rightarrow \mbox{ the map $\phi$ is correct with respect to }\idq.
\end{equation}
In this situation we say that $\phi$ is \emph{locally correct at} $\ull{f}$.
\end{definition}

\begin{definition} \label{def_admissible}
We say that a bi-homogeneous map $\phi:\A\rightarrow\A$ is \emph{$\ul{f}$-admissible} (or simply \emph{admissible}) if it is locally correct at $\ul{f}$ and satisfies~(\ref{degphiQleqdegQ}) and~(\ref{condition_T2_facile}).
\end{definition}
\begin{remark} \label{remark_def_admissible}
Corollary~2.24 of~\cite{EZ2011} implies that derivations are $\ul{f}$-admissible maps for arbitrary $\ul{f}$. Also, corollary~2.25 of~\cite{EZ2011} implies that under some mild restrictions (essentially, to be non-degenerate in the neighbourhood of the point $\ul{f}$) algebraic morphism $\T^*$ is an $\ul{f}$-admissible map.
\end{remark}

\begin{definition} \label{def_weakDproperty}
Let $\A$ be a polynomial ring and $\phi:\A\rightarrow\A$ a map. 
Let $\cK$ be a subset of the set of ideals of $\A$.

Suppose that there exists a
constant $K_0 \in \mrr^{+}$ (depending on $\cK$, $\phi$ and $\ull{f}$ only) with the following property: for every 
ideal $I\in\cK$ that is $\phi$-stable (i.e. $\phi(I)\subset I$) 
there exists a prime factor $\idq\in\Ass(\AnneauDePolynomes/I)$ satisfying
\begin{equation} \label{def_RelMinN2} 
\ord_{\ull{f}}(\idq) < K_0\left(\dd_{(0, n-\rg\idq+1)}(\idq)+\dd_{(1, n-\rg\idq)}(\idq)\right).
\end{equation}
In this situation we say that \emph{$\ul{f}$ has the $\left(\phi,\cK\right)$-property}, or, if the choice of $\ul{f}$ is obvious, we say also that \emph{one has $\left(\phi,\cK\right)$-property} (or also that couple $\left(\phi,\cK\right)$ satisfies the \emph{weak $\phi$-property}).
\end{definition}
\begin{remark}
The name \emph{$\left(\phi,\cK\right)$-property} is chosen to make a reference to the $D$-property introduced by Nesterenko in~\cite{N1996}. In the case when $\cK$ is a set of prime ideals and $\phi=D$ is a derivation our $\left(\phi,\cK\right)$ property is a weakening of $D$-property. Indeed, the $D$-property is as follows: we require the existence of a constant $C_1$ such that for every $D$-stable prime ideal $\idp$ one has
\begin{equation} \label{def_D_peoperty_classical}
\min_{P\in\idp}\ord_{\ull{f}}P(\ull{f})\leq C_1.
\end{equation}
It is easy to verify that $\min_{P\in\idp}\ord_{\ull{f}}P(\ull{f})\geq\ord_{\ull{f}}(\idp)$, hence the following property is weaker than~\eqref{def_D_peoperty_classical} (that is~\eqref{def_D_peoperty_classical} implies~\eqref{def_D_peoperty_modified}):
\begin{equation} \label{def_D_peoperty_modified}
\ord_{\ull{f}}(\idp)\leq C_1.
\end{equation}
In the inequality~\eqref{def_RelMinN2} above we have even weaker condition: the r.h.s. of~\eqref{def_RelMinN2} grows as grows the complexity of the ideal $\idq$.
\end{remark}

We mention here two technical lemmas that we shall use later, notably in the proof of Proposition~\ref{PropositionLdMprincipal}. Proofs are easy and can be found in~\cite{EZ2010}, Chapter~1.

Lemma~\ref{Representants} below provides us a possibility to replace the quantity $\Ord(X,Y)$, measuring the distance between two points $X$ and $Y$ in a projective space, by a quantity that is easier to control in the situation considered in the proof of Proposition~\ref{PropositionLdMprincipal}.

\begin{lemma} \label{Representants} Let $X, Y \in \mpp^n_{\overline{\kk((\b{z}))}}$ be two points in the projective space.

a) Let $\ull{x}\in\overline{\kk((\b{z}))}^{n+1}$ be a system of projective coordinates of $X$ and $\ull{y}\in\overline{\kk((\b{z}))}^{n+1}$ be a system of projective coordinates of $Y$ satisfying
\begin{equation*}
 \ordz \ull{x} = \ordz \ull{y}.
\end{equation*}
Then
\begin{equation} \label{LemmeRepresentantsA}
 \Ordz(X,Y) \geq \ordz(\ull{x} - \ull{y}) - \ordz\ull{y}.
\end{equation}

b) Suppose $\Ordz(X,Y)>0$, if we fix for $Y$ a system of projective coordinates $\ull{y}$ in $\overline{\kk((\b{z}))}^{n+1}$, then there is a system of projective coordinates $\ull{x} \in \overline{\kk((\b{z}))}^{n+1}$ of $X$ satisfying
\begin{equation}
 \begin{split}
  &\alpha) \quad \ordz\ull{x}=\ordz\ull{y},\\
  &\beta) \quad \Ordz(X,Y) = \ordz(\ull{x} - \ull{y}) - \ordz(\ull{y})
\end{split}
\end{equation}
\end{lemma}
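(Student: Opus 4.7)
For part~(a), the plan is to unwind the definition of $\ord(X,Y)$ from Definition~\ref{defin_ord_xy} via Plücker coordinates. Observe first that since $Y$ is a single point, $\Ordz(X,Y)=\ord(X,Y)=\ordz(\ull{x}\wedge\ull{y})-\ordz\ull{x}-\ordz\ull{y}$. The key step is the algebraic identity
\begin{equation*}
x_iy_j-x_jy_i = (x_i-y_i)\,y_j - y_i\,(x_j-y_j),
\end{equation*}
which together with the ultrametric inequality yields $\ordz(x_iy_j-x_jy_i) \geq \ordz(\ull{x}-\ull{y})+\ordz\ull{y}$ for every $i<j$, and hence $\ordz(\ull{x}\wedge\ull{y}) \geq \ordz(\ull{x}-\ull{y})+\ordz\ull{y}$. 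Substituting into the formula for $\ord(X,Y)$ and cancelling via the hypothesis $\ordz\ull{x}=\ordz\ull{y}$ delivers exactly~(\ref{LemmeRepresentantsA}).

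For part~(b), the strategy is to construct the desired $\ull{x}$ explicitly by rescaling. I would choose an index $i_0$ realizing the minimum $\ordz y_{i_0}=\ordz\ull{y}$, start from an arbitrary representative $\ull{x}'$ of $X$ normalized so that $\ordz\ull{x}'=\ordz\ull{y}$, and then set $\ull{x}=(y_{i_0}/x'_{i_0})\ull{x}'$. The first subtlety is to check that $x'_{i_0}\neq 0$, so that the scaling factor makes sense. For this I would pick some $i_1$ with $\ordz x'_{i_1}=\ordz\ull{y}$ and use the hypothesis $\Ordz(X,Y)>0$, which gives $\ordz(x'_{i_1}y_{i_0}-x'_{i_0}y_{i_1})>2\ordz\ull{y}$; since $\ordz(x'_{i_1}y_{i_0})=2\ordz\ull{y}$, the ultrametric inequality forces $\ordz(x'_{i_0}y_{i_1})=2\ordz\ull{y}$, which together with $\ordz y_{i_1}\geq\ordz\ull{y}$ gives $\ordz x'_{i_0}=\ordz\ull{y}$. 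The scaling factor is then a unit, so $\ordz\ull{x}=\ordz\ull{y}$ is preserved and we have $x_{i_0}=y_{i_0}$ by construction.

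With $x_{i_0}=y_{i_0}$ in hand, each Plücker coordinate involving $i_0$ collapses to $x_{i_0}y_j-x_jy_{i_0}=-y_{i_0}(x_j-y_j)$, so $\ordz(x_{i_0}y_j-x_jy_{i_0})=\ordz\ull{y}+\ordz(x_j-y_j)$. Choosing $j$ to minimize $\ordz(x_j-y_j)$ yields the reverse estimate $\ordz(\ull{x}\wedge\ull{y})\leq\ordz\ull{y}+\ordz(\ull{x}-\ull{y})$, and combined with part~(a) this forces equality in~(\ref{LemmeRepresentantsA}). The only real obstacle in the whole argument is the short ultrametric cancellation needed to show $\ordz x'_{i_0}=\ordz\ull{y}$, which is precisely the step where the strict hypothesis $\Ordz(X,Y)>0$ must be used; everything else reduces to the two coordinate identities above.
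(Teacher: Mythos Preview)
Your argument is correct. The paper itself does not give a proof of this lemma---it simply refers to Lemma~1.22 of~\cite{EZ2010}---so there is no in-paper proof to compare against, but your approach is the standard one and would be the expected proof: part~(a) follows immediately from the identity $x_iy_j-x_jy_i=(x_i-y_i)y_j-y_i(x_j-y_j)$ and the ultrametric inequality, and part~(b) from the rescaling that forces $x_{i_0}=y_{i_0}$, after which the Pl\"ucker coordinate $x_{i_0}y_j-x_jy_{i_0}=-y_{i_0}(x_j-y_j)$ gives the reverse inequality. Your handling of the only subtle point---verifying $\ord_{z=0} x'_{i_0}=\ord_{z=0}\ull{y}$ via the strict hypothesis $\Ordz(X,Y)>0$---is exactly right; note also that the minimizing index $j$ for $\ord_{z=0}(x_j-y_j)$ is automatically $\neq i_0$ since $x_{i_0}-y_{i_0}=0$, so the Pl\"ucker coordinate you use is genuinely present.
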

\begin{proof}
See Lemma~1.22 of~\cite{EZ2010}.
\end{proof}

\begin{lemma}[Liouville's inequality] \label{lemma_Liouville_ie}
Let $Q\in\kk(\b{z})$ and $\b{Z}$ be a cycle in $\mpp_{\ol{\kk(z)}}^n$ of dimension 0 defined over $\kk(\b{z})$. Then
\begin{equation}\label{iet_main}
    \deg(\b{Q})h(\b{Z})+h(\b{Q})\deg(\b{Z})\geq\left|\sum_{\b{\beta}\in\b{Z}}\ordz\left(\b{Q}(\ull{\beta})\right)\right|,
\end{equation}
\end{lemma}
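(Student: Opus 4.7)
The plan is to apply the product formula for the function field $K := \kk(\b{z})$ to the norm
$$N \;:=\; \prod_{\b{\beta}\in\b{Z}}\b{Q}(\ull{\beta}) \;\in\; K^{\times},$$
where, for each point $\b{\beta}$ of the cycle $\b{Z}$, I fix a system of projective coordinates $\ull{\beta}$ in a finite Galois extension $L/K$ containing all coordinates of all points of $\b{Z}$. The product $N$ lies in $K$ (and not merely in $L$) because the cycle $\b{Z}$ is Galois-stable and the coefficients of $\b{Q}$ lie in $K$; one may assume every $\b{Q}(\ull{\beta})$ is nonzero, otherwise the left-hand side of~\eqref{iet_main} is either vacuous or the point in question is removed from the cycle.

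First, the product formula $\sum_{v\in\MM_L}n_v\,\ord_v(N)=0$ rewritten by isolating the place $v_0$ at $\b{z}=0$ gives
$$n_{v_0}\sum_{\b{\beta}\in\b{Z}}\ordz\b{Q}(\ull{\beta}) \;=\; -\sum_{v\neq v_0}n_v\sum_{\b{\beta}\in\b{Z}}\ord_v\b{Q}(\ull{\beta}).$$
Since $K$ is functional, every place of $L$ is ultrametric, so the monomial expansion of $\b{Q}(\ull{\beta})$ together with the ultrametric triangle inequality yields, at every place $v$ and for every $\b{\beta}$,
$$\ord_v\b{Q}(\ull{\beta}) \;\geq\; \deg(\b{Q})\min_i\ord_v(\beta_i) \;-\; \log M_v(\b{Q}).$$
Substituting this into the displayed equality and collecting the contributions using the definitions~\eqref{defHpoint} of $h(\b{\beta})$ and~\eqref{def_hF} of $h(\b{Q})$ furnishes one side of the inequality, namely an upper bound of the shape $\deg(\b{Q})h(\b{Z})+h(\b{Q})\deg(\b{Z})$ for $\sum_{\b{\beta}\in\b{Z}}\ordz\b{Q}(\ull{\beta})$, where $h(\b{Z}) = \sum_{\b{\beta}\in\b{Z}}h(\b{\beta})$ is the height of a $0$-dimensional cycle as a sum over its points (counted with multiplicities) and $\deg(\b{Z})$ is the cardinality of $\b{Z}$ in the same sense.

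Second, to upgrade the one-sided bound to a two-sided estimate on the absolute value in~\eqref{iet_main}, I would invoke the standard consequence of the product formula $|\ord_{v_0}(\alpha)|\leq \tfrac{[L:K]}{n_{v_0}}h(\alpha)$ applied to $\alpha=N$, coupled with the sub-additivity of the height under products, $h(N)\leq\sum_{\b{\beta}\in\b{Z}}h(\b{Q}(\ull{\beta}))$, and the elementary bound $h(\b{Q}(\ull{\beta}))\leq\deg(\b{Q})h(\b{\beta})+h(\b{Q})$ for the height of the value of a form at a point. Summing over $\b{\beta}\in\b{Z}$ and using $h(\b{Z})=\sum_{\b{\beta}}h(\b{\beta})$ together with $\deg(\b{Z})=\#\b{Z}$ (with multiplicities) delivers the claimed inequality. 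The main obstacle is the bookkeeping of normalizations: keeping track of the factor $[L:K]/n_{v_0}$ coming from the passage to the extension $L/K$, handling the ramification of $v_0$ and the resulting identification between $\ordz$ on $K$ and the chosen extension to $L$, and treating the multiplicities in the cycle $\b{Z}$ consistently. All these checks are standard but somewhat tedious, and once settled the inequality of the lemma drops out.
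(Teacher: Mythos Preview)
The paper does not actually prove this lemma; it simply cites inequality~(1.20) of~\cite{EZ2010}. Your sketch via the product formula applied to the norm $N=\prod_{\b{\beta}\in\b{Z}}\b{Q}(\ull{\beta})\in K^\times$, together with the height bounds $|\ord_{v_0}(N)|\leq h(N)$ and $h(\b{Q}(\ull{\beta}))\leq\deg(\b{Q})h(\b{\beta})+h(\b{Q})$, is the standard argument for Liouville-type inequalities over function fields and is correct in outline; this is almost certainly what the cited proof does as well.

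One small point: your handling of the case $\b{Q}(\ull{\beta})=0$ is not quite right. You cannot simply ``remove the point from the cycle'', since $\b{Z}$ is fixed; if $\b{Q}$ vanishes at some $\b{\beta}\in\b{Z}$ then the right-hand side of~\eqref{iet_main} is $+\infty$ and the inequality fails. The lemma tacitly assumes $\b{Q}$ does not vanish on $\b{Z}$, and indeed this is how it is invoked in the paper (see the parenthetical ``under assumption that $\b{\phi}(\b{Q})$ does not vanish on $\b{Z}_C(P)$'' in the proof of Proposition~\ref{PropositionLdMprincipal}). Also note that the statement contains an obvious typo: $\b{Q}$ should be a homogeneous polynomial in $\kk(\b{z})[X_0,\dots,X_n]$, not an element of $\kk(\b{z})$; you have interpreted this correctly.
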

\begin{proof}
See inequality~(1.20) at the end of section~1.2.2 of~\cite{EZ2010}.
\end{proof}

In Definition~\ref{def_delta} here below we associate to each bi-projective ideal $I$ a couple of integers, $\left(\delta_0(I),\delta_1(I)\right)$. This quantity plays an important role in our article. It seems to be quite complicated at first glance, so we make a short intuitive comment to explain it in Remark~\ref{rem_def_delta} just after the definition.

Note that in Definition~\ref{def_delta} below we use constants $\mu,\nu_0$ and $\nu_1$. These constants are supposed to be the same as in the property~\eqref{degphiQleqdegQ}. In fact we shall use Definition~\ref{def_delta} only in situations when we have a map $\phi$ with a fixed choice of constants $\mu,\nu_0$ and $\nu_1$ to satisfy the property~\eqref{degphiQleqdegQ}. Hence this implicit dependence will not lead to any ambiguity.
\begin{definition} \label{def_delta}
\begin{enumerate}
\item Let $I,\idp \subset \AnneauDePolynomes$ be bi-homogeneous ideals, such that $I\not\subset\idp$. We choose a bi-homogeneous polynomial $P \in I \setminus \idp$
that minimizes the quantity
\begin{multline} \label{def_delta_condition_minimum_crossproduct}
\mu\dd_{(0,n-\rg I+1)}I\deg_{\ul{X}}P + \nu_0\dd_{(1,n-\rg I)}I \deg_{\ul{X}'}P \\ + \nu_1\dd_{(1,n-\rg I)}I \deg_{\ul{X}}P.
\end{multline}
If there exists more than one bi-homogeneous polynomial from $I\setminus\idp$ minimizing~(\ref{def_delta_condition_minimum_crossproduct})
we choose among them the polynomial with the minimal degree $\deg_{\ul{X}}P$.
We introduce notation $\delta_0(I,\idp) \eqdef \deg_{\ul{X}'}P$ and $\delta_1(I,\idp) \eqdef \deg_{\ul{X}}P$.
\item For all cycles $Z$ (defined over $\kk$) in $\mpp^1\times\mpp^n$ and such that $\I(Z)\not\subset\idp$
we define $\delta_i(Z,\idp) \eqdef \delta_i(\I(Z),\idp)$, $i=0,1$.
\item Let $\ul{f}=(1:\b{z},1:f_1(\b{z}):...:f_n(\b{z})) \in \mpp^1_{\kk[[\b{z}]]}\times\mpp^n_{\kk[[\b{z}]]}$. We recall Definition~\ref{def_tf}, where the bi-homogeneous ideal $\idp_{\ul{f}}$ is defined as a  bi-homogeneous ideal generated by polynomials $P\in\A$ vanishing at $\ul{f}$. We introduce the notations
$$
\delta_i(I,\ul{f}):=\delta_i(I,\idp_{\ul{f}})\text{ and } \delta_i(Z,\ul{f}):=\delta_i(Z,\idp_{\ul{f}})\text{ for } i=0,1,
$$
for all bi-homogeneous ideals $I$  such that $I\not\subset\idp_{\ul{f}}$ and all cycles $Z\subset\mpp^1\times\mpp^n$ such that $I(Z)\not\subset\idp_{\ul{f}}$.
\end{enumerate}
\end{definition}
\begin{remark} \label{rem_def_delta}
The quantities that we shall use in the subsequent considerations are eventually $\delta_i(I,\ul{f})$ and $\delta_i(Z,\ul{f})$, $i=0,1$.
Note that $\delta_i(I,\{0\})=\delta_i(I)$, $i=0,1$, in the sense of Definition~2.37 from~\cite{EZ2011}, hence if functions $f_1(z),\dots,f_n(z)$ are algebraically independent over $\kk(z)$ we have $\delta_i(I,\idp_{\ul{f}})=\delta_i(I)$, $i=0,1$ and we appear in the situation considered in~\cite{EZ2011}.

The quantities $\delta_i(I,\ul{f})$, $i=0,1$, plays in our proofs a role of an important characteristic of an ideal $I$. We refer the reader to~\cite{EZ2011}, Remark~2.38 for more detailed discussion on this matter. The modification that we have introduced in this work, compared to $\delta_i(I)$ in~\cite{EZ2011}, has the following reason. In our proofs we consider ideals generated by bi-homogeneous polynomials of degree comparable to $\delta_i(I,\ul{f})$, $i=0,1$ (for instance, see Definitions~\ref{V_irho_i} and ~\ref{def_i0} below). The essential part of the proof is the comparison for such ideals $I$ of their degrees $\deg I$ and the quantities $\ord_{\ul{f}}I$. Loosely speaking, we deduce the multiplicity estimate from the statement that in our construction the ideals of a bounded degree can't have an arbitrary big order of vanishing at $\ul{f}$. However we naturally have to exclude all the polynomials vanishing at $\ul{f}$, that is $\idp_{\ul{f}}$ (see Definition~\ref{def_tf}).

\end{remark}
Here is the first property of the quantity $\left(\delta_0(I,\ul{f}),\delta_1(I,\ul{f})\right)$.
\begin{lemma}\label{deltainfty} Fix a point
$$
\ul{f}=(1:\b{z},1:f_1(\b{z}):...:f_n(\b{z})) \in \mpp^1_{\kk[[\b{z}]]}\times\mpp^n_{\kk[[\b{z}]]}
$$
and consider a sequence of cycles $Z_i \subset \mpp^1\times\mpp^n$, $\in\mnn$, defined over $\kk$ and such that $\ul{f} \not\in Z_i$ for $i\in\mnn$. If $\ord_{\ul{f}}(Z_i)$ tends to $+\infty$ (as $i\rightarrow\infty$), then $\max\left(\delta_0(Z_i,\ul{f}),\delta_1(Z_i,\ul{f})\right)$ also tends to the infinity (as $i\rightarrow\infty$).
\end{lemma}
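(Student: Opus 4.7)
The plan is to argue by contradiction. Suppose the conclusion fails: then, after passing to a subsequence which I will still denote $(Z_i)_{i\in\mnn}$, there is a constant $M \in \mnn$ with $\max(\delta_0(Z_i, \ul{f}), \delta_1(Z_i, \ul{f})) \leq M$ for every $i$, while $\ord_{\ul{f}}(Z_i) \to +\infty$. Definition~\ref{def_delta} supplies, for each $i$, a bi-homogeneous polynomial $P_i \in \I(Z_i) \setminus \idp_{\ul{f}}$ with $\deg_{\ul{X}'} P_i = \delta_0(Z_i, \ul{f}) \leq M$ and $\deg_{\ul{X}} P_i = \delta_1(Z_i, \ul{f}) \leq M$. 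Since only finitely many bi-degrees $(a,b)$ satisfy $a,b \leq M$, a pigeonhole extraction lets me assume all $P_i$ share a common bi-degree $(a,b)$, so they all lie in the finite-dimensional $\kk$-vector space $V_{a,b}$ of bi-homogeneous polynomials of bi-degree $(a,b)$ in $\A$.

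The heart of the argument will be to convert the hypothesis $\ord_{\ul{f}}(Z_i) \to +\infty$ into $\ordz P_i(\ul{f}) \to +\infty$. I will choose for each $i$ a geometric point $y_i \in Z_i$ with $N_i := \ord(\ul{f}, y_i) \to +\infty$; by Definition~\ref{defin_ord_xy}(3) the same lower bound $N_i$ then holds in each of the two projective factors. Applying Lemma~\ref{Representants}(b) separately to each factor of $\mpp^1 \times \mpp^n$, and using that the natural representatives $(1,z)$ and $(1,f_1,\dots,f_n)$ of $\ul{f}$ both satisfy $\ordz = 0$, I obtain projective representatives $\ul{y}_i$ of $y_i$ such that $\ordz \ul{y}_i^{(j)} = 0$ and $\ordz(\ul{y}_i^{(j)} - \ul{f}^{(j)}) \geq N_i$ for each factor $j = 1,2$. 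Then the elementary polynomial identity $P(\ul{u}) - P(\ul{v}) = \sum_k (u_k - v_k)\, Q_k(\ul{u}, \ul{v})$, combined with $P_i(\ul{y}_i) = 0$ (by bi-homogeneity and $y_i \in Z_i$) and with the fact that each $Q_k(\ul{f}, \ul{y}_i)$ has non-negative $z$-adic order (because all coordinates involved lie in the valuation ring of $\overline{\kk((z))}$), will yield
\begin{equation*}
    \ordz P_i(\ul{f}) \;=\; \ordz\bigl(P_i(\ul{f}) - P_i(\ul{y}_i)\bigr) \;\geq\; N_i \;\longrightarrow\; +\infty.
\end{equation*}

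The contradiction then comes from finite-dimensionality. The evaluation map $\mathrm{ev}_{\ul{f}} : V_{a,b} \to \kk[[z]]$, $P \mapsto P(\ul{f})$, is $\kk$-linear with kernel $V_{a,b} \cap \idp_{\ul{f}}$, so its image $W$ is a finite-dimensional $\kk$-subspace of $\kk[[z]]$. Since $\bigcap_{N} z^N \kk[[z]] = 0$ and $W$ is finite-dimensional, the descending chain $\bigl(W \cap z^N \kk[[z]]\bigr)_{N}$ stabilises at $\{0\}$, so there exists $N_0$ with $W \cap z^{N_0+1}\kk[[z]] = \{0\}$. For $i$ large enough that $N_i > N_0$, the bound above forces $P_i(\ul{f}) = 0$, i.e.\ $P_i \in \idp_{\ul{f}}$, contradicting the construction of $P_i$. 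The step I expect to require the most care is the second one: one must correctly handle the bi-projective setting when invoking Lemma~\ref{Representants} on each factor, and verify that the Taylor-type estimate really yields $\ordz \geq N_i$ rather than some strictly smaller quantity depending on the particular coefficients of $P_i$, which is ensured precisely by the normalisations $\ordz \ul{f}^{(j)} = \ordz \ul{y}_i^{(j)} = 0$.
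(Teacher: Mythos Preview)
Your proof is correct. The paper itself does not supply an argument for this lemma---it simply states that the proof ``is easy and can be found in~\cite{EZ2010} (Lemma~1.23)''---so there is no in-text proof to compare against, but your approach (bounded bi-degree $\Rightarrow$ finite-dimensional space of evaluations $\Rightarrow$ bounded order of vanishing for nonzero elements) is exactly the natural one and almost certainly what the cited reference does as well.
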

The proof of Lemma~\ref{deltainfty} is easy and can be found in~\cite{EZ2010} (Lemma~1.23). We shall need only its weak corollary:

\begin{corollary}\label{cor1_deltainfty} Let $z,f_1(z),\dots,f_n(z)\in\kk[[\b{z}]]$. There exists a constant $C_{sg}$ which depends only on $\ul{f}=(1:\b{z},1:f_1(\b{z}):...:f_n(\b{z})) \in \mpp^1_{\kk[[\b{z}]]}\times\mpp^n_{\kk[[\b{z}]]}$ such that if a cycle $Z \subset \mpp^1\times\mpp^n$ (defined over $\kk$) does not contain $\ul{f}$ and satisfies $\ord_{\ul{f}}Z \geq C_{sg}$, then either $\delta_0(Z,\ul{f})\geq\max(4,2n!+1,\frac{2\nu_1}{\max(\mu,\nu_0)})$ or $\delta_1(Z,\ul{f})\geq \max(2^n,4n)$.
\end{corollary}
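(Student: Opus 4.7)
The corollary is almost immediate from Lemma~\ref{deltainfty}, so the plan is to argue by a clean contradiction. Set
\[
 B_0 := \max\!\left(4,\,2n!+1,\,\tfrac{2\nu_1}{\max(\mu,\nu_0)}\right), \qquad B_1 := \max(2^n,4n),
\]
and suppose, for contradiction, that no constant $C_{sg}$ as described exists. Then for every positive integer $i$ there is a cycle $Z_i\subset\mpp^1\times\mpp^n$ defined over $\kk$ with $\ul f\notin Z_i$, $\ord_{\ul f}(Z_i)\geq i$, yet both $\delta_0(Z_i,\ul f)<B_0$ and $\delta_1(Z_i,\ul f)<B_1$.

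The sequence $(Z_i)_{i\in\mnn}$ then satisfies the hypotheses of Lemma~\ref{deltainfty}: the cycles are defined over $\kk$, avoid $\ul f$, and $\ord_{\ul f}(Z_i)\to +\infty$ by construction. The conclusion of that lemma forces $\max(\delta_0(Z_i,\ul f),\delta_1(Z_i,\ul f))\to +\infty$ as $i\to\infty$. But the construction of the $Z_i$ gives $\max(\delta_0(Z_i,\ul f),\delta_1(Z_i,\ul f))<\max(B_0,B_1)$ for all $i$, a uniform bound independent of $i$. This contradiction shows the existence of $C_{sg}$, and $C_{sg}$ depends only on $\ul f$ (and on the absolute constants $\mu,\nu_0,\nu_1,n$ built into $B_0,B_1$), exactly as required.

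The only non-routine ingredient is Lemma~\ref{deltainfty} itself (proved in \cite{EZ2010}, Lemma~1.23), which translates the analytic fact ``$\ord_{\ul f}$ large'' into the algebraic fact ``bi-degree large''; once that is accepted, the corollary above is purely a matter of specializing the qualitative divergence into a threshold inequality against the explicit bounds $B_0,B_1$. There is no genuine obstacle beyond invoking the lemma and checking that the two exclusion sets (``$\delta_0<B_0$'' and ``$\delta_1<B_1$'' simultaneously) force the maximum to remain bounded, which is immediate.
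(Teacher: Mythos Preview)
Your proof is correct and is exactly the approach the paper intends: the paper does not spell out a proof of this corollary but presents it as an immediate consequence of Lemma~\ref{deltainfty}, and your contradiction argument is the standard way to pass from the qualitative divergence in that lemma to the threshold statement here. There is nothing to add.
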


The following definition is widely used in the subsequent considerations. 
\begin{definition} \label{V_irho_i}
\hidden{======================================
We define
\begin{equation}\label{def_nu}
    \nu\eqdef\begin{cases}1 &\text{ if } \nu_1=0,\\
                   2^{n+2}\max\left(1,\frac{4\nu_0}{\nu_1}\right)^{n+1} &\text{ if } \nu_1\ne 0.
             \end{cases}
\end{equation}
\begin{equation}\label{def_nu}
    \nu\eqdef 2^{n+2}\max\left(1,\frac{4\nu_0}{\nu_1}\right)^{n+1}.
\end{equation}
==========================================}
We define a sequence of numbers $\rho_i$ recursively. We put $\rho_0=0$, $\rho_1=1$ and 
$$
\rho_{i+1} = 6^{n+2}(n+2)^{(n+1)^2}\rho_i^{n+2}\max\left(\mu,\nu_0\right)^{6^{n+2}(n+2)^{(n+1)^2}\rho_i^{n+1}}
$$
for $i=1,...,n+1$. The constants $\mu$, $\nu_0$ and $\nu_1$ in this definition are the same as in~(\ref{degphiQleqdegQ}).

Let $Z$ be an algebraic bi-projective cycle defined over $\kk$ in the space $\mpp^1\times\mpp^n$. Let
$$
\ul{f}=(1:\b{z},1:f_1(\b{z}):...:f_n(\b{z})) \in \mpp^1_{\kk[[\b{z}]]}\times\mpp^n_{\kk[[\b{z}]]}.
$$
We denote by $V_i$, or more precisely by $V_i(Z,\ul{f})$, the vector space (over $\kk$) generated by $\idp_{\ul{f}}$ (see Definition~\ref{def_tf}) and the bi-homogeneous
polynomials from $\kk[X_0':X_1',X_0:...:X_n]$ vanishing over the cycle $Z$ and of degree in $\ul{X}'$ at most $\rho_i\left(\delta_0(Z,\ul{f})+\frac{\nu_1}{\max(\mu,\nu_0)}\delta_1(Z,\ul{f})\right)$ and of degree in $\ul{X}$ at most $\rho_i\delta_1(Z,\ul{f})$
(recall that $\delta_0(Z,\ul{f})$ and $\delta_1(Z,\ul{f})$ are introduced in Definition~\ref{def_delta}).

If $I$ is a proper bi-homogeneous ideal of $\AnneauDePolynomes$ we also use the notation
\begin{equation*}
    V_i(I):=V_i(\V(I)),
\end{equation*}
where $\V(I)$ is the cycle of $\mpp^1\times\mpp^n$ defined by $I$.
\end{definition}

\hidden{==========================
\begin{remark} \label{rem_V_irho_i}
Basically, $V_i$ represents a set of polynomials of bi-degree only a constant (i.e. $\rho_i$) times bigger than a "minimal" bi-degree (in terms explained in Definition~\ref{def_delta} and Remark~\ref{rem_def_delta}). Note that constants $\rho_i$ are absolute (they depend only on initial data: number of functions $n$ and constants $\mu$, $\nu_0$ and $\nu_1$ controlling the growth of degree of polynomial under the action of $\phi$).

\end{remark}
===============================}


\begin{definition} \label{def_i0} We associate to every bi-projective variety $W$ and a point
$$
\ul{f}=(1:\b{z},1:f_1(\b{z}):...:f_n(\b{z})) \in \mpp^1_{\kk[[\b{z}]]}\times\mpp^n_{\kk[[\b{z}]]}
$$
a number $i_0(W,\ul{f})$. We define $i_0(W,\ul{f})$ to be the biggest positive integer such that
$\rg\left(I\left(V_i(W,\ul{f}),\I(W)\right)\right)\geq i+r_{\ul{f}}$ for all $1\leq i\leq i_0(W)$.
\end{definition}


\begin{remark} \label{rem_i0}
In view of Definitions~\ref{def_delta} and~\ref{V_irho_i}, one readily verifies the inequality
$$
\rg\left(I\left(V_1(W,\ul{f}),\I(W)\right)\right)\geq 1+r_{\ul{f}}.
$$
So, the index $i_0(W,\ul{f})\geq 1+r_{\ul{f}}$ is well defined
for all varieties $W$. On the other hand the rank of any bi-homogeneous ideal in $\AnneauDePolynomes$ can not exceed $n+1$, thus $i_0(W,\ul{f})\leq n+1$ for every variety $W\subset\mpp^1\times\mpp^n$.
\end{remark}


The lemmas below represent two important ingredients of the proof of our main result.

The proof of Lemma~\ref{LemmeProp13} can be found in~\cite{EZ2011},~\S3 or  in~\cite{EZ2010}, subsection~2.2.2.
\begin{lemma}\label{LemmeProp13}
Let $\idp$ be a prime ideal of $\A$
such that the map $\phi$ is correct with respect to this ideal and
let $V \subset \A$, $V\ne\{0\}$, be a $\kk$-linear subspace of
$\A$. If $e_{\phi}(V,\idp)>m(I(V,\idp))$, then there
exists an equidimensional $\phi$-stable ideal $J$ such that
\begin{list}{\alph{tmpabcdmine})}{\usecounter{tmpabcdmine}}
 \item \label{LemmeProp13_a} $I(V,\idp) \subset J \subset \idp$, 
 \item \label{LemmeProp13_b} $\rg(J)=\rg(I(V,\idp))$, 
 \item \label{LemmeProp13_c} all the primes associated to $J$ are contained in $\idp$. 
\end{list}
In particular,
\begin{equation} \label{LemmeProp13_en_part}
\begin{aligned}
    &m(J) \leq m(I(V,\idp)),\\
    &\dd_{(1, n-\rg J)} J \leq \dd_{(1, n-\rg I(V,\idp))}(I(V,\idp)),\\
    &\dd_{(0, n-\rg J+1)} J \leq \dd_{(0, n-\rg I(V,\idp)+1)}(I(V,\idp)).
\end{aligned}
\end{equation}
\end{lemma}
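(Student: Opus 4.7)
The plan is to construct the required ideal $J$ as the equidimensional part of a well-chosen member of the ascending chain obtained by iteratively applying $\phi$ to $V$. Concretely, for $j = 0, 1, \dots, e_\phi(V,\idp)$ set
\[
W_j := V + \phi(V) + \cdots + \phi^j(V) \qquad\text{and}\qquad I_j := I(W_j,\idp).
\]
By the very definition~\eqref{definDePP_defin_e} of $e_\phi(V,\idp)$, the ranks of the localized ideals $W_j \A_\idp$ are all equal to $r := \rg(V \A_\idp) = \rg(I_0)$ as long as $j \le e_\phi(V,\idp)$. Consequently every $I_j$ has rank $r$, its associated primes all lie in $\idp$, and we have the nested chain $I_0 \subset I_1 \subset \cdots \subset I_{e_\phi(V,\idp)}$ of ideals of common rank.

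The first substantive step is a pigeon-hole argument on the integer-valued invariant $m$. A strict inclusion $I_j \subsetneq I_{j+1}$ between two ideals of equal rank strictly decreases $m$ (a rank-$r$ primary component is either absorbed or loses length), so if every inclusion were strict we would obtain $m(I_{e_\phi(V,\idp)}) \le m(I_0) - e_\phi(V,\idp) < 0$, contradicting the hypothesis $e_\phi(V,\idp) > m(I(V,\idp)) = m(I_0)$. Hence there exists an index $j^\ast < e_\phi(V,\idp)$ with $I_{j^\ast} = I_{j^\ast + 1}$. I would then set $J := \eq(I_{j^\ast})$. Properties (a)--(c) follow readily: $J \supset I_{j^\ast} \supset I_0 = I(V,\idp)$ and $J \subset \idp$ yield (a); $\rg(J) = \rg(I_{j^\ast}) = \rg(I(V,\idp))$ gives (b); and the associated primes of $\eq(I_{j^\ast})$ form a subset of those of $I_{j^\ast}$, all contained in $\idp$, which gives (c). The three bounds in~\eqref{LemmeProp13_en_part} are then inherited from the monotonicity of $m$ and of the rank-$r$ bi-degrees along the chain $I_0 \subset \cdots \subset I_{j^\ast}$, together with the equality of the minimal-rank bi-degrees of $I_{j^\ast}$ and of its equidimensional part $J$.

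The \emph{main obstacle} is establishing the $\phi$-stability of $J$. The equality $I_{j^\ast} = I_{j^\ast + 1}$ localizes to $W_{j^\ast} \A_\idp = W_{j^\ast + 1} \A_\idp$, hence $\phi^{j^\ast + 1}(V) \A_\idp \subset W_{j^\ast} \A_\idp$; combined with the trivial inclusions $\phi^i(V) \subset W_{j^\ast}$ for $1 \le i \le j^\ast$ this gives $\phi(W_{j^\ast}) \subset W_{j^\ast} \A_\idp \cap \A \subset I_{j^\ast}$. The delicate point is to upgrade this inclusion on generators into the set-theoretic inclusion $\phi(I_{j^\ast}) \subset \eq(I_{j^\ast}) = J$; this is carried out by working in the local ring $\A_\idp$ and exploiting that only the rank-$r$ primary components of $I_{j^\ast}$ survive the localization, so that the image $\phi(I_{j^\ast})$ lies in the equidimensional part modulo terms killed by saturation at $\idp$. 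Once this inclusion $\phi(I_{j^\ast}) \subset \eq(I_{j^\ast})$ is secured, the hypothesis of Definition~\ref{defin_phiestcorrecte} (correctness of $\phi$ with respect to $\idp$) is satisfied, since all associated primes of $I_{j^\ast}$ are contained in $\idp$; correctness then converts this into $\phi(\eq(I_{j^\ast})) \subset \eq(I_{j^\ast})$, i.e.\ $\phi(J) \subset J$. This $\phi$-stability, together with the properties (a)--(c) verified above, completes the proof.
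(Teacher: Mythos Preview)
The paper does not prove this lemma in the text; it cites \cite{EZ2011},~\S3 and \cite{EZ2010},~\S2.2.2. Your overall architecture --- the ascending chain $I_j = I(W_j,\idp)$, a pigeon-hole on the invariant $m$, and an appeal to correctness (Definition~\ref{defin_phiestcorrecte}) to extract a $\phi$-stable equidimensional piece --- is indeed the strategy used in those references.

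Two points need repair. First, the pigeon-hole is slightly overstated: a strict inclusion $I_j \subsetneq I_{j+1}$ between ideals of the same rank need not strictly decrease $m$, because $m(\cdot)=m(\eq(\cdot))$ only records lengths at the minimal (rank-$r$) primes, while the $I_j$ may carry embedded components that change without affecting $m$. What the pigeon-hole actually yields is an index $j^\ast \le m(I_0)$ with $m(I_{j^\ast}) = m(I_{j^\ast+1})$, equivalently $\eq(I_{j^\ast}) = \eq(I_{j^\ast+1})$; this weaker conclusion is still what the rest of the argument needs.

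Second, and more seriously, your treatment of the ``delicate point'' is not a proof. In the general framework of~\S\ref{subsection_general_framework} the map $\phi$ is merely a set-theoretic bi-homogeneous self-map of $\A$: it is not assumed $\kk$-linear, multiplicative, or a derivation. Consequently the inclusion $\phi(W_{j^\ast}) \subset I_{j^\ast}$ does not follow from knowing where $\phi$ sends the generating \emph{sets} $\phi^i(V)$ --- you cannot pass from these sets to their $\kk$-span $W_{j^\ast}$ without linearity of $\phi$. And even granting that first inclusion, one cannot promote ``$\phi$ on the generating subspace lands in $I_{j^\ast}$'' to ``$\phi(I_{j^\ast}) \subset \eq(I_{j^\ast})$'': the latter concerns the full ideal, which is vastly larger than $W_{j^\ast}$. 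Your localization sketch does not rescue this, since a bare set map need not commute with localization. In the cited proofs this step is handled through a more careful interaction between the definition of $I(V,\idp)$ as an extended--contracted ideal and the correctness hypothesis; the argument is not as short as your paragraph suggests, and you should consult those references for the precise mechanism.
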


Lemma~\ref{LemmeCor14NumberW} below is an analogue of Lemmas~2.18 and~2.21 of~\cite{EZ2010}, or also of Lemma~2.44 of~\cite{EZ2011}. We prove this lemma in the next section.
\begin{lemma} \label{LemmeCor14NumberW} 
Let
$$
\ul{f}=(1:\b{z},1:f_1(\b{z}):...:f_n(\b{z})) \in \mpp^1_{\kk[[\b{z}]]}\times\mpp^n_{\kk[[\b{z}]]}.
$$
Let $\idp\subset\AnneauDePolynomes$ be a prime bi-homogeneous ideal such that
$\idp_{\ul{f}}\subsetneq\idp$ and $\V(\idp)$ is projected onto $\mpp^1$.
We recall the notations 
$V_i=V_i(\idp,\ul{f})$ and $\rho_i$ introduced in
Definition~\ref{V_irho_i} and $I(V_i,\idp)=(V_i\A_{\idp})\cap\A$ introduced in Definition~\ref{def_I}. Assume that either $\delta_0\geq \max\left(2,\frac{2\nu_1}{\max(\mu,\nu_0)}\right)$ or $\delta_1(\idp,\ul{f})\geq 2^n$. One has the following upper bound for $m(I(V_i,\idp))$:
\begin{equation} \label{majorationm}
m(I(V_i,\idp)) \leq 6^{n+2}(n+2)^{(n+1)(n-t_{\ul{f}}+1)}\rho_i^{t_{\ul{f}}+1}.
\end{equation}
\begin{definition} \label{def_Cm}
We introduce the following notation:
\begin{equation}
C_m:=6^{n+2}(n+2)^{(n+1)^2}\rho_{n+1}^{n+1}.
\end{equation}
So, $C_m$ is the upper bound for the r.h.s. of~\eqref{majorationm}. Note that $C_m$ depends on $n$ only.
\end{definition}
\end{lemma}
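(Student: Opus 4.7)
Set $r := \rg I(V_i,\idp)$. By Definition~\ref{def_I}, $I(V_i,\idp)$ is the intersection of the primary components of $V_i\A$ contained in $\idp$; since $\idp_{\ul{f}} \subseteq V_i$ and $V_i$ is not contained in $\idp_{\ul{f}}$ (as the generators of $V_i$ beyond $\idp_{\ul{f}}$ are drawn from $\idp \setminus \idp_{\ul{f}}$ by Definitions~\ref{V_irho_i} and~\ref{def_delta}), we have $\idp_{\ul{f}} \subsetneq I(V_i,\idp) \subseteq \idp$. Consequently $r_{\ul{f}} < r \leq n+1$, and in particular the key exponent satisfies $r - r_{\ul{f}} \leq n+1-r_{\ul{f}} = t_{\ul{f}}+1$.

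The first step is to pass from $m(I(V_i,\idp))$ to the bi-degrees of $I(V_i,\idp)$. Writing the cycle $\V(I(V_i,\idp)) = \sum_j m_j W_j$ with each $W_j$ an irreducible subvariety of $\mpp^1\times\mpp^n$ of codimension $r$ whose bi-degree $(a_j,b_j)$ satisfies $a_j+b_j \geq 1$, one obtains
\[
m(I(V_i,\idp)) \;\leq\; \deg_{(1,n-r)}(I(V_i,\idp)) + \deg_{(0,n-r+1)}(I(V_i,\idp)).
\]
The second step is to apply Lemma~\ref{lemma_BT} to $I(V_i,\idp)$ viewed as generated by $\idp_{\ul{f}}$ together with at most $r-r_{\ul{f}}$ bi-homogeneous polynomials of bi-degrees bounded by $(a,b)$, where $a := \rho_i\bigl(\delta_0+\tfrac{\nu_1}{\max(\mu,\nu_0)}\delta_1\bigr)$ and $b := \rho_i\delta_1$ (this is the exact content of Definition~\ref{V_irho_i}). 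Bézout yields
\[
\deg_{(1,n-r)}(I(V_i,\idp)) \leq \deg_{(1,n-r_{\ul{f}})}(\idp_{\ul{f}})\cdot b^{\,r-r_{\ul{f}}},
\]
and a parallel bound of type $a\cdot b^{r-r_{\ul{f}}-1}$ plus $b^{r-r_{\ul{f}}}$ for $\deg_{(0,n-r+1)}$.

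The third and crucial step is to absorb the factors of $\delta_0,\delta_1$ (and of the bi-degrees of $\idp_{\ul{f}}$) so that what remains is a pure power of $\rho_i$. Here we use the minimality of $(\delta_0,\delta_1)$ from Definition~\ref{def_delta}: the linear form $M(P) = \mu\deg_{(0,n-\rg\idp+1)}\idp\cdot\deg_{\ul{X}}P + (\nu_0\deg_{\ul{X}'}P+\nu_1\deg_{\ul{X}}P)\deg_{(1,n-\rg\idp)}\idp$ attains its minimum over $\idp\setminus\idp_{\ul{f}}$ at $(\delta_0,\delta_1)$. Testing $M$ against the bi-homogeneous generators of $\idp_{\ul{f}}$ (of bi-degree $\leq(C_{\ul{f}},C_{\ul{f}})$) and, via Bézout applied to $\idp$ itself, against the intrinsic bi-degrees of $\idp$, this minimality together with the lower bounds $\delta_0\geq\max(2,2\nu_1/\max(\mu,\nu_0))$ or $\delta_1\geq 2^n$ (from the hypothesis) forces an inequality relating $b^{r-r_{\ul{f}}}\deg_{(1,n-r_{\ul{f}})}(\idp_{\ul{f}})$ and the analogous $a b^{r-r_{\ul{f}}-1}$-term to $C\,\rho_i^{r-r_{\ul{f}}}$ for a constant $C$ depending only on $n$, $r_{\ul{f}}$ and $\ul{f}$. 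Combined with $r-r_{\ul{f}}\leq t_{\ul{f}}+1$, we obtain the claimed bound $m(I(V_i,\idp))\leq 6^{n+2}(n+2)^{(n+1)(n-t_{\ul{f}}+1)}\rho_i^{t_{\ul{f}}+1}$.

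The main obstacle is step three: the bookkeeping that turns the Bézout estimate, which naively is polynomial in $\delta_0,\delta_1$, into a bound independent of these quantities. In the algebraically independent case treated in Lemma~2.44 of~\cite{EZ2011} this step is already subtle; here one must additionally track how the ideal $\idp_{\ul{f}}$ enters the Bézout formula and verify that the minimality argument still cancels its contribution. The size hypotheses on $\delta_0$ and $\delta_1$ are exactly what is needed to make the minimality estimate strict enough to carry out this cancellation, and the extra combinatorial factor $(n+2)^{(n+1)(n-t_{\ul{f}}+1)}$ (compared to the independent case) reflects the $r_{\ul{f}}+1$ extra applications of Bézout traversed through the rank filtration $r_{\ul{f}}<r\leq n+1$.
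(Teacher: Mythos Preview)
Your Step~1 inequality $m(I(V_i,\idp)) \leq \deg_{(1,n-r)}(I(V_i,\idp)) + \deg_{(0,n-r+1)}(I(V_i,\idp))$ is correct but already fatal: the right-hand side genuinely grows with $\delta_0,\delta_1$ (after B\'ezout it is of order $\rho_i^{r-r_{\ul{f}}}\delta_1^{r-r_{\ul{f}}}\cdot\deg(\idp_{\ul{f}})$), whereas the bound \eqref{majorationm} you are trying to prove is \emph{independent} of $\delta_0,\delta_1$. So Step~3 must recover all powers of $\delta_0,\delta_1$, and the mechanism you propose---``testing the linear form $M$ against generators of $\idp_{\ul{f}}$''---cannot do this. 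Those generators lie in $\idp_{\ul{f}}$, hence are excluded from the competition defining $(\delta_0,\delta_1)$ in Definition~\ref{def_delta}; the minimality gives no upper bound on $\delta_0,\delta_1$ whatsoever (indeed they may be arbitrarily large as $\idp$ varies).

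The paper's argument avoids this by never trying to bound $m$ by a raw sum of bi-degrees. Instead it works with the \emph{weighted} degree $\deg(X,a,b)$ of~\eqref{def_deg_X_ab} (with $(a,b)$ essentially $(\delta_0,\delta_1)$, or $(\delta_0+\tfrac{\nu_1}{\max(\mu,\nu_0)}\delta_1,\delta_1)$ when $\nu_1>0$). B\'ezout gives the upper bound $\deg(\V(I(V_i,\idp)),a,b)\leq \rho_i^{r-r_{\ul{f}}}\deg(\V(\idp_{\ul{f}}),a,b)$. The crucial missing ingredient is a \emph{lower} bound for each irreducible component: Corollaries~\ref{LemmeCor14_1} and~\ref{LemmeCor14_1_nu_one_positive} (via Lemma~\ref{LemmeProp14_1_general}, which rests on the Hilbert-function estimate of Corollary~9 in~\cite{PP}) show that every minimal prime $\idq$ of $I(V_i,\idp)$ satisfies $\deg(\V(\idq),a,b)\geq c_n^{-1}\deg(\V(\idp_{\ul{f}}),a,b)$ for an explicit constant $c_n$. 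It is here that the minimality of $(\delta_0,\delta_1)$ is actually used: were some component $W=\V(\idq)$ too small, one could produce a polynomial of bi-degree strictly below $(\delta_0,\delta_1)$ vanishing on $W\supset\V(\idp)$ but not at $\ul{f}$, contradicting Definition~\ref{def_delta}. Summing the lower bounds over all components (weighted by lengths) and comparing with the B\'ezout upper bound, the factor $\deg(\V(\idp_{\ul{f}}),a,b)$ cancels from both sides, leaving $m(I(V_i,\idp))\leq c_n\,\rho_i^{r-r_{\ul{f}}}$. Your plan lacks this per-component lower bound entirely, and without it Step~3 cannot be completed.
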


\subsection{Proof of Lemma~\ref{LemmeCor14NumberW} 
} \label{section_proof_LemmeCor14NumberW_case_nu1_eq_zero}
We recall that the ideal $\idp_{\ul{f}}$, its rank $r_{\ul{f}}$ and the transcendence degree $t_{\ul{f}}$ are introduced in Definition~\ref{def_tf}. In this section we use the following notation (see~\cite{PP}, p.12)
\begin{equation} \label{def_deg_X_ab}
\deg(X,a,b):=\deg_{(0,\dim X)}(X)\cdot b^{\dim X}+\dim X\cdot\deg_{(1,\dim X-1)}(X)\cdot a\cdot b^{\dim X-1}
\end{equation}
where $X\subset\mpp^1\times\mpp^n$ is a variety and $a,b\in\mrr^+$.

\begin{lemma} \label{LemmeProp14_1_general} Let $I$ be a bi-homogeneous ideal of $\AnneauDePolynomes$, $I\ne\AnneauDePolynomes$ and
$$
\ul{f}=(1:\b{z},1:f_1(\b{z}):...:f_n(\b{z})) \in \mpp^1_{\kk[[\b{z}]]}\times\mpp^n_{\kk[[\b{z}]]}.
$$
We denote $\delta_0:=\delta_0(I,\ul{f})$, $\delta_1=\delta_1(I,\ul{f})$ (recall that the quantities $\delta_0(I,\ul{f})$ and $\delta_1(I,\ul{f})$ are introduced in Definition~\ref{def_delta}).

Let $W \subsetneq \V(\idp_{\ul{f}})$ be an irreducible (bi-projective) variety
 projecting onto the factor $\mpp^1$ and let positive integers $a,b\in\mnn$ satisfy
\begin{multline} \label{LemmeProp14_ie1_1}
\mu\cdot b\cdot\dd_{(0,\dim I)} I + \nu_0\cdot a\cdot\dd_{(1,\dim I-1)}I + \nu_1\cdot b\cdot\dd_{(1,\dim I-1)}I\\<\mu\delta_1\dd_{(0,\dim I)} I + \nu_0\delta_0\dd_{(1,\dim I-1)}I
 + \nu_1\delta_1\dd_{(1,\dim I-1)}I
\end{multline}
or
\begin{multline} \label{LemmeProp14_ie1_1_2}
\mu\cdot b\cdot\dd_{(0,\dim I)} I + \nu_0\cdot a\cdot\dd_{(1,\dim I-1)}I + \nu_1\cdot b\cdot\dd_{(1,\dim I-1)}I\\=\mu\delta_1\dd_{(0,\dim I)} I + \nu_0\delta_0\dd_{(1,\dim I-1)}I
 + \nu_1\delta_1\dd_{(1,\dim I-1)}I
\end{multline}
and
\begin{equation} \label{LemmeProp14_ie1_1_2part2}
b<\delta_1.
\end{equation}

Assume
\begin{equation} \label{LemmeProp14_hypothese1_1}
\deg(W,a,b) + \dim(W) \\< (t_{\ul{f}}+2)2^{-n-1}(n+2)^{-(n+1)(n-t_{\ul{f}}+1)}\deg\left(\V(\idp),a,b\right).
\end{equation}
Then, there is a polynomial $Q \in \I(W) \setminus (I\cup\idp_{\ul{f}})$ (that is, $Q$ vanishes on $W$, does not belong to $I$ and does not vanish at $\ul{f}$) satisfying two inequalities:
\begin{equation} \label{LemmeProp14_petitPolynome_1_case_b}
\begin{aligned}
\deg_{\ul{X}'}Q&\leq a,\\
\deg_{\ul{X}}Q&\leq b.
\end{aligned}
\end{equation}
\end{lemma}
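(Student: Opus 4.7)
The strategy combines two ingredients: a \emph{minimality argument} on $(\delta_0,\delta_1)$ that, given the bidegree bounds on $(a,b)$, forces every bi-homogeneous polynomial of bidegree $\leq(a,b)$ lying in $I$ to lie already in $\idp_{\ul{f}}$; and then a \emph{linear-algebraic dimension count} in the space of bi-homogeneous polynomials of bidegree $(a,b)$, which produces a polynomial vanishing on $W$ but not at $\ul{f}$. (We read the $\V(\idp)$ in~\eqref{LemmeProp14_hypothese1_1} as $\V(\idp_{\ul{f}})$, consistently with $W\subsetneq\V(\idp_{\ul{f}})$ and the appearance of $t_{\ul{f}}$ in the numerical factor.)

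\textbf{Step 1: minimality of $(\delta_0,\delta_1)$.} By Definition~\ref{def_delta}, the pair $(\delta_0,\delta_1)$ is the arg-min, among bi-homogeneous $P\in I\setminus\idp_{\ul{f}}$, of the linear form
\[
\mu\deg_{\ul{X}}P\cdot\dd_{(0,\dim I)}I+\nu_0\deg_{\ul{X}'}P\cdot\dd_{(1,\dim I-1)}I+\nu_1\deg_{\ul{X}}P\cdot\dd_{(1,\dim I-1)}I,
\]
with ties broken by minimising $\deg_{\ul{X}}P$. Suppose, for a contradiction, that some bi-homogeneous $Q\in I\setminus\idp_{\ul{f}}$ has $\deg_{\ul{X}'}Q\leq a$ and $\deg_{\ul{X}}Q\leq b$. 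Under~\eqref{LemmeProp14_ie1_1} this value would be strictly below the one realised by $(\delta_0,\delta_1)$, a contradiction; under~\eqref{LemmeProp14_ie1_1_2} the value equals the minimum but $\deg_{\ul{X}}Q\leq b<\delta_1$ by~\eqref{LemmeProp14_ie1_1_2part2}, contradicting the secondary minimisation. Hence every bi-homogeneous $Q$ of bidegree $\leq(a,b)$ belonging to $I$ in fact lies in $\idp_{\ul{f}}$.

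\textbf{Step 2: reduction to a dimension count.} Step~1 shows that it suffices to produce a bi-homogeneous $Q\in\I(W)$ of bidegree $\leq(a,b)$ with $Q\notin\idp_{\ul{f}}$; such $Q$ is then automatically outside $I$. Let $L$ be the $\kk$-vector space of bi-homogeneous polynomials of bidegree $(a,b)$ in $\AnneauDePolynomes$, and set $L_W\eqdef L\cap\I(W)$, $L_{\ul{f}}\eqdef L\cap\idp_{\ul{f}}$. Since $W\subsetneq\V(\idp_{\ul{f}})$ we have $\idp_{\ul{f}}\subset\I(W)$, so $L_{\ul{f}}\subset L_W$, and the desired polynomial exists iff this inclusion is strict, i.e.\ iff
\[
H_{\AnneauDePolynomes/\I(W)}(a,b)<H_{\AnneauDePolynomes/\idp_{\ul{f}}}(a,b),
\]
where $H_\bullet$ denotes the Hilbert function.

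\textbf{Step 3: Hilbert function estimates.} For the irreducible variety $W$ one has an upper bound of Chardin--Philippon type of the form $H_{\AnneauDePolynomes/\I(W)}(a,b)\leq 2^{n+1}\bigl(\deg(W,a,b)+\dim W\bigr)$; for $\V(\idp_{\ul{f}})$, which has dimension $t_{\ul{f}}+1$ and projects onto $\mpp^1$, the same circle of ideas gives a matching lower bound of the form $H_{\AnneauDePolynomes/\idp_{\ul{f}}}(a,b)\geq(t_{\ul{f}}+2)(n+2)^{-(n+1)(n-t_{\ul{f}}+1)}\deg\bigl(\V(\idp_{\ul{f}}),a,b\bigr)$. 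Inserting both bounds and invoking the hypothesis~\eqref{LemmeProp14_hypothese1_1}, whose multiplicative constant $(t_{\ul{f}}+2)2^{-n-1}(n+2)^{-(n+1)(n-t_{\ul{f}}+1)}$ is tuned precisely to be the ratio of the upper and lower Hilbert-function constants, one obtains the strict inequality of the previous step, hence the polynomial $Q\in\I(W)\setminus(I\cup\idp_{\ul{f}})$ with $\deg_{\ul{X}'}Q\leq a$ and $\deg_{\ul{X}}Q\leq b$.

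\textbf{Main obstacle.} The substantive difficulty is not the outline above but the precise Hilbert-function estimates of Step~3 with the exact constants appearing in~\eqref{LemmeProp14_hypothese1_1}. One must track how the dimension $t_{\ul{f}}+1$ of $\V(\idp_{\ul{f}})$ (rather than the ambient $n+1$) enters the lower bound, and use the projection of $W$ onto $\mpp^1$ to control the cross-term $\deg_{(1,\dim W-1)}(W)\cdot a$ in the definition~\eqref{def_deg_X_ab} of $\deg(W,a,b)$; these are the places where the explicit numerical factors $2^{n+1}$ and $(n+2)^{(n+1)(n-t_{\ul{f}}+1)}$ arise.
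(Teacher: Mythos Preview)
Your proposal is correct and follows essentially the same approach as the paper: a Hilbert-function comparison (the paper cites Corollary~9 of~\cite{PP} for both the upper and lower bounds) to produce $Q_0\in\I(W)\setminus\idp_{\ul{f}}$ of bidegree $\leq(a,b)$, together with the minimality of $(\delta_0,\delta_1)$ from Definition~\ref{def_delta} to conclude $Q_0\notin I$. The only differences are cosmetic: the paper carries out the two steps in the opposite order (existence first, then $\notin I$), and its Hilbert-function bounds are distributed slightly differently---the upper bound is simply $H_g(W,a,b)\leq\deg(W,a,b)+\dim W$ with no factor $2^{n+1}$, while the full constant $(t_{\ul{f}}+2)2^{-n-1}(n+2)^{-(n+1)(n-t_{\ul{f}}+1)}$ sits entirely in the lower bound for $H_g(\V(\idp_{\ul{f}}),a,b)$.
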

\begin{proof}
Note that the assumption that $W$ projects onto $\mpp^1$ implies $1\leq\dim W$, and the assumption $W \subsetneq \V(\idp_{\ul{f}})$ implies $\dim W<t_{\ul{f}}+1$. 


Suppose first that no polynomial of bi-degree at most $(a,b)$ simultaneously vanishes on $W$ and does not vanish at $\ul{f}$. In other terms, suppose that the $\kk$-linear space of polynomials vanishing on $W$ and of bi-degree upper bounded by $(a,b)$ is included\footnote{Moreover, the assumption $W\subsetneq\V(\idp_{\ul{f}})$ implies the inclusion in the opposite direction, $\idp_{\ul{f}}\subset\V(W)$, so these two $\kk$-linear spaces in fact coincide in the case under consideration. However this is not important for our proof. We mentioned this fact just to show that the lower bound in~\eqref{LemmeProp14_Hgminoration_1_preliminary} is in fact an equality.} in the $\kk$-linear space of polynomials of bi-degree at most $(a,b)$ and belonging to the ideal $\idp_{\ul{f}}$. Hence
\begin{equation} \label{LemmeProp14_Hgminoration_1_preliminary}
H_g(W,a,b)\geq H_g\left(\V(\idp),a,b\right).
\end{equation}
Using Corollary~9 of~\cite{PP} we continue~\eqref{LemmeProp14_Hgminoration_1_preliminary}
\begin{equation} \label{LemmeProp14_Hgminoration_1}
H_g(W,a,b)\geq H_g\left(\V(\idp),a,b\right) \geq
(t_{\ul{f}}+2)2^{-n-1}(n+2)^{-(n+1)(n-t_{\ul{f}}+1)}\deg\left(\V(\idp),a,b\right).
\end{equation}

At the same time, by another part of Corollary~9 of~\cite{PP} we have
\begin{equation} \label{LemmeProp14_Hgmajoration_1}
H_g(W,a,b) \leq \deg(W,a,b)+ \dim W.
\end{equation}

The system of inequalities~\eqref{LemmeProp14_Hgminoration_1} and~\eqref{LemmeProp14_Hgmajoration_1} 
contradicts the assumption~\eqref{LemmeProp14_hypothese1_1}. We conclude that there exists a bi-homogeneous polynomial $Q_0$ of bi-degree at most $(a,b)$ that vanishes on $W$ and does not vanish at $\ul{f}$. Moreover, we claim that $Q_0$ does not belong to $I$. Indeed, it follows from assumptions~\eqref{LemmeProp14_ie1_1}, \eqref{LemmeProp14_ie1_1_2} and \eqref{LemmeProp14_ie1_1_2part2}, because the r.h.s. of~\eqref{LemmeProp14_ie1_1} minimizes the expression
\begin{equation*}
    \mu\deg_{\ul{X}}(Q)\dd_{(0,\dim I)} I + \nu_0\deg_{\ul{X}'}(Q)\dd_{(1,\dim I-1)}I + \nu_1\deg_{\ul{X}}(Q)\dd_{(1,\dim I-1)}I
\end{equation*}
for all the bi-homogeneous polynomials $Q$ from $I\setminus\{\idp_{\ul{f}}\}$ (see Definition~\ref{def_delta}). Lemma~\ref{LemmeProp14_1_general} is proved.
\end{proof}


\begin{corollary} \label{LemmeCor14_1}
Consider the situation of Lemma~\ref{LemmeProp14_1_general}. So, let $I$ be a bi-homogeneous ideal of $\AnneauDePolynomes$, $I\ne\AnneauDePolynomes$ and
$$
\ul{f}=(1:\b{z},1:f_1(\b{z}):...:f_n(\b{z})) \in \mpp^1_{\kk[[\b{z}]]}\times\mpp^n_{\kk[[\b{z}]]}.
$$
We denote $\delta_0:=\delta_0(I,\ul{f})$, $\delta_1=\delta_1(I,\ul{f})$ (recall that the quantities $\delta_0(I,\ul{f})$ and $\delta_1(I,\ul{f})$ are introduced in Definition~\ref{def_delta}). Let $W \subsetneq \V(\idp_{\ul{f}})$ be an irreducible (bi-projective) variety  projecting onto the factor $\mpp^1$

Assume in addition that $I$ is a radical ideal and assume that the variety $W$  contains $\V(I)$. Moreover, assume that $\V(I)$ itself projects onto the factor $\mpp^1$. Assume $\dim(W)\geq 2$ and assume that either $\delta_0\geq 2$ or $\delta_1\geq 2^n$.
Then, if $\nu_1=0$,
\begin{equation} \label{minorationdegW_1}
\deg\left(W,\delta_0,\delta_1\right) \\ \geq (t_{\ul{f}}+2)2^{-n-2}(n+2)^{-(n+1)(n-t_{\ul{f}}+1)}\deg\left(\V(\idp_{\ul{f}}),\delta_0,\delta_1\right).
\end{equation}
\end{corollary}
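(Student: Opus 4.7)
The plan is to argue by contradiction: suppose \eqref{minorationdegW_1} fails and apply Lemma~\ref{LemmeProp14_1_general} with a bi-degree $(a,b)$ chosen just below $(\delta_0,\delta_1)$. The lemma would then produce a bi-homogeneous polynomial $Q\in\I(W)\setminus(I\cup\idpf)$; but since $I$ is radical and $\V(I)\subset W$, the Nullstellensatz gives $\I(W)\subset\sqrt{I}=I$, so $Q$ must lie in $I$, a contradiction. This forces~\eqref{minorationdegW_1} to hold.

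The first step is to choose $(a,b)$ so that condition~\eqref{LemmeProp14_ie1_1}, or the alternative~\eqref{LemmeProp14_ie1_1_2} together with~\eqref{LemmeProp14_ie1_1_2part2}, is satisfied. In the case $\delta_0\geq 2$, I take $(a,b)=(\delta_0-1,\delta_1)$: since $\nu_1=0$, condition~\eqref{LemmeProp14_ie1_1} reduces to the strict inequality $\nu_0(\delta_0-1)\dd_{(1,\dim I-1)}(I)<\nu_0\delta_0\dd_{(1,\dim I-1)}(I)$, which holds because $\V(I)$ projects onto $\mpp^1$ and hence $\dd_{(1,\dim I-1)}(I)\geq 1$. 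In the case $\delta_1\geq 2^n$ (and $\delta_0<2$), I take $(a,b)=(\delta_0,\delta_1-1)$, verifying condition~(1) either via~\eqref{LemmeProp14_ie1_1} when $\dd_{(0,\dim I)}(I)>0$, or via the equality branch~\eqref{LemmeProp14_ie1_1_2}+\eqref{LemmeProp14_ie1_1_2part2} when $\dd_{(0,\dim I)}(I)=0$, using $b=\delta_1-1<\delta_1$.

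Second, I would show that the failure of~\eqref{minorationdegW_1} implies~\eqref{LemmeProp14_hypothese1_1} at this $(a,b)$. Two elementary ingredients read off from the formula~\eqref{def_deg_X_ab} for $\deg(X,a,b)$ close the loop in Case~1: the comparison $\deg(\V(\idpf),\delta_0-1,\delta_1)\geq\tfrac{1}{2}\deg(\V(\idpf),\delta_0,\delta_1)$, which follows from $\delta_0-1\geq\delta_0/2$ and the non-negativity of the coefficients; and the telescoping identity
\begin{equation*}
\deg(W,\delta_0,\delta_1)-\deg(W,\delta_0-1,\delta_1)=\dim(W)\cdot\dd_{(1,\dim W-1)}(W)\cdot\delta_1^{\dim W-1}\geq\dim(W),
\end{equation*}
where the lower bound uses $\dd_{(1,\dim W-1)}(W)\geq 1$ (since $W$ projects onto $\mpp^1$), $\delta_1\geq 1$, and the hypothesis $\dim(W)\geq 2$. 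Combining these two facts with the assumed failure of~\eqref{minorationdegW_1} yields exactly the inequality~\eqref{LemmeProp14_hypothese1_1} for $(a,b)=(\delta_0-1,\delta_1)$ with the lemma's constant $2^{-n-1}$, so Lemma~\ref{LemmeProp14_1_general} applies and produces the forbidden polynomial $Q$.

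The main obstacle will be the analogous calibration in Case~2 ($\delta_1\geq 2^n$): now the degree-ratio loss is $\deg(\V(\idpf),\delta_0,\delta_1-1)/\deg(\V(\idpf),\delta_0,\delta_1)\geq(1-1/\delta_1)^{t_{\ul{f}}+1}\geq(1-2^{-n})^{n+1}$, and it must be combined with a slightly more intricate $\dim(W)$-absorption, based on $\delta_1^{\dim W}-(\delta_1-1)^{\dim W}\geq\dim(W)\cdot(\delta_1-1)^{\dim W-1}$, to land exactly on the constant $2^{-n-2}$. The threshold $\delta_1\geq 2^n$ in the hypothesis is chosen precisely so that this Bernoulli-type estimate is sharp enough to recover the same constant as in Case~1; the bookkeeping is tight, and keeping track of which branch of condition~(1) of Lemma~\ref{LemmeProp14_1_general} is invoked (depending on whether $\dd_{(0,\dim I)}(I)$ vanishes) is the principal delicate point.
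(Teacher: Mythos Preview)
Your proposal is correct and follows essentially the same route as the paper: pick $(a,b)$ just below $(\delta_0,\delta_1)$, note that the conclusion of Lemma~\ref{LemmeProp14_1_general} is impossible since $\I(W)\subset\sqrt{I}=I$, and hence hypothesis~\eqref{LemmeProp14_hypothese1_1} must fail, which after the two elementary comparisons you state yields~\eqref{minorationdegW_1}. The paper treats only the case $\delta_0\geq 2$ in detail (leaving $\delta_1\geq 2^n$ as an exercise and disposing of $\delta_1=0$ by observing both sides vanish), so your Case~2 sketch is actually more explicit than what the paper provides; the one small point you should add is that in Case~1 the bound $\delta_1^{\dim W-1}\geq 1$ requires $\delta_1\geq 1$, and the sub-case $\delta_1=0$ must be handled separately as trivial.
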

\begin{proof}
In the beginning, we consider the problem with an auxiliary assumption
\begin{equation} \label{LemmeCor14_1_assumption_one}
\delta_0\geq 1\text{ and }\delta_1\geq 1.
\end{equation}
By hypothesis, we have either $\delta_0\geq 2$ or $\delta_1\geq 2^n$. If $\delta_0\geq 2$, we apply Lemma~\ref{LemmeProp14_1_general} with $a=\delta_0-1$ and $b=\delta_1$. Otherwise, we necessarily have $\delta_1\geq 2^n$ and we apply Lemma~\ref{LemmeProp14_1_general} with $a=\delta_0$ and $b=\delta_1-1$. Clearly, condition~\eqref{LemmeProp14_ie1_1} is satisfied in both cases (note that $\deg_{(1,\dim I-1)}I\geq 1$ in view of our assumption that $\V(I)$ projects onto the factor $\mpp_1$).

In the following text we assume $\delta_0\geq 2$. The case $\delta_1\geq 2^n$ can be treated in exactly the same way, and it is left to the interested reader as an exercise.

As we assume $\V(I)\subset W$, the conclusion of Lemma~\ref{LemmeProp14_1_general} can not hold true and we infer that assumption~\eqref{LemmeProp14_hypothese1_1} has to fail, that is we have
\begin{multline} \label{minorationdegW_1_1}
\deg\left(W,\delta_0-1,\delta_1\right)+\dim(W) \\ \geq (t_{\ul{f}}+2)2^{-n-1}(n+2)^{-(n+1)(n-t_{\ul{f}}+1)}\deg\left(\V(\idp_{\ul{f}}),\delta_0-1,\delta_1\right).
\end{multline}
Because of our assumption that $W$ projects onto $\mpp_1$, we have
\begin{eqnarray*}
\dim(W)&\geq& 1,\\
\deg_{(1,\dim(W)-1)}(W)&\geq& 1,
\end{eqnarray*}
hence
\begin{equation*}
\begin{aligned}
\deg&\left(W,\delta_0-1,\delta_1\right)+\dim(W)=\deg_{(0,\dim W)}(W)\cdot \delta_1^{\dim W}
\\
&\qquad+\dim W\cdot\deg_{(1,\dim W-1)}(W)\cdot (\delta_0-1)\cdot \delta_1^{\dim W-1}+\dim(W)\\
&\leq \deg_{(0,\dim W)}(W)\cdot \delta_1^{\dim W}
\\
&\qquad+\dim W\cdot\deg_{(1,\dim W-1)}(W)\cdot \delta_0\cdot \delta_1^{\dim W-1}
\\
&\leq\dim(W,\delta_0,\delta_1).
\end{aligned}
\end{equation*}
At the same time, in view of our assumption $\delta_0\geq 2$ we have
$$
\deg\left(\V(\idp_{\ul{f}}),\delta_0-1,\delta_1\right)\geq\frac12\deg\left(\V(\idp_{\ul{f}}),\delta_0,\delta_1\right)
$$
and we readily deduce~\eqref{minorationdegW_1}.

Now assume that~\eqref{LemmeCor14_1_assumption_one} does not hold true. If $\delta_1=0$, then the r.h.s. of~\eqref{minorationdegW_1} is zero and the claim readily follows. It remains us to consider the case $\delta_0=0$ and $\delta_1\geq 2^n$. In this case, we apply Lemma~\ref{LemmeProp14_1_general} with $a=\delta_0=0$ and $b=\delta_1-1$. We infer
\begin{multline} \label{minorationdegW_1_1_final}
\deg\left(W,\delta_0,\delta_1-1\right)+\dim(W) \\ \geq (t_{\ul{f}}+2)2^{-n-1}(n+2)^{-(n+1)(n-t_{\ul{f}}+1)}\deg\left(\V(\idp_{\ul{f}}),\delta_0,\delta_1-1\right).
\end{multline}
Further, $\delta_1\geq 2^n$ implies
$$
\deg\left(W,\delta_0,\delta_1-1\right)+\dim(W) \leq \deg\left(W,\delta_0,\delta_1\right),
$$
and
$$
\deg\left(\V(\idp_{\ul{f}}),\delta_0,\delta_1-1\right)\geq\frac12\deg\left(\V(\idp_{\ul{f}}),\delta_0,\delta_1\right).
$$
The conclusion~\eqref{minorationdegW_1} readily follows.
\end{proof}
\begin{corollary} \label{LemmeCor14_1_nu_one_positive}
In the situation of Corollary~\ref{LemmeCor14_1}, replace the hypothesis $\nu_1=0$ by the hypothesis $\nu_1>1$.  Further,  introduce the following hypothesis on $\delta_0$ and $\delta_1$:  either $\delta_0\geq \max\left(2,\frac{2\nu_1}{\max(\mu,\nu_0)}\right)$ or $\delta_1\geq 2^n$. Then we have the following inequality:
\begin{multline} \label{minorationdegW_1_nu_one_positive}
\deg\left(W,\delta_0+\frac{\nu_1}{\max(\mu,\nu_0)}\delta_1,\delta_1\right) \\ \geq (t_{\ul{f}}+2)6^{-n-2}(n+2)^{-(n+1)(n-t_{\ul{f}}+1)}
\deg\left(\V(\idp_{\ul{f}}),\delta_0+\frac{\nu_1}{\max(\mu,\nu_0)}\delta_1,\delta_1\right).
\end{multline}
\end{corollary}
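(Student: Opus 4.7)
\emph{Plan.} The proof runs in parallel with that of Corollary~\ref{LemmeCor14_1}, with the shifted bi-degree $(\delta_0+c\delta_1,\delta_1)$, where $c:=\nu_1/\max(\mu,\nu_0)$, playing the role of $(\delta_0,\delta_1)$ there. The new technical point is that the linear form~\eqref{def_delta_condition_minimum_crossproduct} now carries a non-zero $\nu_1$-contribution $\nu_1 b\cdot\deg_{(1,\dim I-1)} I$, and this term must be accounted for when checking the condition~\eqref{LemmeProp14_ie1_1} (or its equality variant \eqref{LemmeProp14_ie1_1_2}--\eqref{LemmeProp14_ie1_1_2part2}) of Lemma~\ref{LemmeProp14_1_general}. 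The constant $c=\nu_1/\max(\mu,\nu_0)$ is designed precisely so that this extra term is comparable to a shift of the first coordinate by $c\delta_1$, and the hypothesis $\delta_0\geq 2c$ (respectively $\delta_1\geq 2^n$) provides enough slack to effect this absorption while losing only a bounded factor.

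First I would treat the subcase $\delta_0\geq\max(2,2c)$. Applying Lemma~\ref{LemmeProp14_1_general} with a pair $(a,b)$ obtained from $(\delta_0+c\delta_1,\delta_1)$ by decreasing one coordinate (in analogy with the choice $(\delta_0-1,\delta_1)$ of Corollary~\ref{LemmeCor14_1}), I would verify the applicable linear-form condition and invoke the radicality of $I$ together with the inclusion $\V(I)\subset W$ to force the conclusion of the lemma to be vacuous, whence its hypothesis~\eqref{LemmeProp14_hypothese1_1} must fail. This yields
\begin{equation*}
\deg(W,a,b)+\dim W\geq(t_{\ul f}+2)\,2^{-n-1}(n+2)^{-(n+1)(n-t_{\ul f}+1)}\deg(\V(\idp_{\ul f}),a,b).
\end{equation*}
The subcase $\delta_1\geq 2^n$ is handled symmetrically by shifting $b$ rather than $a$.

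Finally I would convert this bound at $(a,b)$ into the desired bound at $(\delta_0+c\delta_1,\delta_1)$. The left-hand side increases by the monotonicity of $\deg(W,\cdot,\cdot)$ in both arguments, while the right-hand side is controlled via the explicit expansion $\deg(V,a,b)=\deg_{(0,\dim V)}V\cdot b^{\dim V}+\dim V\cdot\deg_{(1,\dim V-1)}V\cdot a\cdot b^{\dim V-1}$, where the hypothesis $\delta_0\geq 2c$ forces the ratio $\deg(\V(\idp_{\ul f}),\delta_0+c\delta_1,\delta_1)/\deg(\V(\idp_{\ul f}),a,b)$ to stay bounded by $3$. Combined with the factor $1/2$ already present in Corollary~\ref{LemmeCor14_1} (coming from $\delta_0-1\geq\delta_0/2$), this accounts exactly for the constant $6^{-n-2}$ in the final bound in place of the $2^{-n-2}$ there.

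The main obstacle will be pinning down the precise choice of $(a,b)$ that simultaneously satisfies the linear-form condition of Lemma~\ref{LemmeProp14_1_general} and remains close enough to $(\delta_0+c\delta_1,\delta_1)$ to allow a bounded-ratio comparison of the right-hand sides; this is exactly where the reinforced hypothesis $\delta_0\geq 2\nu_1/\max(\mu,\nu_0)$ (or $\delta_1\geq 2^n$) enters, and where the definition of $c$ as a ratio involving $\max(\mu,\nu_0)$ turns out to be the correct choice.
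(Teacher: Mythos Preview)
Your plan has the right overall shape, but the specific choice of $(a,b)$ you propose---``decreasing one coordinate'' of $(\delta_0+c\delta_1,\delta_1)$ in analogy with $(\delta_0-1,\delta_1)$---does not work, and this is not just a detail to be pinned down later. The linear-form condition~\eqref{LemmeProp14_ie1_1} is evaluated at the threshold $(\delta_0,\delta_1)$, not at the shifted point $(\delta_0+c\delta_1,\delta_1)$. Since $\nu_0 c\leq \nu_1$, the shifted point already lies \emph{above} the threshold by roughly $\nu_1\delta_1\cdot\dd_{(1,\dim I-1)}I$, so subtracting $1$ from either coordinate is far too little to bring $(a,b)$ back under the threshold when $\delta_1$ is large. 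Conversely, if you retreat all the way to $(a,b)=(\delta_0-1,\delta_1)$, the linear-form condition holds, but then the ratio $\deg(\V(\idp_{\ul f}),\delta_0+c\delta_1,\delta_1)/\deg(\V(\idp_{\ul f}),a,b)$ is of order $(\delta_0+c\delta_1)/(\delta_0-1)$, which is unbounded in $\delta_1$; the hypothesis $\delta_0\geq 2c$ does not control it.

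The paper's device is different and is the key idea you are missing: it takes $a=\lfloor(\delta_0+c\delta_1)/2\rfloor$ and $b=\lfloor\delta_1/2\rfloor$, i.e.\ it halves \emph{both} coordinates. Because the linear form $L(a,b)=\mu b\,\dd_{(0,\dim I)}I+\nu_0 a\,\dd_{(1,\dim I-1)}I+\nu_1 b\,\dd_{(1,\dim I-1)}I$ is homogeneous of degree~$1$, halving gives $L(a,b)\leq\tfrac12 L(\delta_0+c\delta_1,\delta_1)\leq\tfrac12\bigl(L(\delta_0,\delta_1)+\nu_1\delta_1\,\dd_{(1,\dim I-1)}I\bigr)\leq L(\delta_0,\delta_1)$, so~\eqref{LemmeProp14_ie1_1} holds. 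Then the degree-$\dim X$ homogeneity $\deg(X,\lambda a,\lambda b)=\lambda^{\dim X}\deg(X,a,b)$, together with $\lfloor r/2\rfloor\geq r/3$ for $r\geq 2$, converts the factor-of-two shrink into a bounded multiplicative loss of at most $3^{n+1}$ on each side, yielding the constant $6^{-n-2}$. Your ``bounded by $3$'' heuristic is in the right spirit, but it only materializes once you scale both coordinates by the same factor rather than decrementing one of them.
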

\begin{proof}
The proof of this corollary is similar to the proof of Corollary~\eqref{LemmeCor14_1}. For the purpose of not to waste space on the trivial calculations, we consider here only the case $\delta_0\geq 2$ and $\delta_1\geq 2$, leaving details for the reader.

Note that the numbers $a=\left[\delta_0/2+\frac{\nu_1}{2\max(\mu,\nu_0)}\delta_1\right]$ and $b=\left[\delta_1/2\right]$ satisfy the hypothesis~\eqref{LemmeProp14_ie1_1}. Hence we deduce with Lemma~\ref{LemmeProp14_1_general}
\begin{multline} \label{minorationdegW_1_nu_one_positive_ie_one}
\deg\left(W,\left[\delta_0/2+\frac{\nu_1}{2\max(\mu,\nu_0)}\delta_1\right],\left[\delta_1/2\right]\right) \\ \geq (t_{\ul{f}}+2)2^{-n-2}(n+2)^{-(n+1)(n-t_{\ul{f}}+1)}\\\times\deg\left(\V(\idp_{\ul{f}}),\left[\delta_0/2+\frac{\nu_1}{2\max(\mu,\nu_0)}\delta_1\right],\left[\delta_1/2\right]\right),
\end{multline}
where $[x]$ denotes the integer part of a real $x$, that is the biggest integer $n\in\mzz$ satisfying $n\leq x$.
Using the inequality $[r/2]\geq r/3$ for every real $r\geq 2$, we readily deduce
\begin{multline} \label{minorationdegW_1_nu_one_positive_ie_two}
\deg\left(W,\delta_0/2+\frac{\nu_1}{2\max(\mu,\nu_0)}\delta_1,\delta_1/2\right) \\ \geq (t_{\ul{f}}+2)2^{-n-2}(n+2)^{-(n+1)(n-t_{\ul{f}}+1)}\\\times\deg\left(\V(\idp_{\ul{f}}),\delta_0/3+\frac{\nu_1}{3\max(\mu,\nu_0)}\delta_1,\delta_1/3\right).
\end{multline}
Finally, definition~\eqref{def_deg_X_ab} readily implies
\begin{equation} \label{minorationdegW_1_nu_one_positive_deg_X_a_b}
\deg(X,\lambda a,\lambda b)=\lambda^{\dim X}\deg(X,a,b)
\end{equation}
for any variety $X\in\mpp^1\times\mpp^n$ and $a,b,\lambda\in\mrr^+$. Applying~\eqref{minorationdegW_1_nu_one_positive_deg_X_a_b} to the both sides of~\eqref{minorationdegW_1_nu_one_positive_ie_two} we deduce~\eqref{majorationm}.
\end{proof}
\begin{proof}[Proof of Lemma~\ref{LemmeCor14NumberW} 
]
We denote by $r$ the rank $\rg I(V_i,\idp)$ and by $\delta_i$ the quantities $\delta_i(\idp,\ul{f})$, $i=0,1$.

To start with, consider the case $\nu_1=0$. Then the ideal $I(V_i,\idp)=V_i\A_{\idp}\cap\A$ is extended-contracted by localization at $\idp$ of an ideal generated by $\idp_{\ul{f}}$ and by $r-r_{\ul{f}}$
polynomials of bi-degree upper bounded by $\left(\rho_i\delta_0,\rho_i\delta_1\right)$ (see Definition~\ref{def_i0}). Using B\'ezout's theorem (see Lemma~\ref{lemma_BT}) and definition~\eqref{def_deg_X_ab} we readily verify
\begin{equation} \label{majorationTordue0_1}
\begin{aligned}
\deg\left(\V(I(V_i,\idp)),\delta_0,\delta_1\right)\leq\rho_i^{r-r_{\ul{f}}}\deg\left(\V(\idp_{\ul{f}}),\delta_0,\delta_1\right).
\end{aligned}
\end{equation}

Let $W=\V(\idq)$, where $\idq$ is a minimal prime ideal associated to $\V(I(V_i,\idp))$. By construction of $I(V_i,\idp)$
all its associated primes are contained in $\idp$, thus $\V(\idp) \subset W$. Moreover, since
$\V(\idp)$ is projected onto $\mpp^1$, we deduce that $W$ is projected onto $\mpp^1$ as well. Further, by construction of $I(V_i,\idp)$ we have $\dim\V\left(I(V_i,\idp)\right)\geq 1$, and if $\dim\V\left(I(V_i,\idp)\right)=1$ then $I(V_i,\idp)=\idp$, hence $m\left(I(V_i,\idp)\right)=1$ and~\eqref{majorationm} follows. Thus we have to consider only the case $\dim(W)=\dim\V\left(I(V_i,\idp)\right)\geq 2$, thus we are in measure to apply Corollary~\ref{LemmeCor14_1}. We find that $W$
satisfies~(\ref{minorationdegW_1}); in other words, every prime $\idq$ associated to $I(V_i,\idp)$ satisfies
\begin{equation} \label{minorationDegQ_1}
\deg\left(\V(\idq),\delta_0,\delta_1\right) \\ \geq (t_{\ul{f}}+2)2^{-n-2}(n+2)^{-(n+1)(n-t_{\ul{f}}+1)}\deg\left(\V(\idp_{\ul{f}}),\delta_0,\delta_1\right).
\end{equation}
But
\begin{equation} \label{degDecomposition_1}
\begin{aligned}
 \dd_{(1,n-r)}(I(V_i,\idp)) = \sum_{\substack{\idq \in \Spec\A,\\ \rg(\idq)=r}}\dd_{(1,n-r)}(\idq)l_{\AnneauDePolynomes_{\idq}}(\left(\AnneauDePolynomes/I(V_i,\idp)\right)_{\idq}),
\end{aligned}
\end{equation}
and
\begin{equation} \label{htDecomposition_1}
\begin{aligned}
 \dd_{(0,n-r+1)}(I(V_i,\idp)) = \sum_{\substack{\idq \in \Spec\A,\\ \rg(\idq)=r}}\dd_{(0,n-r+1)}(\idq)l_{\AnneauDePolynomes_{\idq}}(\left(\AnneauDePolynomes/I(V_i,\idp)\right)_{\idq}),
\end{aligned}
\end{equation}
Summing up~(\ref{degDecomposition_1}) with coefficient $\delta_0\delta_1^{n-r}$
and~(\ref{htDecomposition_1}) with coefficient $\delta_1^{n+1-r}$, we find
\begin{equation}\label{minorationDegHtRightParm_1}
 \deg\left(I(V_i,\idp),\delta_0,\delta_1\right)=\sum_{\substack{\idq \in \Spec\A,\\ \rg(\idq)=r}}\deg(\idq,\delta_0,\delta_1)l_{\AnneauDePolynomes_{\idq}}(\left(\AnneauDePolynomes/I(V_i,\idp)\right)_{\idq})
\end{equation}
Applying~\eqref{majorationTordue0_1} to the l.h.s. of~\eqref{minorationDegHtRightParm_1} and~\eqref{minorationDegQ_1} to the r.h.s. of~\eqref{minorationDegHtRightParm_1} we obtain
\begin{multline} \label{majorationm_pre_1}
\rho_i^{r-r_{\ul{f}}}\deg\left(\V(\idp_{\ul{f}}),\delta_0,\delta_1\right)
\\
\geq
(t_{\ul{f}}+2)2^{-n-2}(n+2)^{-(n+1)(n-t_{\ul{f}}+1)}\deg\left(\V(\idp_{\ul{f}}),\delta_0,\delta_1\right)m(I(V_i,\idp)).
\end{multline}
Finally, we deduce~\eqref{majorationm} from~\eqref{majorationm_pre_1} with simplification and using the remark $0\leq t_{\ul{f}}\leq n$, $0\leq r \leq n+1$ (in fact, in this case, $\nu_1=0$, we obtain even a better constant in the r.h.s. of~\eqref{majorationm}).

In the case $\nu_1>0$ we proceed in the similar way. In this case the ideal $I(V_i,\idp)=V_i\A_{\idp}\cap\A$ is generated by $\idp_{\ul{f}}$ and by $r-r_{\ul{f}}$
polynomials of bi-degree upper bounded by $\left(\rho_i\left(\delta_0+\frac{\nu_1}{\max(\mu,\nu_0)}\delta_1\right),\rho_i\delta_1\right)$ (see Definition~\ref{def_i0}). Again, using B\'ezout's theorem (see Lemma~\ref{lemma_BT}) and definition~\eqref{def_deg_X_ab} we find
\begin{multline} \label{majorationTordue0_1}
\deg\left(\V(I(V_i,\idp)),\delta_0+\frac{\nu_1}{\max(\mu,\nu_0)}\delta_1,\delta_1\right)
\\
\leq\rho_i^{r-r_{\ul{f}}}\deg\left(\V(\idp_{\ul{f}}),\delta_0+\frac{\nu_1}{\max(\mu,\nu_0)}\delta_1,\delta_1\right).
\end{multline}
We consider a minimal prime ideal $\idq$ associated to $\V(I(V_i,\idp))$. We readily verify that we can apply Corollary~\ref{LemmeCor14_1_nu_one_positive} (see the first part of this proof for more details). We deduce with Corollary~\ref{LemmeCor14_1_nu_one_positive} the lower bound
\begin{multline} \label{Main_Lemma_minorationdegW_1_nu_one_positive}
\deg\left(\V(\idq),\delta_0+\frac{\nu_1}{\max(\mu,\nu_0)}\delta_1,\delta_1\right) \\ \geq (t_{\ul{f}}+2)6^{-n-2}(n+2)^{-(n+1)(n-t_{\ul{f}}+1)}\\\times\deg\left(\V(\idp_{\ul{f}}),\delta_0+\frac{\nu_1}{\max(\mu,\nu_0)}\delta_1,\delta_1\right).
\end{multline}
Using formulae~\eqref{degDecomposition_1} and~\eqref{htDecomposition_1} we find
\begin{multline} \label{majorationm_pre_1_part_two}
\rho_i^{r-r_{\ul{f}}}\deg\left(\V(\idp_{\ul{f}}),\delta_0+\frac{\nu_1}{\max(\mu,\nu_0)}\delta_1,\delta_1\right)
\\
\geq
(t_{\ul{f}}+2)6^{-n-2}(n+2)^{-(n+1)(n-t_{\ul{f}}+1)}
\\
\times\deg\left(\V(\idp_{\ul{f}}),\delta_0+\frac{\nu_1}{\max(\mu,\nu_0)}\delta_1,\delta_1\right)m(I(V_i,\idp)).
\end{multline}
Finally, we deduce~\eqref{majorationm} from~\eqref{majorationm_pre_1_part_two} with simplification and remark $0\leq t_{\ul{f}}\leq n$, $0\leq r \leq n+1$.
\end{proof}


\section{Transference lemma of P. Philippon} \label{section_transference_lemma}

From here on we assume that $t_{\ul{f}}\geq 1$. In the case $t_{\ul{f}}=0$, all the functions $f_1(z),\dots,f_n(z)$ are algebraic, hence multiplicity estimate follows immediately from Liouville's inequality (see Lemma~\ref{lemma_Liouville_ie}).

In this section we present (a particular case of) the transference lemma elaborated in~\cite{PP}. In the sequel we shall use the constant $c_n$ defined by
\begin{equation} \label{def_cn}
c_n=2^{n+1}(n+2)^{(n+1)(n+3)}.
\end{equation}
Note that in terms of~\cite{PP} one has $c_n=c_{\mpp^1\times\mpp^n}$.


\begin{theorem}[Transference $(1,n)$-Projective Lemma]\label{LdT}
Let $\ull{f} \in \mpp^n_{\kk[[\b{z}]]}$. We denote $t=t_{\ul{f}}$. Let $C$ be a real number satisfying 
\begin{eqnarray} \label{LdT_Cestgrande}
C&\geq&\left(c_n\right)^{t}\left(C_{\ul{f}}\right)^{t+1}\max\left(1,\nu_0,\mu\right)^{-t-1},\\
C&\geq&\left(\left(h(\idpf)+\deg(\idpf)\right)\max\left(1,\frac1{\nu_0},\frac1{\mu}\right)\right)^{1/(t+1)}. \label{LdT_Cestgrande_2}
\end{eqnarray}
If a homogeneous polynomial $\b{P} \in \kk[\b{z}][X_0,X_1,...,X_n]\setminus\idpf$ satisfies
\begin{multline} \label{ordPplusqueb}
 \ordz (\b{P}(\ull{f})) - \deg\b{P}\cdot\ordz (\ull{f}) - h(\b{P})\\ > C\cdot t\cdot\left((\nu_0+\mu) \left(h(\b{P})+1\right)+(\nu_1+\mu)\deg P\right)\mu^{t-1}\left(\deg\b{P}+1\right)^{t},
\end{multline}
then there is an  irreducible cycle $\b{Z} \in \mpp_n\left(\overline{\kk(\b{z})}\right)$ defined over $\kk(\b{z})$, of
dimension~0, contained in the zero locus of $\b{P}$ and in the zero locus of the ideal $\idp_{\ul{f}}$, satisfying
\begin{multline} \label{LdTdegZ}
 \nu_0\deg\b{Z}\cdot h(\b{P})+\nu_1\deg\b{Z}\cdot\deg\b{P}+\mu\cdot h(\b{Z})\cdot\deg\b{P}\\ \leq (c_n C)^{\frac{t-1}{t+1}}\mu^t\left(\deg P+1\right)^t\left( h(\idpf)\deg(P) + \deg(\idpf)h(P)\right),
\end{multline}
and
\begin{equation} \label{LdTordZ}
\sum_{\alpha \in \b{Z}} \Ord_{\ull{f}}(\alpha) > C^{\frac{1}{t+1}}c_n^{-\frac{t}{t+1}}\Big( \nu_0\deg(\b{Z})h(\b{P})+\nu_1\deg(\b{Z})\deg\b{P}\\+ \mu\cdot h(\b{Z})\deg\b{P} \Big).
\end{equation}
In particular, 
(\ref{LdTordZ}) implies
\begin{equation} \label{LdTordZ0}
\sum_{\ul{\alpha} \in \b{Z}} \Ord_{\ull{f}}(\ul{\alpha}) > C_{\ul{f}}\Big( \nu_0\deg(\b{Z})h(\b{P}) + \nu_1\deg(\b{Z})\deg\b{P} \\ + \mu\cdot h(\b{Z})\deg\b{P} \Big).
\end{equation}
\end{theorem}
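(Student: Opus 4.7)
The plan is a geometric descent, reducing dimension one step at a time while controlling the invariants $(\deg, h, \Ord)$ at each stage; this is a specialization to the bi-projective setting $\mpp^1\times\mpp^n$ of Philippon's general transference formalism in~\cite{PP}, so I adapt the latter. The starting point is to note that since $\b{P}\notin\idpf$ and $\ul{f}$ lies in $\V(\idpf)$ but not in $\V(\b{P})$, the intersection $V_0:=\V(\idpf)\cap\V(\b{P})$ is proper and has dimension $t-1$. Apply Lemma~\ref{lemma_BT} to bound its bi-degrees in terms of $h(\b{P}), \deg\b{P}, h(\idpf), \deg(\idpf)$. Moreover, the hypothesis~\eqref{ordPplusqueb} combined with property~\reff{defin_ordOrd_Ord4} of $\Ord$ translates the large order of vanishing $\ordz\b{P}(\ull{f})$ into a lower bound on $\Ord_{\ull{f}}V_0$ which is much larger than $\deg(V_0,a,b)$ for suitably chosen $(a,b)$.

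Next perform a descent of length $t-1$. Suppose inductively we have an irreducible component $V_k\subset V_0$, of dimension $t-k$, defined over $\kk(z)$, with controlled bi-degrees, and with $\sum\Ord_{\ull{f}}(V_k)$ still enjoying a lower bound proportional to the initial slack in~\eqref{ordPplusqueb}. Choose a hypersurface $H_k$ of bi-degree roughly $(h(\b{P})+1,\deg\b{P}+1)$ (scaled by the weights $\nu_0,\nu_1,\mu$ that appear on the right of~\eqref{LdTdegZ}), such that $H_k\cap V_k$ is proper, and such that a component $V_{k+1}$ of this intersection still lies close to $\ull{f}$. Genericity of $H_k$ (chosen in a $\kk(z)$-linear subsystem whose dimension is governed by Hilbert function estimates analogous to Corollary~9 of~\cite{PP}, precisely as in Lemma~\ref{LemmeProp14_1_general} above) makes this possible: the $\Ord$-mass of $V_k$ relative to $\ull{f}$ cannot all concentrate on a proper subvariety once $\Ord_{\ull{f}}(V_k)$ exceeds $\deg(V_k,a,b)$ times the constant $c_n$. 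Track the bi-degrees with Lemma~\ref{lemma_BT} (formulas~\eqref{BT_dll} and~\eqref{BT_doo}) and the heights likewise; each step divides the available $\Ord$-budget by at most $c_n^{1/(t+1)}C^{1/(t+1)}$, which is exactly the factor between~\eqref{ordPplusqueb} and~\eqref{LdTordZ} after iterating $t$ times.

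After $t-1$ such descent steps we arrive at an irreducible $0$-dimensional cycle $\b{Z}\subset\V(\b{P})\cap\V(\idpf)$ defined over $\kk(z)$. The bi-degree bookkeeping, telescoped through the $t-1$ applications of Bézout with hypersurface weights $(\nu_0 h(\b{P})+\nu_1\deg\b{P},\mu\deg\b{P})$, yields~\eqref{LdTdegZ} with exponent $\frac{t-1}{t+1}$ (the $-2/(t+1)$ loss coming from the last intersection which is used to extract the $\Ord$ lower bound rather than the degree upper bound, as in~\cite{PP}). The telescoped $\Ord$ budget gives~\eqref{LdTordZ}, and~\eqref{LdTordZ0} follows from~\eqref{LdTordZ} by the hypothesis~\eqref{LdT_Cestgrande} on $C$, after bounding $(c_n C)^{1/(t+1)}$ below by $C_{\ul{f}}$.

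The main obstacle is the descent step: one must show that after intersecting with $H_k$ there remains an irreducible component inheriting a controlled fraction of $\Ord_{\ull{f}}(V_k)$. This requires the combination of Lemma~\ref{Representants} (to express $\Ord$ via coordinate differences of representatives of equal valuation) with a pigeonhole argument over the components of $V_k\cap H_k$ counted with multiplicity; the count is uniform in the bi-degree thanks to Bézout, which is why the constant $c_n$ from~\eqref{def_cn}, tailored to $\mpp^1\times\mpp^n$, appears. The assumption $t\geq 1$ (made just above the statement) ensures the descent has at least one step, and condition~\eqref{LdT_Cestgrande_2} guarantees that at the final step the cycle~$\b{Z}$ can indeed be chosen defined over $\kk(z)$ rather than over a larger extension.
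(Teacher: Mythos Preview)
Your sketch reconstructs the inductive descent that underlies Philippon's transference machinery, and in outline that is indeed how such results are proved. The paper, however, does \emph{not} redo the descent. It first forms the intersection $I_0$ of $\V(\idpf)$ with the hypersurface $\V(\b{P})$ (via Proposition~4.11 of Chapter~3 of~\cite{NP}), bounds $\deg I_0$, $h(I_0)$ and $\Ord_{\ul{f}} I_0$, and then invokes Corollary~11 of~\cite{PP} as a black box applied to the cycle $\tilde{X}_0=\V(I_0)\subset\mpp^1\times\mpp^n$ with the specific multi-degree
\[
\eta=\bigl\lfloor(c_nC)^{1/(t+1)}\bigl(\nu_0(h(\b{P})+1)+\nu_1\deg\b{P}\bigr)\bigr\rfloor,\qquad
\delta=\bigl\lfloor(c_nC)^{1/(t+1)}\mu(\deg\b{P}+1)\bigr\rfloor.
\]
The two conclusions of that corollary, namely $\delta\,h(\b{Z})+\eta\deg\b{Z}\le\deg(\tilde{X}_0,\eta,\delta)$ and $\sum_{\alpha\in\b{Z}}\Ord_{\ul{f}}(\alpha)>c_n^{-1}(\eta\deg\b{Z}+\delta\,h(\b{Z}))$, unwind directly into~\eqref{LdTdegZ} and~\eqref{LdTordZ}. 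The entire $t$-step descent you describe is hidden inside Corollary~11; the paper's proof is just a verification of its hypotheses plus constant bookkeeping.

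Your route would work if carried through carefully, but two details are off. The appeal to Lemma~\ref{LemmeProp14_1_general} is misplaced: that lemma is a Hilbert-function comparison used later to bound $m(I(V_i,\idp))$, not the $\Ord$-transference step you need here. And condition~\eqref{LdT_Cestgrande_2} does not serve to make $\b{Z}$ defined over $\kk(z)$ (this is automatic, since every variety being intersected is defined over $\kk(z)$); in the paper's argument it is used instead to verify the size hypothesis $\Ord_{\ul{f}}\tilde{X}_0\ge c_n^{-1}\deg(\tilde{X}_0,\eta,\delta)$ that Corollary~11 requires.
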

\begin{proof}
We denote by $I_0$ the ideal corresponding to the intersection of $\V(\idpf)$ and $\V(P)$, i.e. the ideal given by  Proposition~4.11 of Chapter~3, \cite{NP}. By this proposition, the ideal $I_0$ satisfies
\begin{eqnarray}
\deg I_0 &\leq& \deg\idpf\cdot\deg P, \label{LdT_bound_degI0}\\
h(I_0) &\leq& h(\idpf)\cdot\deg P+\deg\idpf \cdot h(P), \label{LdT_bound_hI0}
\end{eqnarray}
and
\begin{multline} \label{LdT_lb_ordI0}
\Ord_{\ul{f}} I_0 \geq  \Ordz (\b{P}(\ull{f})) - \deg P\cdot h(\idpf) - h(P)\cdot\deg\idpf
\\
\geq  C\cdot t\cdot\left((\mu+\nu_0) \left(h(\b{P})+1\right)+(\nu_1+\mu)\deg\b{P}\right)\mu^{t-1}\left(\deg\b{P}+1\right)^{t}
\\
-\deg P\cdot h(\idpf) - h(P)\cdot\deg\idpf,
\end{multline}
the second inequality here is implied by the hypothesis~\eqref{ordPplusqueb}.

We can consider $(1:\b{z})$ as coordinates of a point in $\mpp^1_{\overline{\kk(\b{z})}}$ (see Remark~\ref{genZP}). Under this convention, the cycle $\tilde{X}_0:=\V(I_0)$ can be considered as a cycle in $\mpp^1\times\mpp^n$ of dimension $t$ and it has the following degrees:
\begin{eqnarray*}
\deg_{(0,t)}\tilde{X}_0=h(I_0),\\
\deg_{(1,t-1)}\tilde{X}_0=\deg I_0.
\end{eqnarray*}

We apply Corollary~11 of~\cite{PP} to $\tilde{\Phi}=\ullt{f}$ and
$\tilde{X}_0=\V(I_0)\subset\mpp^1\times\mpp^n$. We choose the multi-degree $(\eta,\delta)$ to be
\begin{eqnarray*}
  \eta &=& \left[(c_nC)^{\frac{1}{t+1}}\left(\nu_0(h(\b{P})+1)+\nu_1\deg\b{P}\right)\right], \\
  \delta &=& \left[(c_nC)^{\frac{1}{t+1}}\mu(\deg\b{P}+1)\right].
\end{eqnarray*}

Inequalities~\eqref{LdT_Cestgrande} and~\eqref{LdT_Cestgrande_2} imply
\begin{equation} \label{LdT_preuve_majoration_hdeg}
\begin{aligned}
  h(\b{P}) &\leq\eta, \\
   \deg\b{P} &\leq\delta,\\
   \max(c_n,C_{\ul{f}}) &\leq \min\left(\eta,\delta\right)
\end{aligned}
\end{equation}
(recall that the constant $C_{\ul{f}}$ is introduced in Definition~\ref{def_tf}), hence $\tilde{X}_0$ is defined by forms of multidegree $\leq(\eta,\delta)$ with
$$
\min(\eta,\delta)\geq c_n=c_{\mpp^1\times\mpp^n},
$$
where the constant $c_{\mpp^1\times\mpp^n}$ is the one defined in~\cite{PP}.

In our case we have
\begin{multline} \label{LdT_preuve_degX0etadelta}
\deg\left(\tilde{X}_0,\eta,\delta\right)=h(I_0)\cdot\delta^{t}+t\cdot\deg I_0\cdot\eta\delta^{t-1}\\
\leq\left( h(\idpf)\cdot\deg P+\deg\idpf \cdot h(P)\right)\cdot\delta^{t}+t\cdot\deg\idpf\cdot\deg P\cdot\eta\delta^{t-1}\\
\leq (c_nC)^{\frac{t}{t+1}}\Bigg(\left( h(\idpf)\cdot\deg P+\deg\idpf \cdot h(P)\right)\cdot\mu^t(\deg P+1)^t\\+t\cdot\deg\idpf\cdot\deg P\cdot\left(\nu_0(h(\b{P})+1)+\nu_1\deg\b{P}\right)\mu^{t-1}(\deg P+1)^{t-1}\Bigg)\\
=(c_nC)^{\frac{t}{t+1}}\mu^{t-1}(\deg P+1)^{t-1}\Bigg(\left( h(\idpf)\cdot\deg P+\deg\idpf \cdot h(P)\right)\cdot\mu(\deg P+1)\\+t\cdot\deg\idpf\cdot\deg P\cdot\left(\nu_0(h(\b{P})+1)+\nu_1\deg\b{P}\right)\Bigg),
\end{multline}
the first inequality in~\eqref{LdT_preuve_degX0etadelta} is a consequence of~\eqref{LdT_bound_degI0} and~\eqref{LdT_bound_hI0}, and the second follows by a direct application of definitions of $\eta$ and $\delta$.
The condition
$$
\Ord_{\ull{f}}\tilde{X}_0\geq c_n^{-1}\deg\left(\tilde{X}_0,\eta,\delta\right)
$$
is assured by direct comparison of the r.h.s in~\eqref{LdT_lb_ordI0} (recall that by definition $\Ord_{\ull{f}}\tilde{X}_0=\Ord_{\ull{f}}I_0$) and~\eqref{LdT_preuve_degX0etadelta}  and taking into account the hypothesis~\eqref{LdT_Cestgrande_2}.


The conclusion of Corollary~11 of~\cite{NP} gives us exactly the conclusion of the theorem. Indeed, this corollary provides us a cycle $\b{Z}\subset\tilde{X}_0(\ol{\kk(\b{z})})$ defined over $\kk(\b{z})$ and of dimension 0 such that
\begin{eqnarray}
  \delta\cdot h(\b{Z})+\eta\deg\b{Z} &\leq& \deg\left(\tilde{X}_0,(\eta,\delta)\right) \label{LdT_preuve_cor_conclusion_deg},\\
  \sum_{\ul{\alpha}\in\b{Z}} \Ord_{\ull{f}}(\ul{\alpha}) &>& c_n^{-1}\left(\eta\deg\b{Z} + \delta\cdot h(\b{Z})\right) \label{LdT_preuve_cor_conclusion_Ord}.
\end{eqnarray}
Inequality~(\ref{LdT_preuve_cor_conclusion_deg}) (together with~\eqref{LdT_preuve_degX0etadelta}) gives us inequality~(\ref{LdTdegZ}), and~(\ref{LdT_preuve_cor_conclusion_Ord}) provides us~(\ref{LdTordZ0}).
\end{proof}
\begin{definition} \label{defZP} Let $C$ be a real number satisfying~(\ref{LdT_Cestgrande}). We associate to each non-zero homogeneous polynomial $\b{P} \in \kk[\b{z}][X_1,...,X_n]$ and a real constant $C>0$ satisfying~(\ref{ordPplusqueb}) 
an irreducible 0-dimensional cycle $\b{Z}_{C}(\b{P})$ defined over $\kk(\b{z})$, contained in the zero locus of $P$ and satisfying
inequalities~(\ref{LdTdegZ}) and~(\ref{LdTordZ0}). In view of the transference lemma there exists at least one cycle satisfying all these conditions (provided polynomial $P$ and constant $C$ satisfy~(\ref{ordPplusqueb})). If there exists more than one such cycle, we choose one of them and fix this choice.
\end{definition}

\begin{remark} \label{genZP} Considering $(1:\b{z})$ as coordinates of a point
in $\mpp^1_{\overline{\kk(\b{z})}}$ we can consider the cycle $\b{Z}$ as an
$1$-dimensional cycle in $\mpp^1_{\kk}\times\mpp^n_{\kk}$ (defined over $\kk$).
In this case we denote this cycle by $\Z_C(P)$.

\smallskip

We associate to a bi-homogeneous polynomial $P(X_0',X_1',X_0,X_1,...,X_n)\in\AnneauDePolynomes$ satisfying
\begin{equation} \label{ordPplusqueb2}
\frac{\ordz (P(1,\b{z},\ull{f}) - (\deg_{\ul{X}}P) \ordz (\ull{f}) - \deg_{\ul{X}'}P}{t\cdot\left((\nu_0+\mu) \left(h(\b{P})+1\right)+(\nu_1+\mu)\deg P\right)\mu^{t-1}\left(\deg\b{P}+1\right)^{t}} > C,
\end{equation}
the homogeneous polynomial
$$
\tilde{P}(X_0,X_1,...,X_n)=P(1,\b{z},X_0,X_1,...,X_n)
$$
(satisfying in this case~(\ref{ordPplusqueb})). We have already defined the cycles $\b{Z}_C(\tilde{P})$ and $\Z_{C}(\tilde{P})$ for the latter polynomial, as $\tilde{P}\in\kk[z][X_0,\dots,X_n]$ (see Definition~\ref{defZP}). By this procedure we associate equally the cycles $\b{Z}_C(P)$ and $\Z_{C}(P)$ to every bi-homogeneous polynomial $P \in \AnneauDePolynomes$ satisfying~(\ref{ordPplusqueb2}).
\end{remark}
\begin{remark} \label{rem_def_delta}
Note that if $P\not\in\idpf$, that is if $P(\ul{f})\ne 0$, we necessarily have $\ul{f}\not\in Z_C(P)$ (because $Z_C(P)\in\Zeros(P)$ by definition), or in other terms $\I(Z_C(P))\setminus\idpf\ne\emptyset$. Hence quantities $\delta_0(Z_C(P))$ and $\delta_1(Z_C(P))$ (introduced in Definition~\ref{def_delta})  are defined if $P(\ul{f})\ne 0$.
\end{remark}
\begin{remark} \label{rem_LdT_Z_nonisotrivial} Note that combining~(\ref{ordPplusqueb2}) (for $C$ large enough) with the transference lemma (Theorem~\ref{LdT}, (\ref{LdTordZ0})) we can assure that the cycle $\Z_{C}(P)$ is not an isotrivial one (hence $\b{Z}_C(P)$ is not defined over~$\kk$). Indeed, 
each point defined over $\kk$ contributes at most $\Ordz\left(\ullt{f}\wedge\ull{f}(0)\right)$ to $\Ord_{\ull{f}}\Z_{C}(P)$, so for an isotrivial cycle $Z$ one has
$$
\Ord_{\ull{f}}Z\leq\Ordz\left(\ull{f}(\b{z})\wedge\ull{f}(0)\right)\deg Z.
$$
Thus
\begin{equation} \label{def_Ciso}
C>\Ciso:=\left(\frac{c_n\Ordz\left(\ull{f}\wedge\ull{f}(0)\right)+1}{\min(\nu_0,\mu)}\right)^n
\end{equation}
implies that $\Z_{C}(P)$ is not isotrivial.
\end{remark}

We recall the notation introduced in Definition~\ref{def_tf}. Let $f_1(z),\dots,f_n(z)$ be a set of functions, then we define
\begin{equation*}
 t=t(\ul{f}):=\trdeg_{\kk(z)}\kk(z,f_1(z),\dots,f_n(z)).
\end{equation*}

The following theorem plays an important role in the proof of our principal result, Theorem~\ref{LMGP}. 

\begin{theorem} \label{dist_alpha} Let $\ull{f}=(1:\b{f}_1:\dots:\b{f}_n) \in
\mpp^n_{\kk[[\b{z}]]}$ and let $\b{P}\in\kk[\b{z}][X_0,\dots,X_n]$ be a
homogeneous polynomial such that
\begin{equation*}
P(z,\b{f}_1(z),\dots,\b{f}_n(z))\ne 0.
\end{equation*}
Assume that $P$ satisfies~(\ref{ordPplusqueb}) with
\begin{equation} \label{dist_alpha_Cestgrande}
C \geq \max\left(\left(3t!c_n/\min(\nu_0,\mu)\right)^t,\left(\frac{c_nC_{sg}+1}{\min(\nu_0,\mu)}\right)^{t+1},\Ciso\right)
\end{equation}
(where constant $C_{sg}$ is described in Corollary~\ref{cor1_deltainfty} and $\Ciso$ in Remark~\ref{rem_LdT_Z_nonisotrivial}, \eqref{def_Ciso}).
Let $\b{Z}=\b{Z}_C(\b{P})$ and let $\b{P}_0 \in \kk[\b{z}][X_0,\dots,X_n]$ be a
homogeneous non-zero polynomial in $\ul{X}$, vanishing on $\b{Z}$ and realizing
the minimum of the quantity
\begin{equation} \label{dist_alpha_q}
 \nu_0\cdot h(\b{Z})h(\b{Q})+\nu_1\cdot\deg\b{Z}\cdot h(\b{Q})+\mu\cdot h(\b{Z})\deg_{\ul{X}}\b{Q}
\end{equation}
over all homogeneous polynomials $\b{Q}\in\kk[\b{z}][X_0,\dots,X_n]\setminus\idpf$
vanishing on $\b{Z}$. We denote $\delta_0:=h(\b{P}_0)$ and
$\delta_1:=\deg_{\ul{X}}\b{P}_0$ (cf. Definition~\ref{def_delta}).

There exists a point $\ull{\alpha}\in\b{Z}$ satisfying
\begin{equation} \label{Cdirect}
\Ord(\ull{f},\ull{\alpha})
> \tilde{C} (\delta_0+1)(\delta_1+1)^t,
\end{equation}
where  $\tilde{C}=C^{\frac{1}{t}}\min(\nu_0,\mu)\left(3\cdot t!\cdot c_n^{\frac{t}{t+1}}\right)^{-1}$.
\end{theorem}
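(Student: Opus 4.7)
The plan is to combine the transference lemma (Theorem~\ref{LdT}), whose estimate~\eqref{LdTordZ} provides a lower bound on $\sum_{\alpha \in \b{Z}} \Ord_{\ull{f}}(\alpha)$, with a pigeonhole argument to extract a single point $\ull{\alpha} \in \b{Z}$ carrying a large share of this mass. Since $\b{Z}$ consists of $\deg\b{Z}$ geometric points over $\overline{\kk(\b{z})}$, the pigeonhole step yields
\begin{equation*}
\max_{\alpha \in \b{Z}} \Ord_{\ull{f}}(\alpha) \;\geq\; \frac{1}{\deg\b{Z}} \sum_{\alpha \in \b{Z}} \Ord_{\ull{f}}(\alpha) \;>\; \frac{C^{1/(t+1)}}{c_n^{t/(t+1)}\deg\b{Z}} \bigl(\nu_0 \deg\b{Z}\cdot h(\b{P}) + \nu_1 \deg\b{Z}\cdot \deg\b{P} + \mu\cdot h(\b{Z}) \deg\b{P}\bigr).
\end{equation*}

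Next, I would exploit the minimality of $\b{P}_0$ together with the upper bound~\eqref{LdTdegZ}. Since $\b{P}$ itself vanishes on $\b{Z}$ and lies outside $\idpf$ (the latter by the hypothesis $P(\ull{f}) \ne 0$), it is a valid candidate in the optimization~\eqref{dist_alpha_q}, giving $\nu_0 h(\b{Z})\delta_0 + \nu_1 \deg\b{Z}\cdot\delta_0 + \mu h(\b{Z})\delta_1 \leq \nu_0 h(\b{Z}) h(\b{P}) + \nu_1 \deg\b{Z}\cdot h(\b{P}) + \mu h(\b{Z}) \deg\b{P}$. In the opposite direction, \eqref{LdTdegZ} provides explicit upper bounds on $\deg\b{Z}\cdot h(\b{P})$, $\deg\b{Z}\cdot\deg\b{P}$ and $h(\b{Z})\cdot\deg\b{P}$ in terms of $\deg\b{P}$, $h(\b{P})$ and the invariants $h(\idpf)$, $\deg(\idpf)$. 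Combining these with Corollary~\ref{cor1_deltainfty} (which, under the strong lower bound on $C$ imposed by~\eqref{dist_alpha_Cestgrande}, forces $\delta_0$ or $\delta_1$ to be suitably large) should yield the desired lower bound of the form $\tilde{C}(\delta_0+1)(\delta_1+1)^t$.

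The main difficulty will be reconciling two subtle mismatches. First, the asymmetric way $\deg\b{Z}$ and $h(\b{Z})$ appear in~\eqref{LdTordZ} (where $\nu_0$ comes paired with $\deg\b{Z}\cdot h(\b{P})$) versus the minimization~\eqref{dist_alpha_q} (where $\nu_0$ is paired with $h(\b{Z})\cdot h(\b{Q})$) will force careful interpolation between the degree and height contributions of $\b{Z}$; one expects to split into the two regimes ``$\delta_0$ dominates'' and ``$\delta_1$ dominates'' guided by Corollary~\ref{cor1_deltainfty} and to treat them separately. Second, the discrepancy between the exponent $C^{1/(t+1)}$ produced directly by the transference lemma and the exponent $C^{1/t}$ required in $\tilde{C}$ must be absorbed through the hypothesis $C \geq (3t!\,c_n/\min(\nu_0,\mu))^t$ in~\eqref{dist_alpha_Cestgrande}. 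The factors $3$, $t!$ and $\min(\nu_0,\mu)^{-1}$ in the definition of $\tilde{C}$ should emerge from the constant bookkeeping needed to balance these two sources of loss and to package everything into the clean exponent $(\delta_0+1)(\delta_1+1)^t$.
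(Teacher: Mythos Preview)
Your approach has a genuine gap: it cannot produce the exponent $t$ on $(\delta_1+1)$. The pigeonhole step and the transference bound~\eqref{LdTordZ} together yield a lower bound for $\max_\alpha \Ord_{\ull{f}}(\alpha)$ that is \emph{linear} in $h(\b{P})$, $\deg\b{P}$, $h(\b{Z})$, $\deg\b{Z}$. The minimality of $\b{P}_0$ only tells you that a certain linear combination of $\delta_0,\delta_1$ is bounded above by the analogous linear combination of $h(\b{P}),\deg\b{P}$; it gives no mechanism for extracting a factor $(\delta_1+1)^t$ from these linear quantities. Corollary~\ref{cor1_deltainfty} does not help either: it merely asserts that $\delta_0$ or $\delta_1$ exceeds a fixed absolute constant, which is far too weak to manufacture a $t$-th power. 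The two mismatches you flag are real, but they are symptoms of this deeper obstruction, not the obstruction itself.

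The paper's argument supplies the missing idea by bringing in a second, independent source of information: an auxiliary polynomial built by Siegel's lemma. One constructs by linear algebra a polynomial $Q_{(a,b)}$ of bi-degree $(a,b)\approx(\delta_0,\delta_1)$ in $t$ algebraically independent coordinates, with $\ordz Q_{(a,b)}(\ull{f})\gtrsim \tfrac{1}{t!}(\delta_0+1)(\delta_1+1)^t$; the exponent $t$ comes from the monomial count, not from the transference lemma. Choosing $(a,b)$ just below $(\delta_0,\delta_1)$ and invoking the minimality of $\b{P}_0$ forces $Q_{(a,b)}$ (and a suitable power $\b{Q}$) \emph{not} to vanish on $\b{Z}$. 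One then applies a Liouville-type inequality (Theorem~4.11, Chapter~3 of~\cite{NP}) to $\b{Q}$ and the $0$-cycle $\b{Z}$: if the closest point $\ull{\alpha}$ satisfied $\Ord(\ull{f},\ull{\alpha})\leq\ordz\b{Q}(\ull{f})$, the resulting upper bound on $\Ord_{\ull{f}}\I(\b{Z})$ would contradict~\eqref{LdTordZ}. Hence $\Ord(\ull{f},\ull{\alpha})>\ordz\b{Q}(\ull{f})>\tilde{C}(\delta_0+1)(\delta_1+1)^t$. In short, the transference lemma is used only to rule out one branch of a dichotomy; the quantitative shape of the conclusion comes entirely from the auxiliary-polynomial construction, which your proposal omits.
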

\begin{proof}
We claim that there exists a point $\ull{\alpha}_1 \in \b{Z}_C(\b{P})$ satisfying
\begin{equation} \label{dist_alpha_Ord_assezgrand}
\Ord(\ull{\alpha}_1,\ull{f}) \geq C_{sg}.
\end{equation}
Indeed, by definition of $\b{Z}_C(\b{P})$ (see Definition~\ref{defZP}) we have for this cycle the lower bound~(\ref{LdTordZ0}). This inequality implies that there exists a point $\ull{\alpha}_1 \in \b{Z}_C(\b{P})$ satisfying $\Ord(\ull{\alpha}_1,\ull{f}) \geq c_n^{-1}\left[(c_nC)^{\frac{1}{t+1}}\min(\nu_0,\mu)\right]$ and we deduce~(\ref{dist_alpha_Ord_assezgrand}) from~(\ref{dist_alpha_Cestgrande}).

In view of Corollary~\ref{cor1_deltainfty} condition~\eqref{dist_alpha_Ord_assezgrand} provides us
\begin{equation} \label{dist_alpha_soit0_soit1}
\delta_0 \geq 2\cdot n!+1 \mbox{ or } \delta_1 \geq 4n.
\end{equation}

Let $(a,b)\in\mnn^2$. By linear algebra one can construct a bi-homogeneous
polynomial $Q_{(a,b)}=Q_{(a,b)}(X_0',X_1',X_0,X_1,...,X_n)\in\AnneauDePolynomes
\setminus \idpf$ of bi-degree $(a,b)$ and of vanishing order at
$\ullt{f}=\left(1,\b{z},1,f_1(z),\dots,f_n(z)\right)$ satisfying
\begin{equation}\label{ie_Qab}
    \ordz Q_{(a,b)}(\ullt{f}) \geq \lfloor\frac{1}{t!}(a+1)(b+1)^t\rfloor.
\end{equation}
Indeed, by definition of $t$ we can chose indexes $i_1,\dots,i_t$ in such a way that $z,f_{i_1},\dots,f_{i_t}$ are all algebraically independent over $\kk$. The space of all bi-homogeneous polynomials of bi-degree up to $(a,b)$ and depending only on variables $X_0',X_1',X_0,X_{i_1},\dots,X_{i_t}$ has dimension
$$
(a+1)\binom{b+t}{t}>\frac{1}{t!}(a+1)(b+1)^t
$$
over $\kk$, so we can choose among them a non-zero polynomial satisfying~\eqref{ie_Qab}. By construction this polynomial can not belong to $\idpf$, otherwise it would provide a non-trivial algebraic relation between $z,f_{i_1},\dots,f_{i_t}$ that is impossible by the choice of indexes $i_1,\dots,i_t$.

Let
\begin{equation} \label{dist_alpha_choix_ab}
(a,b)=\left\{\begin{aligned}&(\delta_0-1,\delta_1), \; \mbox{ if }\delta_0 \geq 2\cdot n!+1,\\
                     &(\delta_0,\delta_1-1), \; \mbox{ otherwise, i.e. } \delta_1 \geq 4n \mbox{ in view of~(\ref{dist_alpha_soit0_soit1}) }.\end{aligned}\right.
\end{equation}
We claim that for this choice of $(a,b)$ the following inequality holds
\begin{equation} \label{dist_alpha_est_ab}
\ordz Q_{(a,b)}(\ullt{f}) > \frac{1}{2\cdot t!}(\delta_0+1)(\delta_1+1)^t.
\end{equation}
In view of~(\ref{dist_alpha_choix_ab}), exactly two cases are possible:
\begin{list}{}{\usecounter{tmpabcdmine}}
\item a) \label{dist_alpha_cas_a} $\delta_0 \geq 2\cdot n!+1$,
\item b) \label{dist_alpha_cas_b} $\delta_1 \geq 4n$.
\end{list}
By~(\ref{ie_Qab}) we have
\begin{equation} \label{ordQ_gt_abm1}
\ordz Q_{(a,b)}(\ullt{f}) \geq
\lfloor\frac{1}{t!}(a+1)(b+1)^t\rfloor > \frac{1}{t!}(a+1)(b+1)^t -1.
\end{equation}
In the case~\ref{dist_alpha_cas_a} we proceed as follows. First, in this case~\eqref{ordQ_gt_abm1} can be rewritten as
\begin{equation}
\ordz Q_{(a,b)}(\ullt{f}) > \frac{1}{t!}\left(\delta_0(\delta_1+1)^t-t!\right)
\end{equation}
and in order to show~(\ref{dist_alpha_est_ab}) it is sufficient to verify
\begin{equation} \label{dist_alpha_suffit_cas_a}
2\delta_0(\delta_1+1)^t-2\cdot t!\geq (\delta_0+1)(\delta_1+1)^t.
\end{equation}
The latter inequality is obvious for $\delta_0 \geq 2\cdot n!+1\geq 2\cdot t!+1$ (and $\delta_1 \geq 0$).

In the case~\ref{dist_alpha_cas_b} the same procedure brings us to the point where it is sufficient to verify
(instead of~(\ref{dist_alpha_suffit_cas_a}))
\begin{equation} \label{dist_alpha_suffit_cas_b}
2(\delta_0+1)\delta_1^t-2\cdot t! \geq (\delta_0+1)(\delta_1+1)^t.
\end{equation}
We can rewrite this inequality as
\begin{equation} \label{ie_dist_alpha_suffit_cas_b}
\left(2\left(\frac{\delta_1}{\delta_1+1}\right)^t-1\right)(\delta_0+1) \geq \frac{2\cdot t!}{(\delta_1+1)^t}.
\end{equation}
 The l.h.s. of~(\ref{ie_dist_alpha_suffit_cas_b}) is an increasing function of $\delta_0$ and $\delta_1$, and the r.h.s. of~(\ref{ie_dist_alpha_suffit_cas_b}) is a decreasing function of $\delta_1$. So it is sufficient to verify this inequality for $\delta_0=0$ and $\delta_1=4t\leq 4n$. We can directly calculate
\begin{equation}
2\left(\frac{4t}{4t+1}\right)^t(0+1) > 1/2 > \frac{2\cdot t!}{(4t+1)^t},
\end{equation}
hence~(\ref{dist_alpha_suffit_cas_b}) is true for all the values $\delta_0 \geq 0$, $\delta_1 \geq 4n$. This completes the proof of~(\ref{dist_alpha_est_ab}).

We define
\begin{equation*}
\b{Q}(X_0,...,X_n)=Q_{(a,b)}(1,\b{z},X_0,...,X_n)^q,
\end{equation*}
where $q = \lceil 2\cdot t! \tilde{C} \rceil$; therefore we have
$\ordz Q_{(a,b)}(1,\b{z},1,\b{f}_1,...,\b{f}_n)^q \geq \tilde{C} (\delta_0+1)(\delta_1+1)^t$. As the polynomial $Q_{(a,b)}$ was constructed in a way to satisfy $Q_{(a,b)}(\ul{f})\ne 0$, we have $\b{Q}(f_1,\dots,f_n) \ne
0$, hence $Q\not\in\idpf$.

It is easy to verify
\begin{equation*}
\begin{aligned}
&h(\b{Q})\leq\deg_{\ul{X}'}Q_{(a,b)} = a,\\
&\deg_{\ul{X}}\b{Q}=\deg_{\ul{X}}Q_{(a,b)} = b.
\end{aligned}
\end{equation*}
We recall that in the statement we have introduced notation $Z=Z_C(P)$. One obviously has
\begin{equation*}
\deg\b{Z} \geq 1
\end{equation*}
and, as $\b{Z}$ is  not defined over $\kk$ (see Remark~\ref{rem_LdT_Z_nonisotrivial}), one has
\begin{equation*}
h(\b{Z}) \geq 1.
\end{equation*}
In view of~(\ref{dist_alpha_choix_ab}) we obtain that the polynomial $\b{Q}_{(a,b)}$ makes the quantity~(\ref{dist_alpha_q}) strictly smaller than the minimum realized by $\b{P}_0$. So $Q_{(a,b)}$ (and hence $Q$) can not vanish
on $\b{Z}$ (by the definition of $\b{P}_0$); in other words: $\b{Q}$ does not belong to $\I(\b{Z})$.

We apply Theorem~4.11 of chapter~3 of~\cite{NP} to
polynomial $\b{Q}(X_0,...,X_n)$
and to the ideal $\I(\b{Z})$, which is 0-dimensional over $\kk(\b{z})$.

Let $\ull{\alpha}\in\b{Z}$ realize the maximum of $\Ord(\cdot,\ull{f})$
for points of $\b{Z}$; in other words: let
$\Ord(\ull{f},\ull{\alpha})=\max_{\beta \in
\b{Z}}\Ord(\ull{f},\ull{\beta})$.

We define
\begin{equation} \label{def_theta}
\theta=\left\{ \begin{aligned} \Ordz \b{Q}(\ull{f}) &\mbox{, if }
\Ord(\ull{f},\ull{\alpha})>\Ordz \b{Q}(\ull{f})\\
\Ord_{\ull{f}} \I(\b{Z}) &\mbox{, if }
\Ord(\ull{f},\ull{\alpha}) \leq \Ordz \b{Q}(\ull{f})
\end{aligned}
\right.
\end{equation}

By Theorem~4.11 of chapter~3 of~\cite{NP} one has
\begin{equation} \label{thetaMaj}
\theta \leq h(\b{Q})\deg(\I(\b{Z}))+h(\I(\b{Z}))\deg(\b{Q})
\end{equation}
(in our case the base field is $\kk(\b{z})$ and all its valuations
are non-archimedean ones, so $\nu=0$ and the term $\nu m^2
\deg(\I(\b{Z})) \deg(\b{Q})$ is equal to zero in the statement of this theorem).

We claim that the inequality
\begin{equation} \label{dist_alpha_ie1}
\Ord(\ull{f},\ull{\alpha}) \leq \Ordz \b{Q}(\ull{f})
\end{equation}
is in fact impossible.

Indeed, in this case $\theta=\Ord_{\ull{f}} \I(\b{Z})$, so~(\ref{thetaMaj}) implies
\begin{equation*} 
\Ord_{\ull{f}} \I(\b{Z}) \leq q \delta_0 \deg(\b{Z}) + q \delta_1 h(\b{Z}),
\end{equation*}
and we can weaken this inequality
\begin{equation*} 
\Ord_{\ull{f}} \I(\b{Z}) \leq \frac{q}{\min(\nu_0,\mu)}\left(\nu_0\delta_0 \deg(\b{Z}) + \nu_1\delta_1\deg(\b{Z}) + \mu\delta_1 h(\b{Z})\right).
\end{equation*}
Using the definition of $\delta_0$ and $\delta_1$ we deduce
\begin{equation} \label{majorationZf}
\begin{aligned}
\Ord_{\ull{f}} \I(\b{Z}) &\leq \frac{q}{\min(\nu_0,\mu)}\Big(\nu_0 h(\b{P}_0) \deg(\b{Z})\\&\qquad\qquad\qquad\qquad + \nu_1\deg\b{P}_0\deg(\b{Z}) + \mu\deg\b{P}_0h(\b{Z})\Big)\\
&\leq \frac{q}{\min(\nu_0,\mu)}\Big(\nu_0 h(\b{P}) \deg(\b{Z})\\&\qquad\qquad\qquad\qquad + \nu_1\deg\b{P}\deg(\b{Z}) + \mu\deg\b{P}h(\b{Z})\Big).
\end{aligned}
\end{equation}
Further, as $\b{P}$ vanishes on $\b{Z}$ one has
\begin{multline*}
 \nu_0 h(\b{P}_0) \deg(\b{Z}) + \nu_1\deg\b{P}_0\deg(\b{Z}) + \mu\deg\b{P}_0h(\b{Z}) \\ \leq \nu_0 h(\b{P}) \deg(\b{Z}) + \nu_1\deg\b{P}\deg(\b{Z}) + \mu\deg\b{P}h(\b{Z})
\end{multline*}
by the minimality from the definition of  $\b{P}_0$.
Then, applying~(\ref{LdTordZ}) (recall our notation $\b{Z}=\b{Z}_C(\b{P})$), one has
\begin{equation*}
\Ord_{\ull{f}}\I(\b{Z})> C^{\frac{1}{t}}c_n^{-\frac{t}{t+1}} \left(\nu_0 h(\b{P}) \deg(\b{Z}) + \nu_1\deg\b{P}\deg(\b{Z}) + \mu\deg\b{P}h(\b{Z})\right)
\end{equation*}
and gluing this inequality with~(\ref{majorationZf}) we obtain
\begin{multline*}
 C^{\frac{1}{t}}c_n^{-\frac{t}{t+1}} \left(\nu_0 h(\b{P}) \deg(\b{Z}) + \nu_1\deg\b{P}\deg(\b{Z}) + \mu\deg\b{P}h(\b{Z})\right)\\ < \frac{q}{\min(\nu_0,\mu)}\left(\nu_0 h(\b{P}) \deg(\b{Z}) + \nu_1\deg\b{P}\deg(\b{Z}) + \mu\deg\b{P}h(\b{Z})\right).
\end{multline*}
Simplifying $\nu_0 h(\b{P}) \deg(\b{Z}) + \nu_1\deg\b{P}\deg(\b{Z}) + \mu\deg\b{P}h(\b{Z})$ we deduce inequality
\begin{equation*}
 3\cdot t!\tilde{C}=C^{\frac{1}{t}}\min(\nu_0,\mu)c_n^{-\frac{t}{t+1}} < q = \lceil 2\cdot t!\tilde{C} \rceil
\end{equation*}
which contradicts the definition of~$q$ and $\tilde{C} \geq 1$ (recall that $\tilde{C}$ is defined at the end of the statement of this theorem and $\tilde{C} \geq 1$ in view of~(\ref{dist_alpha_Cestgrande})). So the inequality~(\ref{dist_alpha_ie1}) is impossible.

Thus unavoidably one has
\begin{equation*}
\Ord(\ull{f},\ull{\alpha})>\ordz \b{Q}(\ull{f}).
\end{equation*}
By construction of $\b{Q}$ one has $\ordz \b{Q}(\ull{f}) > \tilde{C} (\delta_0+1)(\delta_1+1)^t$, so we
deduce
\begin{equation*}
\Ord(\ull{f},\ull{\alpha}) > \tilde{C} (\delta_0+1)(\delta_1+1)^t.
\end{equation*}
\end{proof}

\section{Principal result} \label{section_principal result}

In this section we introduce the main result of this paper 
and prove it.

Recall that general framework imposed for this article is given in subsection~\ref{subsection_general_framework}. So, we have an algebraically closed field $\kk$, a polynomial ring $\A=\kk[X_0',X_1',X_0,\dots,X_n]$ bi-graduated with respect to $\left(\deg_{\ul{X}'},\deg_{\ul{X}}\right)$, a point
\begin{equation*}
\ul{f}=\left(1:z,1:f_1(z):\dots:f_n(z)\right)
\end{equation*}
and a map $\phi:\A\rightarrow\A$ satisfying properties~(\ref{degphiQleqdegQ}) and~(\ref{condition_T2_facile}). We also recall the notation
\begin{equation} \label{def_r}
     t:=t_{\ul{f}}=\trdeg_{\kk(z)}\kk\left(f_1(z),\dots,f_n(z)\right).
\end{equation}
In the statement below as well as in the subsequent considerations we use various notions defined in subsections~\ref{definitions_comm_algebra} and~\ref{definitions_multiprojective_dg}. In particular, $m(I)$ (as well as $V_i$ and $e_{\phi}$) is defined in Definition~\ref{definDePP}, $i_0$ is defined in Definition~\ref{def_i0} and $\ord_{\ul{f}}$ is defined in Definition~\ref{defin_ord_xy}.
\begin{theorem}[Formal multiplicity lemma]\label{LMGP} Let $\kk$, $\A$, $\ul{f}$ and $\phi$ be as above. Assume that the map $\phi$ is $\ul{f}$-admissible.
Let $n_1\in\{1,\dots,n\}$ and $C_0, C_1\in\mrr^+$. We denote by $\cK_{n_1}$ the set of all equidimensional bi-homogeneous ideals $I\subset\AnneauDePolynomes$ of rank $\geq n_1$, such that $\idp_{\ul{f}}\subsetneq I$, $\ul{f}\not\in\V(I)$ and $m(I)\leq C_m$ (recall that the constant $C_m$ is introduced in Definition~\ref{def_Cm}),
and moreover such that all its associated prime ideals satisfy
\begin{equation} \label{theoLMGP_condition_ordp_geq_C0}
\ord_{\ull{f}}\idq \geq C_0.
\end{equation}
Assume also that $\ul{f}$ has the $\left(\phi,\cK_{n_1}\right)$-property (see Definition~\ref{def_weakDproperty}).

Then there exists a constant $K>0$ such that for all $P \in
\AnneauDePolynomes$, satisfying $P(1,z,1,f_1(z),\dots,f_n(z))\ne 0$ and for all $C\geq C_1$
\begin{equation}\label{condition_n1}
    i_0(\Z_C(P))\geq n_1
\end{equation}
(recall that the cycle $\Z_C(P)$ is introduced in Remark~\ref{genZP} and $i_0$ in the Definition~\ref{def_i0}),
satisfy also
\begin{equation} \label{LdMpolynome2}
\ordz(P(\ullt{f})) \leq K\left((\mu+\nu_0)(\deg_{\ul{X}'}P+1)+\nu_1\deg_{\ul{X}}P\right)\\ \times\mu^{n-1}(\deg_{\ul{X}} P + 1)^t.
\end{equation}
\end{theorem}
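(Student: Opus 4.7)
The plan is to proceed by contradiction. Suppose $P \in \AnneauDePolynomes \setminus \idpf$ satisfies $i_0(\Z_C(P)) \geq n_1$ for some $C \geq C_1$, yet $\ordz P(\ull{f})$ exceeds the right-hand side of~\eqref{LdMpolynome2} for a very large constant $K$ to be fixed; I choose $C$ of the same order of magnitude as the supposed lower bound on $\ordz P(\ull{f})$. Then the hypotheses of Theorem~\ref{dist_alpha} (together with Remarks~\ref{genZP}--\ref{rem_LdT_Z_nonisotrivial}) are satisfied, and that theorem produces an algebraic point $\ull{\alpha} \in \b{Z}_C(\b{P}) \subseteq \V(\idpf)$, defined over $\kk(z)$, with
\[
\Ord(\ull{f}, \ull{\alpha}) > \tilde{C}(\delta_0 + 1)(\delta_1 + 1)^t,
\]
where $\tilde{C} \to \infty$ as $C \to \infty$. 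The bi-projective cycle associated with $\ull{\alpha}$ via Remark~\ref{genZP} has prime ideal $\idp$ of rank $n$, satisfies $\idpf \subsetneq \idp$, and $\ull{f} \notin \V(\idp)$; here $\delta_i := \delta_i(\idp, \ull{f})$ from Definition~\ref{def_delta}.

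Next I construct a $\phi$-stable ideal in $\cK_{n_1}$. Let $V_i := V_i(\idp, \ull{f})$ and $I_i := I(V_i, \idp)$ as in Definitions~\ref{V_irho_i}--\ref{def_I}. Corollary~\ref{cor1_deltainfty}, applied with $Z = \V(\idp)$, forces $\delta_0 \geq \max\bigl(2, \tfrac{2\nu_1}{\max(\mu, \nu_0)}\bigr)$ or $\delta_1 \geq 2^n$ for $C$ large, so Lemma~\ref{LemmeCor14NumberW} applies and delivers $m(I_i) \leq C_m$. The recursive definition of the $\rho_i$ in Definition~\ref{V_irho_i}, paired with the bi-degree estimates of Lemma~\ref{majorationphinQ}, is calibrated so that $\phi^j(V_i) \subseteq V_{i+1}$ for every $j \leq C_m + 1$. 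Hence whenever $e_\phi(V_i, \idp) \leq C_m = m(I_i)$, the subspace $V_i + \phi(V_i) + \cdots + \phi^{C_m + 1}(V_i) \subseteq V_{i+1}$ realizes a strictly larger rank in $\AnneauDePolynomes_{\idp}$, giving $\rg I_{i+1} > \rg I_i$. Starting from the hypothesis $i_0 \geq n_1$, which gives $\rg I_{n_1} \geq n_1 + r_{\ull{f}}$, rank growth cannot continue indefinitely since $\rg \idp = n$; thus at some index $i^* \in \{n_1, n_1 + 1, \ldots, t+1\}$ one must have $e_\phi(V_{i^*}, \idp) > m(I_{i^*})$. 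Lemma~\ref{LemmeProp13} then produces an equidimensional $\phi$-stable ideal $J$ with $\rg J = \rg I_{i^*} \geq n_1$, $m(J) \leq C_m$, all associated primes contained in $\idp$, $\idpf \subsetneq J$, and $\ull{f} \notin \V(J)$. For $C$ sufficiently large so that $\Ord(\ull{f}, \ull{\alpha}) \geq C_0$, every associated prime $\idq$ of $J$ satisfies $\ord_{\ull{f}}(\idq) \geq C_0$ via $\idq \subseteq \idp$, and therefore $J \in \cK_{n_1}$.

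The $(\phi, \cK_{n_1})$-property now furnishes an associated prime $\idq$ of $J$ with
\[
\ord_{\ull{f}}(\idq) < K_0\bigl(\dd_{(0, n - \rg\idq + 1)}(\idq) + \dd_{(1, n - \rg\idq)}(\idq)\bigr).
\]
B\'ezout's theorem (Lemma~\ref{lemma_BT}) applied to $J$, which is generated over $\idpf$ by at most $\rg J - r_{\ull{f}}$ bi-homogeneous polynomials of bi-degree bounded by $\rho_{i^*}\bigl(\delta_0 + \tfrac{\nu_1}{\max(\mu, \nu_0)}\delta_1, \delta_1\bigr)$, bounds the right-hand side by a polynomial of total degree $t$ in $(\delta_0 + 1, \delta_1 + 1)$ with coefficients depending only on the structural data $\mu, \nu_0, \nu_1, K_0, \rho_{n+1}$ and on the bi-degrees of $\idpf$. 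On the other hand, $\idq \subseteq \idp$ gives $\ord_{\ull{f}}(\idq) \geq \Ord(\ull{f}, \ull{\alpha}) > \tilde{C}(\delta_0 + 1)(\delta_1 + 1)^t$. Choosing $C$, and hence $\tilde{C}$, sufficiently large relative to these structural constants, the lower bound exceeds the upper bound, producing the desired contradiction. Tracing the quantitative dependence of $C$ on the supposed $\ordz P(\ull{f})$ yields~\eqref{LdMpolynome2} with an explicit $K$.

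The main obstacle will be the verification $\phi^j(V_i) \subseteq V_{i+1}$ for $j \leq C_m + 1$: this is the combinatorial heart of the argument, where the rapidly growing constants $\rho_{i+1}$ — carrying an exponent of order $\max(\mu, \nu_0)^{6^{n+2}(n+2)^{(n+1)^2}\rho_i^{n+1}}$ — must absorb the bi-degree inflation produced by iterating $\phi$, including the mixed contribution $\nu_1 \sum_{k} \nu_0^{j-k-1}\mu^k$ to $\deg_{\ul{X}'}$ when $\nu_1 > 0$. The asymmetric shift by $\nu_1\delta_1/\max(\mu, \nu_0)$ in the definition of $V_i$ is precisely what keeps this bookkeeping uniform across the cases $\nu_1 = 0$ and $\nu_1 > 0$, and the intertwined B\'ezout estimate in the final step must also respect this asymmetry in order for the degree exponent $t$ on the right-hand side of~\eqref{LdMpolynome2} to come out sharp.
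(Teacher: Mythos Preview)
Your overall architecture matches the paper's: assume the multiplicity bound fails, invoke the transference machinery to produce a prime $\idp=\I(\Z_C(P))$ with a point $\ull{\alpha}$ very close to $\ull{f}$, then either obtain a $\phi$-stable ideal $J\in\cK_{n_1}$ via Lemma~\ref{LemmeProp13} and contradict the $(\phi,\cK_{n_1})$-property using B\'ezout and Theorem~\ref{dist_alpha}, or else show the rank of $I(V_i,\idp)$ must keep growing, which is impossible.

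There is, however, a genuine gap in your rank-growth step. You assert that ``the recursive definition of the $\rho_i$ \ldots\ paired with the bi-degree estimates of Lemma~\ref{majorationphinQ}, is calibrated so that $\phi^j(V_i)\subseteq V_{i+1}$ for every $j\leq C_m+1$'', and you describe this as ``combinatorial''. But the space $V_{i+1}$ (Definition~\ref{V_irho_i}) consists, apart from $\idp_{\ul f}$, of polynomials that \emph{vanish on the cycle} $\Z_C(P)$, i.e.\ lie in $\idp$. Degree bookkeeping controls only the bi-degree of $\phi^j(Q)$; it says nothing about whether $\phi^j(Q)\in\idp$. For a general $\ul f$-admissible $\phi$ (a derivation, say), there is no reason whatsoever that $\phi$ should preserve $\idp$: the ideal $\idp$ is not assumed $\phi$-stable, and indeed the whole point of the argument is to locate a $\phi$-stable ideal nearby. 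So the inclusion $\phi^j(V_i)\subseteq V_{i+1}$ simply fails in general on purely algebraic grounds.

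In the paper this is exactly what Proposition~\ref{PropositionLdMprincipal} supplies, and the mechanism is analytic, not combinatorial. One takes $Q\in\phi^{e_0}(V_{i_0})\subset\idp$ and shows that if $\phi(Q)$ did \emph{not} vanish on $\b Z_C(P)$, then Liouville's inequality (Lemma~\ref{lemma_Liouville_ie}) bounds $\sum_{\beta\in\b Z}\ordz\phi(Q)(\ull\beta)$ above by a quantity of size roughly $\rho_{i_0+1}\bigl(\nu_0\delta_0\deg\b Z+\nu_1\delta_1\deg\b Z+\mu\delta_1 h(\b Z)\bigr)$, whereas the closeness of each $\ull\alpha\in\b Z$ to $\ull f$ together with \eqref{condition_T2_facile} forces this same sum to exceed $\min(1,\lambda)\sum_\alpha\Ord(\ull f,\ull\alpha)$, which by \eqref{LdTordZ} is at least $C^{1/t}c_n^{-t/(t+1)}$ times the same degree expression. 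For $C$ large this is a contradiction, so $\phi(Q)\in\idp$ after all. Only then, with both the vanishing and the degree control in hand, does one get $\phi^{e_0+1}(V_{i_0})\subset V_{i_0+1}$ and hence the rank increase that contradicts the maximality of $i_0$. Your proposal omits this Liouville step entirely; without it the argument does not go through. The paper also organises the dichotomy slightly differently (a single case split at $i=i_0(\Z_C(P))$ rather than an iteration over $i$), but that is cosmetic; the missing Liouville argument is the substantive issue.
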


\begin{remark} \label{rem_n1}
Condition~(\ref{condition_n1}) is tautologically true with parameters $n_1=1+r_{\ul{f}}$ and $C_1=0$ (in view of the definition of $i_0(Z(P))$, see Definition~\ref{def_i0} and Remark~\ref{rem_i0}).
Using this choice of $n_1$ and $C_1$ and 
enlarging class $\cK_{n_1}$ to $\cK_{1+r_{\ul{f}}}$ we obtain the statement of Theorem~\ref{intro_LMGP}.
\end{remark}
\begin{remark}
Parameters $n_1$ and $C_1$ are introduced because in certain situations it is possible to provide direct lower estimate of $i_0(Z(P))$ better than 1 (see Remark~\ref{rem_i0}), so excluding the necessity of analysis of $\phi$-stable ideals of a small codimension. Sometimes it could appear a decisive step, e.g. see proof of Proposition~4.11 in~\cite{EZ2010}.
\end{remark}



We deduce Theorem~\ref{LMGP} at the end of this section as a consequence of Lemma~\ref{LemmeProp13} and the following Proposition~\ref{PropositionLdMprincipal}.

\begin{proposition} \label{PropositionLdMprincipal}
Let $P \in \AnneauDePolynomes$
and $C\in\mrr$ satisfy
$$
P(1,z,1,f_1(z),\dots,f_n(z))\ne 0
$$
and:
\begin{align}\label{C_bornee_enP}
C &< \frac{\ordz (P \circ \ullt{f}) - (\deg_{\ul{X}}P) \ordz (\ull{f})-\deg_{\ul{X}'}P}{t\left((\nu_0+\mu)(\deg_{\ul{X}'}P+1)+\nu_1\deg_{\ul{X}}P\right)\mu^{t-1}(\deg_{\ul{X}}P+1)^{t}}\\
C &\geq \left(\min(\nu_0,\mu)\right)^{-t}. \label{C_minore_numu}
\end{align}
Let $\idp$ be the ideal defined as $\idp=\I(\Z_C(P))$, where $\Z_C(P)$ is the cycle introduced in Remark~\ref{genZP}.
Assume that for $i=i_0(\Z_C(P))$ one has
\begin{equation} \label{majoration_e_par_m}
e_{\phi}(V_i(\idp),\idp) \leq m(I(V_i(\idp),\idp)).
\end{equation}
Then,
\begin{equation}\label{Cestsmall}
    C \leq \frac{(2\rho_{n+1})^tc_n^{t}}{\min(1;\lambda)^t\min(1;\mu)^t}.
\end{equation}
Moreover, for all polynomials $P\in\AnneauDePolynomes$, one has
\begin{equation} \label{estimationP}
\begin{aligned}
 \ordz &P(\ullt{f}(\b{z}))\leq\max\left(\frac{t}{\left(\min(\nu_0,\mu)\right)^{t}},\frac{(2\rho_{n+1})^tc_n^{t}}{\min(1;\lambda)^t\min(1;\mu)^t}\right)\\
 &\times\left((\mu+\nu_0)(\deg_{\ul{X}'}P+1)+\nu_1\deg_{\ul{X}}P\right)\mu^{t-1}(\deg_{\ul{X}} P+1)^t\\
 &+(\ordz \ull{f})(\deg_{\ul{X}} P)+\deg_{\ul{X}'}P.
\end{aligned}
\end{equation}
\end{proposition}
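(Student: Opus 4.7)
The plan is to show that under the hypothesis $e_\phi(V_{i_0}(\idp),\idp) \le m(I(V_{i_0}(\idp),\idp))$ the parameter $C$ cannot be arbitrarily large, by producing a rank increase that contradicts the maximality of $i_0$. First I would invoke Theorem~\ref{LdT} on $P$ --- hypothesis~(\ref{C_bornee_enP}) is precisely the form~(\ref{ordPplusqueb2}) required --- which produces the 0-dimensional cycle $\b{Z}_C(P) \subset \mpp^n_{\ol{\kk(z)}}$ together with its 1-dimensional bi-projective incarnation $\Z_C(P) \subset \mpp^1 \times \mpp^n$ (Remark~\ref{genZP}). Setting $\idp := \I(\Z_C(P))$, I obtain a bi-homogeneous prime ideal of $\A$ of rank $n$, with the degree bound~(\ref{LdTdegZ}) and closeness bound~(\ref{LdTordZ0}) on its zero locus.

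Next, hypothesis~(\ref{C_minore_numu}) together with~(\ref{C_bornee_enP}) secures the prerequisites~(\ref{dist_alpha_Cestgrande}) for Theorem~\ref{dist_alpha}, which then provides a point $\ul{\alpha} \in \b{Z}_C(P)$ satisfying $\Ord(\ul{f},\ul{\alpha}) > \tilde{C}(\delta_0+1)(\delta_1+1)^t$, together with a bi-homogeneous polynomial $P_0$ realizing the minimal bi-degree $(\delta_0,\delta_1) = (\delta_0(\idp,\ul{f}),\delta_1(\idp,\ul{f}))$ of Definition~\ref{def_delta}. Because $\rho_1 = 1$, the polynomial $P_0$ lies in $V_1(\idp,\ul{f}) \subset V_{i_0}(\idp,\ul{f})$, anchoring the subsequent iterative construction.

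The heart of the argument combines two bounds. By Lemma~\ref{LemmeCor14NumberW}, $m(I(V_{i_0},\idp)) \le C_m$, so the hypothesis forces $e_\phi(V_{i_0},\idp) \le C_m$. Unpacking Definition~(\ref{definDePP_defin_e}), the ideal generated in $\A_\idp$ by $V_{i_0}+\phi(V_{i_0})+\dots+\phi^{C_m+1}(V_{i_0})$ has rank strictly greater than $\rg(V_{i_0}\A_\idp) \geq i_0+r_{\ul{f}}$. By Lemma~\ref{majorationphinQ}, the bi-degrees of the $\phi^j$-iterates for $j \le C_m+1$ are bounded by $\mu^{C_m+1}\rho_{i_0}\delta_1$ in $\ul{X}$ and by a correspondingly bounded quantity in $\ul{X}'$ involving $\nu_0^{C_m+1}$ and the geometric sum of the $\nu_1$ terms. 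The super-exponential recursion for $\rho_{i_0+1}$ in Definition~\ref{V_irho_i} is calibrated precisely so that these bounds are absorbed into the bi-degree budget of $V_{i_0+1}$; combined with the local correctness of $\phi$ at $\ul{f}$ (Definition~\ref{def_locally_correct}), which ensures that the primary components of $I(V_{i_0+1},\idp)$ contained in $\idp$ interact correctly with $\phi$-iteration, this forces $\rg(I(V_{i_0+1},\idp)) > \rg(I(V_{i_0},\idp)) \ge i_0+r_{\ul{f}}$, contradicting the maximality of $i_0$ (Definition~\ref{def_i0}).

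Quantitative tracing of the constants yields the explicit bound~(\ref{Cestsmall}): the factor $(2\rho_{n+1})^t$ records the $t$ successive levels of bi-degree absorption through the $\rho$-tower, $c_n^t$ comes from the transference estimates, and $\min(1;\lambda)^t\min(1;\mu)^t$ encodes the compound loss per $\phi$-iteration in order of vanishing via~(\ref{condition_T2_facile}) and in bi-degree via~(\ref{degphiQleqdegQ}). The estimate~(\ref{estimationP}) then follows by setting $C$ equal to the right-hand side of~(\ref{C_bornee_enP}): either $C < (\min(\nu_0,\mu))^{-t}$, contributing the first term of the maximum, or~(\ref{C_minore_numu}) holds and~(\ref{Cestsmall}) applies, contributing the second term; the additive correction $(\ordz \ul{f})(\deg_{\ul{X}}P) + \deg_{\ul{X}'}P$ is the translation of the numerator in~(\ref{C_bornee_enP}). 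The principal technical hurdle is the intricate coordination among the growth rates $(\mu,\nu_0,\nu_1)$ for bi-degree, $\lambda$ for order of vanishing, and the tower $\rho_i$, together with the verification that primary decomposition of $I(V_{i_0},\idp)$ respects $\phi$-iteration --- it is precisely the admissibility hypothesis on $\phi$ (Definition~\ref{def_admissible}), built around Definition~\ref{defin_phiestcorrecte}, that makes the bookkeeping close.
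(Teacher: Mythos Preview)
Your outline has a real gap at the core step. You correctly observe that $e_\phi(V_{i_0},\idp)\le C_m$ forces the ideal generated in $\A_\idp$ by $V_{i_0}+\phi(V_{i_0})+\dots+\phi^{e_\phi+1}(V_{i_0})$ to have rank strictly exceeding $\rg(V_{i_0}\A_\idp)$. But you then slide from this to $\rg(I(V_{i_0+1},\idp))>\rg(I(V_{i_0},\idp))$ by asserting that the bi-degrees of the $\phi^j$-iterates are absorbed into the budget of $V_{i_0+1}$. That is not enough: by Definition~\ref{V_irho_i}, $V_{i_0+1}$ consists (apart from $\idp_{\ul f}$) of polynomials that \emph{vanish on $\Z_C(P)$}. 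The $\phi$-iterates of elements of $V_{i_0}$ need not vanish on $\Z_C(P)$; nothing in admissibility or local correctness gives you $\phi(\idp)\subset\idp$ for $\idp=\I(\Z_C(P))$. Without that, the rank jump in $\A_\idp$ is vacuous (the generated ideal may simply be the whole localized ring) and cannot be pushed into $V_{i_0+1}$.

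This is precisely where the largeness of $C$ must enter, and your sketch never uses it except in the hand-wave ``quantitative tracing of the constants''. The paper's proof works as follows: assume $C$ exceeds the right-hand side of~(\ref{Cestsmall}); take $e_0$ maximal with $V_{i_0}+\dots+\phi^{e_0}(V_{i_0})\subset\idp$; for a generator $Q$ of $\phi^{e_0}(V_{i_0})$, use Lemma~\ref{Representants} and the triangle inequality to bound $\ordz(\phi(Q)(\ul\alpha))\ge\min(1,\lambda)\ordz(\ul\alpha\wedge\ul f)$ for every $\ul\alpha\in\b Z$; sum over $\b Z$ and compare with Liouville's inequality (Lemma~\ref{lemma_Liouville_ie}). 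The closeness estimate~(\ref{LdTordZ}) from the transference lemma, combined with the assumed largeness of $C$, then forces the Liouville bound to be violated unless $\phi(Q)$ vanishes identically on $\b Z$. That is how one obtains $\phi^{e_0+1}(V_{i_0})\subset\idp$, and only then does the rank argument against $i_0$ go through. Your use of Theorem~\ref{dist_alpha} here is also misplaced: that theorem isolates a single point $\ul\alpha$ close to $\ul f$, whereas the Liouville step requires summing over \emph{all} points of $\b Z$; Theorem~\ref{dist_alpha} belongs to the proof of Theorem~\ref{LMGP}, not to this proposition.
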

\begin{proof}
Note that if $\deg_{\ul{X}}P=0$, then the conclusion of the proposition 
is automatically satisfied. Thus we need only to treat the case $\deg_{\ul{X}}P\geq 1$.

{\it Ad absurdum} assume
\begin{equation} \label{cnstCgrande}
 C > \frac{(2\rho_{t+1})^tc_n^{t}}{\min(1;\lambda)^t\min(1;\mu)^t}.
\end{equation}

Recall that $i_0=i_0(\Z_C(P))\geq 1$ is
the largest index $i \in \{1,...,n\}$ such that $\rg(V_i\A_{\idp}) \geq i+r_{\ul{f}}$ (see Definition~\ref{def_i0}).
We put $e_0$ the largest integer $\leq e_{\phi}(V_{i_0},\idp)$
such that $V_{i_0}+...+{\phi}^{e_0}(V_{i_0})\subset\idp$ (we use the notation $V_{i_0}$ as a shorthand for $V_{i_0}(\idp)$). Note that the assumption~(\ref{majoration_e_par_m}) implies that
$e_{\phi}(V_{i_0},\idp)$ is finite, so $e_0$ is a well-defined integer.

\smallskip

Let $Q$ be a generator of ${\phi}^{e_0}(V_{i_0})$; by Lemma~\ref{majorationphinQ} one has
\begin{equation} \label{PropositionLdMprincipal_majoration_deg_Q}
\begin{aligned}
&\deg_{\ul{X}}Q \leq \mu^{e_0}\rho_{i_0}\delta_1(\idp),\\
&\deg_{\ul{X}'}Q \leq (\nu_0\delta_0(\idp)+e_0\nu_1\delta_1(\idp))\max(\nu_0,\mu)^{e_0-1}\rho_{i_0}.
\end{aligned}
\end{equation}

With the substitution $(X_0':X_1')=(1:\b{z})$ we can consider $Q$ as a polynomial from $\kk[\b{z}][X_0:...:X_n]$.

We denote $\b{Z}=\b{Z}_C(P)$. Let $\b{\alpha}\in\b{Z}$. By Lemma~\ref{Representants}, b), there is a system of projective coordinates $\ull{\alpha}$ satisfying
\begin{eqnarray*}
 \ordz\ull{\alpha} &=& \ordz\ull{f}, \\
  \ordz(\ull{\alpha} - \ull{f}) - \ordz(\ull{f}) &=& \Ordz(\b{\alpha},\b{f}).
\end{eqnarray*}
In view of $\ordz\ull{f}=0$, we deduce immediately
\begin{eqnarray}
  \ordz\ull{\alpha} &=& 0,\label{PropositionLdMprincipal_alpha_c1} \\
  \ordz(\ull{\alpha} - \ull{f}) &=& \ordz(\ull{\alpha}\wedge\ull{f}). \label{PropositionLdMprincipal_alpha_c2}
\end{eqnarray}

We fix a choice of projective coordinate systems satisfying~(\ref{PropositionLdMprincipal_alpha_c1}) and~(\ref{PropositionLdMprincipal_alpha_c2})
for all $\b{\alpha}\in\b{Z}$.

We claim that for any $\alpha\in Z$
\begin{equation} \label{point_Crucial}
 \begin{aligned}
  \ordz(\b{\phi}(\b{Q})(\ull{\alpha}))
	&\geq \min(\ordz(\b{\phi}(\b{Q})(\ull{f})),\ordz(\ull{\alpha}\wedge\ull{f})).
 \end{aligned}
\end{equation}
Indeed,
\begin{equation*}
\begin{aligned}
    \ordz&\left(\b{\phi}(\b{Q})(\ull{\alpha})\right)=\ordz\left(\left(\b{\phi}(\b{Q})(\ull{\alpha})-\b{\phi}(\b{Q})(\ull{f})\right)+\b{\phi}(\b{Q})(\ull{f})\right)\\
    &\geq\min\left(\ordz\left(\b{\phi}(\b{Q})(\ull{\alpha})-\b{\phi}(\b{Q})(\ull{f})\right),\ordz\left(\b{\phi}(\b{Q})(\ull{f})\right)\right)\\
    &\geq\min\left(\ordz\left(\ull{\alpha}-\ull{f}\right),\ordz\left(\b{\phi}(\b{Q})(\ull{f})\right)\right)\\
    &\geq\min\left(\ordz\left(\ull{\alpha}\wedge\ull{f}\right),\ordz\left(\b{\phi}(\b{Q})(\ull{f})\right)\right).
\end{aligned}
\end{equation*}
Then, by~(\ref{condition_T2_facile}) and in view of $\b{Q}(\ull{\alpha})=0$ (according to the choice of $e_0$),
\begin{equation} \label{point_Crucial_1}
 \begin{split}
  \ordz\left(\b{\phi}(\b{Q})(\ull{f})\right) &\geq \lambda \ordz\b{Q}(\ull{f})\\
	&\geq \lambda \ordz\big(\b{Q}(\ull{f})-\b{Q}(\ull{\alpha})\big)\\
	&\geq \lambda\ordz\left(\ull{\alpha}\wedge\ull{f}\right).
 \end{split}
\end{equation}
We deduce from~(\ref{point_Crucial}) and~(\ref{point_Crucial_1})
\begin{equation} \label{PropositionLdMprincipal_ie0}
 \ordz(\b{\phi}(\b{Q})(\ull{\alpha})) \geq \min(1,\lambda) \ordz(\ull{\alpha}\wedge\ull{f})
\end{equation}
for all $\b{\alpha}\in\b{Z}$.

By~(\ref{PropositionLdMprincipal_ie0}) one has
\begin{equation} \label{PropositionLdMprincipal_ie1_1}
 \begin{split}
  &\frac{1}{\min(1,\lambda)}\sum_{\ull{\alpha} \in \b{Z}_{C}(P)}\left(\ordz\left(\phi(\b{Q})(\ull{\alpha})\right)\right)
  \\&\geq\sum_{\ull{\alpha} \in \b{Z}_{C}(P)}\left(\ordz(\ull{\alpha}\wedge\ull{f})\right)=:M
 \end{split}
\end{equation}
(note that $M$ is equal to the l.h.s. of~(\ref{LdTordZ})). By definition of $\b{Z}_C(P)$ (see Definition~\ref{defZP} and Remark~\ref{genZP}) and with~(\ref{cnstCgrande}) we estimate
\begin{multline} \label{PropositionLdMprincipal_ie1}
  M>C^{\frac{1}{t}}c_t^{-\frac{t}{t+1}}\left(\nu_0\deg(\b{Z})\deg_{\b{z}}P+\nu_1\deg(\b{Z})\deg_{\ul{X}}P + \mu h(\b{Z})\deg_{\ul{X}}P\right)\\
	\geq \frac{2\rho_{n+1}}{\min(1,\lambda)\min(1,\mu)}\Big(\nu_0\deg(\b{Z})\deg_{\b{z}}P\\ + \nu_1\deg(\b{Z})\deg_{\ul{X}}P + \mu h(\b{Z})\deg_{\ul{X}}P\Big).
\end{multline}
We deduce from~(\ref{PropositionLdMprincipal_ie1_1}) and (\ref{PropositionLdMprincipal_ie1}) 
\begin{multline} \label{PropositionLdMprincipal_ie15}
  \sum_{\ull{\beta} \in \b{Z}_{C}(P)}\ordz\left(\phi(\b{Q})(\ul{\beta})\right)
	>\frac{2\rho_{n+1}}{\min(1,\mu)}\\ \times\left(\nu_0\deg(\b{Z})\deg_{\b{z}}P + \nu_1\deg(\b{Z})\deg_{\ul{X}}P + \mu h(\b{Z})\deg_{\ul{X}}P\right).
\end{multline}
Also Liouville's inequality~(\ref{iet_main})
implies (under assumption that $\b{\phi}(\b{Q})$ does not vanish on $\b{Z}_{C}(P)$)
\begin{equation} \label{PropositionLdMprincipal_ie2}
 \begin{split}
  &\sum_{\ull{\beta} \in \b{Z}_{C}(P)}\ordz\left(\phi(\b{Q})(\ul{\beta})\right)\leq\deg(\b{Z})h(\phi(\b{Q})) + h(\b{Z})\deg\phi(\b{Q})\\
  &\leq \max(\mu,\nu_0)^{e_0}\rho_{i_0}\left(\nu_0\deg(\b{Z})\delta_0 + \nu_1(e_0+1)\deg(\b{Z})\delta_1 + \mu h(\b{Z})\delta_1\right)\\
  &\leq \max(\mu,\nu_0)^{e_0}(e_0+1)\rho_{i_0}\left(\nu_0\deg(\b{Z})\delta_0 + \nu_1\deg(\b{Z})\delta_1 + \mu h(\b{Z})\delta_1\right)
 \end{split}
\end{equation}
(the second inequality in~\eqref{PropositionLdMprincipal_ie2} is implied by~(\ref{PropositionLdMprincipal_majoration_deg_Q})).

According to the definition of $e_0$, the hypothesis~(\ref{majoration_e_par_m}) and Lemma~\ref{LemmeCor14NumberW} we have
\begin{equation} \label{PropLdMprincipal_ie_e0_leq_m}
e_0 \leq e_{\phi}(V_{i_0},\idp) \leq m(I(V_{i_0},\idp)) \leq \nu(n+1)!\rho_{i_0}^{n+1},
\end{equation}
hence $\max(\mu,\nu_0)^{e_0}(e_0+1)\rho_{i_0}\leq \rho_{i_0+1} \leq \rho_{n+1}$ by definition of $\rho_{n+1}$.
Thus~(\ref{PropositionLdMprincipal_ie15}) and~(\ref{PropositionLdMprincipal_ie2}) lead to:
\begin{multline*}
    \frac{2\rho_{n+1}}{\min(1,\mu)}\left(\nu_0\deg(\b{Z})\deg_{\b{z}}P + \nu_1\deg(\b{Z})\deg_{\ul{X}}P + \mu h(\b{Z})\deg_{\ul{X}}P\right)\\
    <\rho_{n+1}\left(\nu_0\deg(\b{Z})\delta_0 + \nu_1\deg(\b{Z})\delta_1 + \mu h(\b{Z})\delta_1\right).
\end{multline*}
This inequality contradicts Definition~\ref{def_delta}, thus $\b{\phi}(\b{Q})(\ull{\alpha})=0$.

So we have
\begin{equation} \label{PropositionLdMprincipal_incl_e0p1}
{\phi}^{e_0+1}(V_{i_0}) \subset \idp,
\end{equation}
and this inclusion contradicts the definition of $e_0$ if $e_0 < e_{\phi}(V_{i_0},\idp)$. We conclude
$e_0=e_{\phi}(V_{i_0},\idp)$.

Moreover, (\ref{PropositionLdMprincipal_incl_e0p1}) implies
\begin{equation} \label{PropositionLdMprincipal_est_rg_rgP}
\rg\left((V_{i_0}+...+\phi^{e_0+1}(V_{i_0}))\AnneauDePolynomes_{\idp}\right)\leq\rg\left(\idp\AnneauDePolynomes_{\idp}\right)=n.
\end{equation}
As $e_0+1>e_{\phi}(V_{i_0},\idp)$ and by definition of $e_{\phi}(V_{i_0},\idp)$ we have
\begin{equation} \label{e0plus1Total}
\begin{split}
 \rg\left((V_{i_0}+...+\phi^{e_0+1}(V_{i_0}))\AnneauDePolynomes_{\idp}\right) > \rg(V_{i_0}\AnneauDePolynomes_\idp) \geq i_0,
\end{split}
\end{equation}
we obtain
\begin{equation*}
 \rg(V_{i_0+1}\AnneauDePolynomes_\idp)\geq\rg\left((V_{i_0}+...+\phi^{e_0+1}(V_{i_0}))\AnneauDePolynomes_{\idp}\right)\geq i_0+1.
\end{equation*}
If $i_0<n$ this inequality contradicts the definition of $i_0$ (Definition~\ref{def_i0}), and if $i_0=n$ inequality~(\ref{e0plus1Total})
implies
\begin{equation*}
\rg\left((V_{i_0}+...+\phi^{e_0+1}(V_{i_0}))\AnneauDePolynomes_{\idp}\right) > n,
\end{equation*}
in contradiction with~(\ref{PropositionLdMprincipal_est_rg_rgP}).

So, we have verified that the hypothesis~(\ref{cnstCgrande}) can not be satisfied, establishing therefore~(\ref{Cestsmall}).

It remains to verify~(\ref{estimationP}). We fix an arbitrary polynomial $P\in\A$ and consider the set $\mathcal{M}(P)$ of reals $C$ satisfying (with our choice of polynomial $P$) inequalities~(\ref{C_bornee_enP}) and~(\ref{C_minore_numu}).
If $\mathcal{M}(P)=\emptyset$, we have
\begin{equation*}
    \left(\min(\nu_0,\mu)\right)^{-n}\geq\frac{\ordz (P \circ \ullt{f}) - (\deg_{\ul{X}}P) \ordz (\ull{f})-\deg_{\ul{X}'}P}{n\left((\mu+\nu_0)(\deg_{\ul{X}'}P+1)+\nu_1\deg_{\ul{X}}P\right)\mu^{n-1}(\deg_{\ul{X}}P+1)^{n}}
\end{equation*}
obtaining immediately~(\ref{estimationP}).

In the opposite case, if $\mathcal{M}(P)\ne\emptyset$, we let $C_s$ denote the upper bound of $\mathcal{M}(P)$, which is a real finite number: in fact the inequality~(\ref{C_bornee_enP}) implies
\begin{equation*}
    C_s=\frac{\ordz (P \circ \ullt{f}) - (\deg_{\ul{X}}P) \ordz (\ull{f})-\deg_{\ul{X}'}P}{n\left((\mu+\nu_0)(\deg_{\ul{X}'}P+1)+\nu_1\deg_{\ul{X}}P\right)\mu^{n-1}(\deg_{\ul{X}}P+1)^{n}}.
\end{equation*}
In the first part of the proof we have established the inequality~(\ref{Cestsmall}) for all the elements of $\mathcal{M}(P)$, therefore $C_s$ also satisfies this
inequality, hence~(\ref{estimationP}).
\end{proof}


Now we are ready to prove the main result of this paper,  Theorem~\ref{LMGP}.

\begin{proof}[Proof of Theorem~\ref{LMGP}] We define
\begin{equation} \label{def_Ktwo}
K_2:=(n-r_{\ul{f}})\left(\deg_{(0,\dim\idp_{\ul{f}})}\idp_{\ul{f}}+\deg_{(1,\dim\idp_{\ul{f}}-1)}\idp_{\ul{f}}\right)
\\
\left(1+\frac{\nu_1}{\max(\mu,\nu_0)}\right)\rho_n^{n}
\end{equation}
and
\begin{equation} \label{Cestgrande}
 C=1+\max\Bigg(c_n^{t}C_0^{t}, \left(\min(\nu_0,\mu)\right)^{t}, \Ciso,\left(\frac{3n!c_n}{\min(\nu_0,\mu)}K_0K_2\right)^{t},C_1\Bigg)
\end{equation}
(recall that $c_n$ is defined in~(\ref{def_cn}), $C_0$, $C_1$ are introduced in the statement of Theorem~\ref{LMGP}) and the constant $K_0$ is implied by the $\left(\phi,\cK_{n_1}\right)$-property (this property is imposed in the statement of Theorem~\ref{LMGP} as well).

Let $P \in \AnneauDePolynomes\setminus\idpf$ be a polynomial that does not satisfy~(\ref{LdMpolynome2}) for
\begin{equation}\label{preuve_theo_defK}
    K=\max\left(2nC,\left(\frac{2\rho_{n+1}c_n}{\max(1,\lambda)\max(1,\mu)}\right)^t\right).
\end{equation}
Then it satisfies~(\ref{ordPplusqueb2}). In particular, $C$ and $P$ satisfy
hypothesis~(\ref{C_bornee_enP}) and~(\ref{C_minore_numu}) of Proposition~\ref{PropositionLdMprincipal}.

Define $\idp:=\I(\Z_C(P))$, where $\Z_C(P)$ is the cycle introduced in Remark~\ref{genZP}.
In view of~(\ref{LdTordZ}) and~(\ref{Cestgrande}), we have $\ord_{\ullt{f}}\idp > C_0$ and thus 
$\phi$ is correct with respect to $\idp$. Moreover, $\Z_C(P)$ is projected onto $\mpp^1$ (see Remark~\ref{rem_LdT_Z_nonisotrivial}).

Recall that $V_i=V_i(\Z_C(P))$ is introduced in Definition~\ref{V_irho_i} and
$e_{\phi}$, $m$ are introduced
in Definition~\ref{definDePP}: (\ref{definDePP_defin_e}) and (\ref{definDePP_defin_m}) respectively.
If for $i=i_0(\Z_C(P))$ we have
\begin{equation}\label{e_bornee}
    e_{\phi}(V_i,\idp) \leq m(I(V_{i},\idp)),
\end{equation}
we verify~(\ref{majoration_e_par_m}) and we can apply Proposition~\ref{PropositionLdMprincipal}.
 This proposition gives us~(\ref{LdMpolynome2}) in view of our choice of~$K$~\eqref{preuve_theo_defK}. This estimate contradicts our hypothesis that $P$ does not satisfy~(\ref{LdMpolynome2}).
On the other hand, if~(\ref{e_bornee}) is not satisfied, we apply Lemma~\ref{LemmeProp13} to the ideal $\idp$
and the $\kk$-linear space $V=V_i(\idp)$ (we recall the notation $i=i_0(\Z_C(P))$).

We denote by $J$ the equidimensional $\phi$-stable ideal provided by
Lemma~\ref{LemmeProp13}. In view of the property~$b)$ of this proposition we have
\begin{equation}\label{prove_LMGP_rkJ_eq_rkV_i}
    \rg(J)=\rg\left(I\left(V_i,\idp\right)\right)\geq i\geq n_1.
\end{equation}
The property~a) ensures $\ul{f}\not\in\V(J)$, because the ideal $I(V_{i},\idp)$ contains at least one polynomial that does not vanish at $\ul{f}$.

We verify (In view of~(\ref{LemmeProp13_en_part}))
\begin{equation}
\begin{aligned}
& m(J) \leq m(I(V,\idp)),\\
& \dd_{(0,n-\rg J+1)}(J) \leq \dd_{(0,n-\rg J+1)}(I(V_i,\idp)).
\end{aligned}
\end{equation}
As $\V(\idp)=Z_C(P)$ is projected onto $\mpp^1$ we have by Lemma~\ref{LemmeCor14NumberW}
$$
m(I(V,\idp)) \leq C_m
$$
and also $\delta_1(\idp) \geq 1$.

Recall that $I(V_i,\idp)\subset\idp$ and thus
\begin{equation*}
r:=\rg I(V_i,\idp) \leq \rg\idp = n.
\end{equation*}
As the ideal~$I(V_i,\idp) \subset \idp$ is extended-contracted of an ideal generated by polynomials of bi-degree
$\leq \left(\rho_i\left(\delta_0(\idp)+\frac{\nu_1}{\max(\mu,\nu_0)}\delta_1(\idp)\right), \rho_i\delta_1(\idp)\right)$ (see Definitions~\ref{def_I} and~\ref{V_irho_i}), we have by Lemma~\ref{lemma_BT}
\begin{multline} \label{est_basique1}
\dd_{(0, n-\rg I(V_i,\idp)+1)}(I(V_i,\idp))
\\
\leq(r-r_{\ul{f}})\left(\deg_{(1,\dim\idp_{\ul{f}}-1)}\idp_{\ul{f}}\right)\left(\delta_0(\idp)+\frac{\nu_1}{\max(\mu,\nu_0)}\delta_1(\idp)\right)
\\
\times
\delta_1(\idp)^{r-r_{\ul{f}}-1}\rho_i^{r-r_{\ul{f}}}
+\left(\deg_{(0,\dim\idp_{\ul{f}})}\idp_{\ul{f}}\right)\delta_1(\idp)^{r-r_{\ul{f}}}\rho_i^{r-r_{\ul{f}}}
\end{multline}
where we use the notation $r:=\rg I(V_i,\idp)$. 
Let's temporarily denote by $R(\delta_0,\delta_1)$ the r.h.s. of~\eqref{est_basique1}.
Using~\eqref{prove_LMGP_rkJ_eq_rkV_i} we infer
\begin{equation} \label{estdegW}
 \dd_{(0, n-\rg J+1)}J \leq R(\delta_0,\delta_1).
\end{equation}
As $J$ is an equidimensional ideal, we obtain for all $\idq\in\Ass(\AnneauDePolynomes/J)$
\begin{equation} \label{estdegQ}
\begin{aligned}
 \dd_{(0, n-\rg\idq+1)}\idq&\leq\dd_{(0, n-\rg J+1)}J\\
 &\leq R(\delta_0,\delta_1).
\end{aligned}
\end{equation}
The same calculation for $\dd_{(1, n-\rg\idq)}\idq$ gives us
\begin{equation} \label{estdeg1Q}
\begin{aligned}
 \dd_{(1, n-\rg\idq)}\idq
 \leq\rho_n^{n-r_{\ul{f}}}\left(\dd_{(1, n-\rg\idp_{\ul{f}})}\idp_{\ul{f}}\right)\left(\delta_1(\idp)+1\right)^{r-r_{\ul{f}}}.
\end{aligned}
\end{equation}
Summing up~\eqref{estdegQ} and~\eqref{estdeg1Q} we find, for every $\idq\in\Ass(\AnneauDePolynomes/J)$,
\begin{multline}
 \dd_{(0, n-\rg\idq+1)}\idq+\dd_{(1, n-\rg\idq)}\idq
\\
\leq R(\delta_0,\delta_1)+\rho_n^{n-r_{\ul{f}}}\left(\dd_{(1, n-\rg\idp_{\ul{f}})}\idp_{\ul{f}}\right)\left(\delta_1(\idp)+1\right)^{r-r_{\ul{f}}}
\\
\leq (r-r_{\ul{f}})\left(\deg_{(1,\dim\idp_{\ul{f}}-1)}\idp_{\ul{f}}\right)
\\
\times
\left(\delta_0(\idp)+\frac{\nu_1}{\max(\mu,\nu_0)}\delta_1(\idp)\right)
\delta_1(\idp)^{r-r_{\ul{f}}-1}\rho_i^{r-r_{\ul{f}}}
\\
+\left(\deg_{(0,\dim\idp_{\ul{f}})}\idp_{\ul{f}}+\deg_{(1,\dim\idp_{\ul{f}}-1)}\idp_{\ul{f}}\right)\delta_1(\idp)^{r-r_{\ul{f}}}\rho_i^{r-r_{\ul{f}}}
\\
\leq K_2\left(\delta_0(\idp)+1\right)\left(\delta_1(\idp)+1\right)^{n-r_{\ul{f}}},
\end{multline}
where $K_2$ is defined in~\eqref{def_Ktwo}. 

As $P$ and $C$ satisfy~(\ref{ordPplusqueb2}), by Lemma~\ref{dist_alpha} there exists a point $\ull{\alpha}\in\b{Z}_C(P)$
satisfying~(\ref{Cdirect}) with $\tilde{C} = \frac{C^{\frac{1}{t}}\min(\nu_0,\mu)}{3t!c_n}\geq K_0K_2$
(the last inequality is implied by the definition~(\ref{Cestgrande})),
and thus one has for all $\idq\in\Ass(\AnneauDePolynomes/J)$ (in view of Lemma~\ref{LemmeProp13}, point~\ref{LemmeProp13_c})
\begin{equation} \label{estordQ}
 \ord_{\ull{f}}\idq \geq \ord(\ull{f},\ull{\alpha}) > K_0K_2(\delta_0(\idp)+1)(\delta_1(\idp)+1)^{t_{\ul{f}}}.
\end{equation}
We recall that $r_{\ul{f}}$ and $t_{\ul{f}}$ are introduced in Definition~\ref{def_tf} and satisfy
$$
t_{\ul{f}}+r_{\ul{f}}=n.
$$

The estimates~(\ref{estdegQ}), (\ref{estdeg1Q})
and~(\ref{estordQ}) put together (and been verified for \emph{all} $\idq\in\Ass(\AnneauDePolynomes/J)$)
contradict the $\left(\phi,\cK\right)$-property assumed in the statement. So, the assumption~(\ref{ordPplusqueb2})
with $C$ given by~(\ref{Cestgrande}) is untenable and we deduce~(\ref{LdMpolynome2}) with our choice of~$K$.
It contradicts again our assumption that $P$ does not satisfy~(\ref{LdMpolynome2}).

Finally, we conclude that polynomial~$P$, which does not satisfy~(\ref{LdMpolynome2}), can not exist, and this completes the proof.
\end{proof}

\section{Applications}

Our Theorem~\ref{LMGP} extends the corresponding result from~\cite{EZ2010,EZ2011} to sets of functions $f_1(z),\dots,f_n(z)\in\kk[[z]]$ that possibly admit algebraic relations over $\kk(z)$. Once proving this generalization, we can immediately apply it to the cases of Mahler's functions and to the case of solutions of a differential system (with polynomial relations). In all these cases proofs can be transferred word for word from the case of algebraically independent functions, just replacing the references to formal multiplicity lemma with references to our Theorem~\ref{LMGP}.

In this section we only give statements of theorems and recall the corresponding frameworks. For proofs, we give references to the corresponding proofs in~\cite{EZ2010} and~\cite{EZ2011}.

\subsubsection*{Zero order estimates for functions satisfying functional equations of generalized Mahler's type}


Let $A_0,...,A_n$ be polynomials with coefficients in $\kk$ and satisfying $\deg_z A_i\leq s$, $\deg_{\ul{X}}A_i\leq q$. Let $p(z)$ be a rational  fraction with $\delta=\ord_{z=0} p({z})$  and $d:=\deg p(z)$.

We consider the following system of functional equations
\begin{equation} \label{relsTopfer}
{f_i}({p(z)}) =
\frac{A_i(z, f_1(z),...,f_n(z))}{A_0(z, f_1(z),...,f_n(z))},\quad i=1,\dots,n.
\end{equation}
Let $\T$ be a rational  map from $\mpp^1 \times \mpp^{n}$ to itself
defined by
\begin{multline} \label{defT2}
(X_0':X_1',X_0:...:X_n)\rightarrow\Big(A_0'(X_0',X_1'):A_1'(X_0',X_1'),\\
A_0(X_0',X_1',X_0,...,X_n):...:A_n(X_0',X_1',X_0,...,X_n)\Big),
\end{multline}
where $A_i'\in\kk[X_0',X_1']$, $i=0,1$, are homogeneous polynomials of degree $r$ in $\ul{X}'$ and $A_j\in\AnneauDePolynomes$, $j=0,...,n$,
are bi-homogeneous polynomials of bi-degree $(s,q)$ in $\ul{X}'$ and $\ul{X}$.

\begin{remark}\label{rem_Mutual_Association} We define $p(\b{z})=\frac{A_1'(1,\b{z})}{A_0'(1,\b{z})}$ and associate to every rational map $\T$ defined by~(\ref{defT2}) and such that $A_0$, $A_0'$
are non-zero polynomials,
a system of functional equations~(\ref{relsTopfer}):
\begin{equation} \label{relsTopfer2}
    A_0(\ullt{f}(\b{z}))f_i(p(\b{z}))=A_i(\ullt{f}(\b{z})), \quad i=1,...,n
\end{equation}
(where $\ullt{f}$ denotes $(1,\b{z},1,f_1(\b{z}),...,f_n(\b{z}))$). 

The other way around, is to start from the system~(\ref{relsTopfer}), then
formulae~(\ref{defT2}) define a morphism $\T:\mpp^1 \times \mpp^{n}\rightarrow\mpp^1 \times \mpp^{n}$.
\end{remark}

\begin{definition} \label{def_Mutual_Association}
We say that the morphism defined by~(\ref{defT2}) and the system~(\ref{relsTopfer2}) are mutually associated.
\end{definition}

\begin{definition}\label{def_irrT}
For any morphism $\T$ defined by~(\ref{defT2}), we denote by $\irrT$ the union of zero loci of the polynomial bi-homogeneous systems $A_i'(X_0',X_1',X_0,...,X_n)=0$, $i=0,1$, and $A_j(X_0',X_1',X_0,...,X_n)=0$, $j=0,...,n$. One has $\irrT \subset \mpp_{\kk}^1\times\mpp_{\kk}^n$ and this is a set of points where bi-projective application $\T$ is not well-defined (if $\irrT=\emptyset$ the map $\T$ is a regular bi-projective map).
\end{definition}

\begin{remark} \label{rem_TV_simplified}
To simplify the notation we write $\T(W)$ instead of $\T(W\setminus\irrT)$.
\end{remark}

\begin{definition}\label{definVarieteTstable}
We say that a sub-variety $W\subset\mpp^1\times\mpp^n$
is $\T$-stable, if
\begin{equation*}
 \ol{\T(W)}=W.
\end{equation*}
\end{definition}

\begin{remark} \label{varietestable_et_idealstable}
If a variety $W$ is $\T$-stable then the ideal $\I(W)$ is $\Talg$-stable, but the reciprocal statement is not true. The condition $\Talg(\I(W))\subset\I(W)$ geometrically means only that $W$ is a sub-scheme of $\T^{-1}(W)$. However, if we impose that the variety $W$ is irreducible and $\dim \T(W)=\dim W$, then
\begin{equation}
 \mbox{a variety $W$ is $\T$-stable} \Leftrightarrow \mbox{the ideal $\I(W)$ is $\Talg$-stable}.
\end{equation}
\end{remark}




\begin{theorem}\label{LMGPF} Let $\kk$ be a field of an arbitrary characteristic and
$\T: \mpp^1_{\kk}\times\mpp^n_{\kk} \rightarrow \mpp^1_{\kk}\times\mpp^n_{\kk}$ a rational dominant map defined as in~(\ref{defT2}),
by the homogeneous polynomials $A_i'$, $i=0,1$ in $\ul{X}'$ of degree $r$, and polynomials $A_i$, $i=0,\dots,n$ bi-homogeneous in $\ul{X}'$ and $\ul{X}$,
of bi-degree $(s,q)$.

Let $f_1(\b{z})$,...,$f_n(\b{z}) \in \kk[[\b{z}]]$
and $n_1\in\{1,\dots,n\}$, $C_1\in\mrr^+$.
We denote, as before, $\ul{f}=(1,\b{z},1,f_1(\b{z}),...,f_n(\b{z}))$.

Suppose moreover that there exists $\lambda \in \mrr_{>0}$, such that for all $Q\in\AnneauDePolynomes$
\begin{equation} \label{section_AB_IElambda}
\Ordz Q(\T(\ullt{f})) \geq \lambda \Ordz Q(\ullt{f}),
\end{equation}
and that there exists a
constant $K_0 \in \mrr^{+}$ (dependent on $\T$ and
$\ullt{f}$ only) such that for every positive integer
\begin{equation}\label{Nmajoration}
    N\leq C_m
\end{equation}
(where the constant $C_m$ is introduced in Definition~\ref{def_Cm})
every irreducible ${\T}^N$-stable variety $W\varsubsetneq\mpp^1\times\mpp^n$ (defined over the field $\kk$) of dimension $\dim W\leq n-n_1+1$
satisfies necessarily
\begin{equation} \label{RelMinN}
\ord_{\bt{f}}(W) < K_0\left(\dd_{(0,\dim W)}W+\dd_{(1,\dim W-1)}W\right).
\end{equation}
Then there exists a constant $K_1>0$ such that for all $P \in \AnneauDePolynomes\setminus\idp_{\ul{f}}$ satisfying for all $C\geq C_1$
\begin{equation}\label{condition_n1F}
    i_0(\Z_C(P))\geq n_1,
\end{equation}
satisfies also
\begin{equation} \label{LdMpolynome}
\ordz(P(\ullt{f})) \leq K_1(\deg_{\ul{X'}}P + \deg_{\ul{X}}P + 1)(\deg_{\ul{X}} P + 1)^n.
\end{equation}
\end{theorem}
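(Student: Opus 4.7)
\textbf{Proof plan for Theorem~\ref{LMGPF}.} The plan is to reduce the statement to the formal multiplicity lemma (Theorem~\ref{LMGP}) by taking $\phi := \T^*$, the pull-back morphism on $\A$ induced by the rational map $\T$ in~\eqref{defT2}. First I would verify that $\phi$ satisfies the structural hypotheses of Theorem~\ref{LMGP}. A direct bi-degree count from~\eqref{defT2} shows that~\eqref{degphiQleqdegQ} holds with $\mu = q$, $\nu_0 = r$, $\nu_1 = s$. The multiplicative order inequality~\eqref{condition_T2_facile} is just a rewriting of the assumption~\eqref{section_AB_IElambda}, since $\phi(Q)(\ul{f}) = Q(\T(\ul{f}))$ after homogeneous normalization. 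Finally, $\ul{f}$-admissibility (Definition~\ref{def_admissible}) follows from Corollary~2.25 of~\cite{EZ2011}, noted in Remark~\ref{remark_def_admissible}: $\T$ being dominant and non-degenerate at $\ul{f}$ (the relations~\eqref{section_AB_IElambda} force $\ul{f}\notin\irrT$), the algebraic morphism $\T^*$ is locally correct at $\ul{f}$.

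The main step is to extract from the hypothesis~\eqref{RelMinN} on $\T^N$-stable varieties the $\left(\phi,\cK_{n_1}\right)$-property of Definition~\ref{def_weakDproperty}. Let $I\in\cK_{n_1}$ be $\phi$-stable, so $I$ is equidimensional with $\rg I \geq n_1$, $\idp_{\ul{f}}\subsetneq I$, $\ul{f}\notin\V(I)$ and $m(I)\leq C_m$. Because $\T$ is dominant, $\phi=\T^*$ permutes the associated primes of any $\phi$-stable equidimensional ideal; since their number is at most $m(I)\leq C_m$, at least one orbit has length $N \leq m(I) \leq C_m$, giving an associated prime $\idq$ with $\phi^{N}(\idq)\subset\idq$. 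By Remark~\ref{varietestable_et_idealstable} (applied after checking $\dim\T(\V(\idq))=\dim\V(\idq)$, which holds because $\T$ is dominant) the irreducible variety $W:=\V(\idq)$ is $\T^N$-stable, has dimension $\dim W\leq n+1-\rg\idq\leq n-n_1+1$, and is a proper subvariety of $\mpp^1\times\mpp^n$ since $\ul{f}\notin W$. Hence the hypothesis~\eqref{RelMinN} applies to $W$ and yields exactly the bound~\eqref{def_RelMinN2} required of $\idq$.

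With $\phi$ admissible and the $\left(\phi,\cK_{n_1}\right)$-property established, Theorem~\ref{LMGP} produces a constant $K$ such that every $P\in\A\setminus\idp_{\ul{f}}$ verifying~\eqref{condition_n1} (which coincides with~\eqref{condition_n1F}) satisfies~\eqref{LdMpolynome2}. Substituting $\mu=q$, $\nu_0=r$, $\nu_1=s$ and using $t_{\ul{f}}\leq n$, the resulting bound is stronger than~\eqref{LdMpolynome} up to a multiplicative constant absorbing $r,s,q$ and $\mu^{n-1}$, which we fold into a final constant $K_1$. The main obstacle I expect to encounter is the orbit argument in the second paragraph, namely that $\T^*$ really does permute associated primes of an equidimensional $\phi$-stable ideal and that the iterate $N$ can be controlled by $m(I)$ rather than by an LCM of orbit lengths; this is what allows the cut-off $N\leq C_m$ in~\eqref{Nmajoration} to be sufficient, and it rests on dominance of $\T$ together with the definition of $m(I)$ in terms of local lengths. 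Once this combinatorial point is cleanly handled, the remainder is a routine invocation of Theorem~\ref{LMGP}.
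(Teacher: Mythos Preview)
Your proposal is correct and follows exactly the approach the paper intends: the paper does not give a self-contained proof of Theorem~\ref{LMGPF} but states that the argument transfers verbatim from~\cite{EZ2010,EZ2011} once references to the formal multiplicity lemma are replaced by Theorem~\ref{LMGP}, which is precisely the reduction you outline (setting $\phi=\T^*$, checking admissibility via Remark~\ref{remark_def_admissible}, and extracting the $(\phi,\cK_{n_1})$-property from the hypothesis on $\T^N$-stable varieties). One minor refinement to your orbit argument: you do not need $\T^*$ to \emph{permute} the associated primes, only to induce a self-map on the finite set of irreducible components of $\V(I)$ (via $W_j\mapsto\overline{\T(W_j)}$, which lands in some component because $\T$ is dominant and the ideal is equidimensional); any self-map of a set of size $\leq m(I)\leq C_m$ has a periodic point with period $N\leq C_m$, which is all that is needed to invoke~\eqref{RelMinN}.
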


In case of linear system we can provide an unconditional result. The proof is the same as that of Theorem~3.1 in~\cite{EZ2011} (or Theorem~3.11 in~\cite{EZ2010}).

\begin{theorem} \label{theoNishioka}
Let $\kk$ be a field of an arbitrary characteristic and $\T:\mpp^1_{\kk}\times\mpp^n_{\kk}\rightarrow\mpp^1_{\kk}\times\mpp^n_{\kk}$ a map defined by~(\ref{defT2}) with the polynomials $A_i$ linear in $\ul{X}$. Assume that
\begin{equation}\label{theoNishioka_lambda_pgq2}
    \lambda:=\ordz p(\b{z})\geq 2.
\end{equation}
Suppose that there is a solution $\ul{f}=(1,f_1(\b{z}),\dots,f_n(\b{z}))$ of the system of functional equations~(\ref{relsTopfer}) associated to $\T$. Denote by $t_{\ul{f}}$ the transcendence degree of $\ul{f}$ over $\kk(z)$ (see Definition~\ref{def_tf}).

Then there exists a constant $K_1$ such that for any polynomial $P\in\AnneauDePolynomes\setminus\idp_{\ul{f}}$ one has
\begin{equation*}
    \ordz(P(\ullt{f})) \leq K_1(\deg_{\ul{X'}}P + \deg_{\ul{X}}P + 1)(\deg_{\ul{X}} P + 1)^{t_{\ul{f}}}.
\end{equation*}
\end{theorem}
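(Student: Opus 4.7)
The plan is to deduce Theorem~\ref{theoNishioka} from Theorem~\ref{LMGPF} (equivalently, from Theorem~\ref{LMGP} in order to obtain the sharper exponent $t_{\ul{f}}$ rather than $n$), by verifying the two abstract hypotheses unconditionally in the linear case. Choose $n_1 = 1$ and $C_1 = 0$; then condition~(\ref{condition_n1F}) is tautological by Remark~\ref{rem_n1}. It therefore suffices to establish the order-growth inequality~(\ref{section_AB_IElambda}) for some $\lambda > 0$ and the uniform distance bound~(\ref{RelMinN}) on irreducible $\T^N$-stable proper subvarieties of $\mpp^1 \times \mpp^n$ for $N \leq C_m$.

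Condition~(\ref{section_AB_IElambda}) is essentially tautological here. In the linear case $A_0$ depends only on $\ul{X}'$ and each $A_i$, $i=1,\dots,n$, is linear in $\ul{X}$ with coefficients in $\kk[\ul{X}']$. Evaluating the map $\T$ at $\ullt{f}(z) = (1,z,1,f_1(z),\dots,f_n(z))$ amounts, after dividing by the scalar factor $A_0(1,z)^{\deg_{\ul{X}} Q}$, to substituting $z \mapsto p(z)$ inside $Q(\ullt{f}(z))$. Since $\ord_z p(z) = \lambda \geq 2$ by~(\ref{theoNishioka_lambda_pgq2}), this substitution multiplies $\Ord_z$ by $\lambda$, which gives~(\ref{section_AB_IElambda}) with precisely the constant from the hypothesis.

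The main step is the verification of~(\ref{RelMinN}). The strategy is exactly the one carried out in the proof of Theorem~3.1 of~\cite{EZ2011} (and Theorem~3.11 of~\cite{EZ2010}). An irreducible $\T^N$-stable $W \subsetneq \mpp^1 \times \mpp^n$ projects onto a $\T^N$-stable subvariety of $\mpp^1$; since the rational map $p$ has only finitely many periodic points and $W$ is irreducible, this projection is either a single periodic point of $p^N$ or the whole $\mpp^1$. In either case, linearity of the $A_i$ in $\ul{X}$ forces the generic fiber of $W$ to be a proper linear subspace of $\mpp^n$ invariant under the linear cocycle attached to $\T^N$. A Kolchin-Nishioka-type linear-algebra analysis of such invariant subspaces, combined with iteration of the $\lambda$-contraction in $z$ and a Liouville inequality (Lemma~\ref{lemma_Liouville_ie}), produces the required linear bound on $\ord_{\ullt{f}}(W)$ in terms of $\dd_{(0,\dim W)}W + \dd_{(1,\dim W - 1)}W$, with a constant $K_0$ depending only on $\T$ and $\ullt{f}$ (and $N \leq C_m$ absolute). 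The key point making the present generalization automatic is that this argument never invokes the algebraic independence of $f_1,\dots,f_n$ over $\kk(z)$: it uses only linearity of the system and the contraction $\lambda \geq 2$. Hence the verification transfers verbatim and is the main technical obstacle only in the sense that it is the step that requires care, not genuinely new work.

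With both hypotheses in force, Theorem~\ref{LMGP} (applied along the proof-skeleton of Theorem~\ref{LMGPF}) produces a constant $K_1$ such that every $P \in \AnneauDePolynomes \setminus \idp_{\ul{f}}$ satisfies the bound~(\ref{LdMpolynome2}); specialising to the linear Mahler framework yields the estimate claimed in Theorem~\ref{theoNishioka}, with exponent $t_{\ul{f}}$ coming directly from the $t$-exponent in~(\ref{LdMpolynome2}).
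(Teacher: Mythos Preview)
Your proposal is correct and follows the same route as the paper, which simply refers the reader to the proof of Theorem~3.1 in~\cite{EZ2011} (Theorem~3.11 in~\cite{EZ2010}) and observes that the argument transfers verbatim once the formal multiplicity lemma is replaced by Theorem~\ref{LMGP}. Your sketch of how that transferred argument goes is a reasonable expansion of what the paper leaves implicit.

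Two minor points. First, your claim that ``$A_0$ depends only on $\ul{X}'$'' is not what ``linear in $\ul{X}$'' means here; the $A_i$ (including $A_0$) are bi-homogeneous of degree~$1$ in $\ul{X}$. The correct reason~(\ref{section_AB_IElambda}) holds is simply that $\ul{f}$ solves~(\ref{relsTopfer2}), so projectively $\T(\ullt{f}(z))=\ullt{f}(p(z))$, and the substitution $z\mapsto p(z)$ multiplies $\ord_z$ by $\lambda$; linearity plays no role at this step. Second, the tautological choice in Remark~\ref{rem_n1} is $n_1=1+r_{\ul{f}}$, not $n_1=1$; however, since the class $\cK_{n_1}$ already requires $\idp_{\ul{f}}\subsetneq I$ (hence $\rg I\geq 1+r_{\ul{f}}$), the two choices yield the same class, so your choice is harmless.
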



\subsubsection*{Multiplicity estimates for solutions
of algebraic differential equations} \label{subsection_ApplicationsDifferential}

In this subsection we consider an $n$-tuple $\ull{f}=(f_1(\b{z}),\dots,f_n(\b{z}))$ of analytic functions (or, more generally, power series) satisfying a system of differential equations
\begin{equation} \label{syst_diff}
f_i'(\b{z})=\frac{A_i(\b{z},\ull{f})}{A_0(\b{z},\ull{f})}, \quad i=1,\dots,n,
\end{equation}
where $A_i(\b{z},X_1,\dots,X_n)\in\kk[\b{z},X_1,\dots,X_n]$ for $i=0,...,n$ (we suppose that $A_0$ is a non-zero polynomial).

We associate to the system~(\ref{syst_diff}) the following derivation
\begin{equation} \label{defD}
D = A_0(\b{z}, X_1,\dots, X_n)\diff{\b{z}} + \sum_{i=1}^nA_i(\b{z}, X_1,\dots, X_n)\diff{X_i}.
\end{equation}
This operator is an application $D:\kk[\b{z},X_1,\dots,X_n]\rightarrow\kk[\b{z},X_1,\dots,X_n]$. We also consider $D$ as acting on $\AnneauDePolynomes=\kk[X_0',X_1'][X_1,\dots,X_n]$ defining
\begin{equation} \label{defhD}
D = {^h\!A}_0(X_0',X_1', X_1,\dots, X_n)\diff{X_1'}+\sum_{i=1}^n{^h
\!A_i}(X_0',X_1', X_1,\dots, X_n)\diff{X_i},
\end{equation}
where $^h\!P$ denotes the bi-homogenization of the polynomial $P\in
\kk[\b{z},X_1,\dots,X_n]$:
\begin{equation*}
  ^h\!P(X_0',X_1', X_1,\dots, X_n):=X_0'^{\deg_{\b{z}}P}\cdot
X_0^{\deg_{\ul{X}}P}\cdot P\left(\frac{X_1'}{X_0'},\frac{X_1}{X_0},
\dots,\frac{X_n}{X_0}\right).
\end{equation*}
One readily verifies~$D({^h\!P})={^h\!\left(D(P)
\right)}$, so the application $D:\AnneauDePolynomes\rightarrow
\AnneauDePolynomes$ is exactly the "bi-homogenization" of $D:
\kk[\b{z},X_1,\dots,X_n]\rightarrow\kk[\b{z},X_1,\dots,X_n]$.

The application $D$ is a correct application with respect to any
ideal $\idp \subset \AnneauDePolynomes$, according to Corollary~2.11 of~\cite{EZ2011} (or also corollary~2.10 of~\cite{EZ2010}).

Let's deduce from Theorem~\ref{LMGP} an improvement of the following Nesterenko's famous theorem (proved in~\cite{N1996}):

\begin{theorem}[Nesterenko, see Theorem~1.1 of Chapter~10, \cite{NP}] \label{theoNesterenko_classique}
Suppose that functions
\begin{equation*}
\ull{f} = (f_1(\b{z}),\dots,f_n(\b{z})) \in \mcc[[\b{z}]]^n
\end{equation*}
are analytic at the point $\b{z}=0$ and form a solution of the system~(\ref{syst_diff}) with $\kk=\mcc$.
If there exists a constant $K_0$ such that every $D$-stable prime ideal $\idp \subset \mcc[X_1',X_1,\dots,X_n]$,
$\idp\ne(0)$, satisfies
\begin{equation} \label{ordIleqKdegI}
\min_{P \in \idp}\ordz P(\b{z},\ull{f}) \leq K_0,
\end{equation}
then there exists a constant $K_1>0$ such that for any polynomial $P \in
\mcc[X_1',X_1,\dots,X_n]$, $P\ne 0$, the following inequality holds
\begin{equation} 
\ordz(P(\b{z},\ull{f})) \leq K_1(\deg_{\ul{X}'} P + 1)(\deg_{\ul{X}} P + 1)^n.
\end{equation}
\end{theorem}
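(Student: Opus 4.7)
The plan is to realize this theorem as a direct application of our general multiplicity lemma, Theorem~\ref{LMGP}. I take $\phi:=D$, the bi-homogeneous self-map of $\A$ obtained from the system~\eqref{syst_diff} via formula~\eqref{defhD}, and choose the parameters $n_1:=1+r_{\ul{f}}$ and $C_1:=0$, so that condition~\eqref{condition_n1} is automatic by Remark~\ref{rem_n1}. The work then reduces to verifying the two standing hypotheses of Theorem~\ref{LMGP}: that $\phi$ is $\ul{f}$-admissible, and that $\ul{f}$ has the $(\phi,\cK_{n_1})$-property.

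The admissibility of $\phi$ is essentially mechanical. The degree estimates~\eqref{degphiQleqdegQ} are read off from the explicit formula~\eqref{defhD}, with constants $\mu,\nu_0,\nu_1$ depending only on the bi-degrees of the polynomials $A_0,\dots,A_n$. For the order inequality~\eqref{condition_T2_facile} I use the identity $D(Q)(\ul{f})=A_0(\ul{f})\cdot\frac{d}{dz}Q(z,f_1,\dots,f_n)$, from which $\ordz D(Q)(\ul{f})\geq\ordz Q(\ul{f})-1$ up to a fixed additive term coming from $\ordz A_0(\ul{f})$; this is enough for~\eqref{condition_T2_facile} to hold with any $\lambda\in(0,1)$ and a sufficiently large threshold $K_\lambda$. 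Finally, local correctness at $\ul{f}$ is automatic, since every derivation is correct with respect to every prime ideal of $\A$ (cf.\ Remark~\ref{remark_def_admissible}).

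Next I translate the Nesterenko $D$-property into the $(\phi,\cK_{n_1})$-property. Let $I\in\cK_{n_1}$ be $D$-stable. Since $I$ is equidimensional, $\Ass(\A/I)$ consists precisely of the minimal primes of $I$; and since $D$ is a derivation on a characteristic zero ring, a classical lemma of differential algebra ensures that each such minimal prime $\idq$ is itself $D$-stable. The inclusion $\rg\idq\geq n_1>\rg\idp_{\ul{f}}$, combined with $\idq\supseteq I\supseteq\idp_{\ul{f}}$, forces $\idq\supsetneq\idp_{\ul{f}}$, so in particular $\idq\ne(0)$, and Nesterenko's hypothesis yields a polynomial $P\in\idq$ with $\ordz P(z,\ul{f})\leq K_0$. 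Combined with the elementary bound $\ord_{\ul{f}}(\idq)\leq\min_{P\in\idq}\ordz P(\ul{f})$, this gives $\ord_{\ul{f}}(\idq)\leq K_0$, which implies~\eqref{def_RelMinN2} with constant $K_0+1$ since $\dd_{(0,n-\rg\idq+1)}(\idq)+\dd_{(1,n-\rg\idq)}(\idq)\geq 1$.

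With both hypotheses in place, Theorem~\ref{LMGP} delivers the bound~\eqref{LdMpolynome2}. Dehomogenizing via $X_0'=X_0=1$, absorbing the constants $\mu,\nu_0,\nu_1$ into a single $K_1$, and using the trivial inequality $t_{\ul{f}}\leq n$, we recover the claimed estimate $\ordz(P(z,\ul{f}))\leq K_1(\deg_{\ul{X}'}P+1)(\deg_{\ul{X}}P+1)^n$; in fact the same argument yields the sharper bound with $t_{\ul{f}}$ in place of $n$, which is the improvement alluded to in the text. The main technical ingredient is the differential-algebraic fact that a derivation stabilizes the minimal primes of any equidimensional ideal it stabilizes; the remainder is bookkeeping matching Nesterenko's hypothesis on prime ideals with the more flexible $(\phi,\cK)$-property of Definition~\ref{def_weakDproperty}.
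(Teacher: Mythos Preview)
Your proposal is correct and is precisely the route the paper intends. Note, however, that the paper does \emph{not} itself prove Theorem~\ref{theoNesterenko_classique}: it is quoted from~\cite{NP}, and the paper's contribution is the strengthening Theorem~\ref{LMGPD}, whose proof (deferred to~\cite{EZ2010,EZ2011}) proceeds exactly as you outline---deduce it from Theorem~\ref{LMGP} with $\phi=D$, $n_1=1+r_{\ul{f}}$, $C_1=0$, and use that in characteristic~$0$ a derivation stabilizing an ideal stabilizes each of its minimal primes, so Nesterenko's $D$-property on primes implies the $(\phi,\cK_{n_1})$-property.

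Two small remarks. First, the passage between the bi-homogeneous ring $\A$ and the affine ring $\mcc[X_1',X_1,\dots,X_n]$ in which Nesterenko's hypothesis is formulated deserves a word: you should dehomogenize the minimal prime $\idq$ before invoking~\eqref{ordIleqKdegI}, noting that $D({}^h P)={}^h(D(P))$ preserves $D$-stability and that choosing $C_0$ large enough in the definition of $\cK_{n_1}$ rules out primes containing $X_0$ or $X_0'$. Second, your closing comment about the ``sharper bound with $t_{\ul f}$'' is slightly misplaced here: under Nesterenko's hypothesis~\eqref{ordIleqKdegI}, the ideal $\idp_{\ul f}$ is itself a $D$-stable prime, so~\eqref{ordIleqKdegI} forces $\idp_{\ul f}=(0)$ and hence $t_{\ul f}=n$; the genuine gain in the exponent appears only under the weaker hypothesis of Theorem~\ref{LMGPD}.
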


\begin{remark}
Condition~\eqref{ordIleqKdegI} is the \emph{$D$-property}~\cite{N1996}.
Assuming $A_0(0,\ull{f}(0))\ne 0$ in the system~(\ref{syst_diff}), it is easy to verify the condition~(\ref{ordIleqKdegI}), cf.~\cite{NP}, chapitre 10, example~1 (p.~150).
Also, the condition~(\ref{ordIleqKdegI}) is established in the case when the polynomials $A_i$, $i=0,\dots,n$ are of degree 1 in $X_1,\dots,X_n$, cf.~\cite{N1974}.
In the latter case the proof is based on the differential Galois theory.
\end{remark}

In what follows we denote by $\cK_{prime}$ the class of prime ideals of $\A$ and $\cK_{primary}$ the class of primary ideals of $\A$. Using Theorem~\ref{LMGP} we can replace~(\ref{ordIleqKdegI}) in Theorem~\ref{theoNesterenko_classique} by
a weaker assumption, notably a $\left(D,\cK_{prime}\right)$-property (see Definition~\ref{def_weakDproperty}). In the same time we provide a result valid in an arbitrary characteristic.

\begin{theorem} \label{LMGPD}
Let
$(f_1(\b{z}),\dots,f_n(\b{z})) \in \kk[[\b{z}]]^n$
be a set of formal power series forming a solution of the system~(\ref{syst_diff}). 
We assume that $\ul{f}=(1:z,1:f_1(\b{z}):\dots:f_n(\b{z})$ satisfies the $\left(\phi,\cK_{primary}\right)$-property, and if $\car\kk=0$ we assume only that $\ul{f}$ satisfies the $\left(\phi,\cK_{prime}\right)$-property.
Under these conditions there is a constant $K>0$ such that every $P \in
\AnneauDePolynomes \setminus \idp_{\ul{f}}$ 
satisfies 
\begin{equation} 
\ordz(P(\b{z},\ull{f})) \leq K(\deg_{\ul{X'}} P + 1)(\deg_{\ul{X}} P + 1)^{t_{\ul{f}}}.
\end{equation}
\end{theorem}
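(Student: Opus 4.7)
The plan is to reduce Theorem~\ref{LMGPD} to the general formal multiplicity lemma, Theorem~\ref{LMGP}, by verifying that the derivation $D$ defined by~\eqref{defhD} is $\ul{f}$-admissible and that the $(\phi,\cK_{primary})$ (respectively $(\phi,\cK_{prime})$) hypothesis implies the stronger $(\phi,\cK_{n_1})$-property required by Theorem~\ref{LMGP}.

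First, I would verify that $D$ is $\ul{f}$-admissible in the sense of Definition~\ref{def_admissible}. The bounds~\eqref{degphiQleqdegQ} follow directly from the fact that each ${^h}A_i$ is bi-homogeneous of bi-degree $(s,q)$ and that $\partial/\partial X_1'$ and $\partial/\partial X_i$ shift bi-degrees by a fixed amount; this produces explicit constants $\mu,\nu_0,\nu_1$ depending only on $s$ and $q$. The order condition~\eqref{condition_T2_facile} holds with any $\lambda<1$ (and a suitable $K_\lambda$) because of the identity $D(P)(\ull{f})=A_0(z,\ul{f})\,\frac{d}{dz}P(\ull{f})$, which gives $\ordz D(P)(\ull{f})\geq\ordz P(\ull{f})-1$. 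Local correctness at $\ul{f}$ (Definition~\ref{def_locally_correct}) is immediate since by Corollary~2.11 of~\cite{EZ2011} every derivation is correct with respect to any prime ideal, so we may take $C_0=0$.

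Next, I would deduce the $(\phi,\cK_{n_1})$-property with $n_1=1+r_{\ul{f}}$ from the hypothesis (cf.\ Remark~\ref{rem_n1}). Let $I\in\cK_{n_1}$ be an equidimensional $D$-stable ideal and let $\idp_1,\dots,\idp_s$ be its (minimal, hence associated) primes. In characteristic zero, the classical fact that the radical of a $D$-stable ideal is $D$-stable implies each $\idp_i$ is a $D$-stable prime, and the $(\phi,\cK_{prime})$-hypothesis applied to any $\idp_i$ yields the bound~\eqref{def_RelMinN2}. In arbitrary characteristic I argue instead via primary components: $D$ extends canonically to $A_{\idp_i}$ through the quotient rule $D(a/s)=(sD(a)-aD(s))/s^2$, so $IA_{\idp_i}$ is $D$-stable in $A_{\idp_i}$, and its contraction $Q_i:=IA_{\idp_i}\cap A$ (the $\idp_i$-primary component of $I$) is therefore a $D$-stable primary ideal. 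The $(\phi,\cK_{primary})$-hypothesis applied to $Q_i$ then gives the bound for its unique associated prime $\idp_i$, which is also an associated prime of $I$. In either case the $(\phi,\cK_{n_1})$-property is established.

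Finally, I would apply Theorem~\ref{LMGP} with $n_1=1+r_{\ul{f}}$ and $C_1=0$, absorbing the constants $\mu,\nu_0,\nu_1$ and $\mu^{n-1}$ (all depending only on the system $(A_0,\dots,A_n)$) into the single constant $K$ of the final inequality. The principal obstacle is the positive-characteristic reduction: recognising that primary components of a $D$-stable equidimensional ideal remain $D$-stable is essential because the minimal primes themselves need not be $D$-stable in characteristic $p$ (e.g.\ $(x^p)$ is $d/dx$-stable while $(x)$ is not). A secondary technical point is to reconcile the upper bound supplied by Theorem~\ref{LMGP}, which carries an extra $\nu_1\deg_{\ul{X}}P$ term, with the cleaner expression $(\deg_{\ul{X}'}P+1)(\deg_{\ul{X}}P+1)^{t_{\ul{f}}}$ stated in the conclusion; this requires an accounting of the effective constants attached to the differential operator $D$.
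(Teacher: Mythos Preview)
Your plan is correct and is precisely the route the paper indicates: the paper gives no proof here, stating only that one transfers the argument from~\cite{EZ2010,EZ2011} by replacing the old formal multiplicity lemma with Theorem~\ref{LMGP}. Your reduction of the $(\phi,\cK_{n_1})$-property (with $n_1=1+r_{\ul{f}}$, cf.\ Remark~\ref{rem_n1}) to the $(\phi,\cK_{prime})$-hypothesis via Seidenberg's theorem in characteristic~$0$, and to the $(\phi,\cK_{primary})$-hypothesis via $D$-stability of localisations in arbitrary characteristic, is exactly the intended mechanism; the positive-characteristic obstacle you flag (minimal primes of a $D$-stable ideal need not be $D$-stable, e.g.\ $(x^p)$ versus $(x)$) is the real reason the hypothesis is strengthened to primary ideals there.

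One technical point deserves more care than you give it. The growth of bi-degrees under the derivation $D$ of~\eqref{defhD} is \emph{additive} (each application adds at most $(s,q)$), not multiplicative, so the inequalities~\eqref{degphiQleqdegQ} do not hold with constants $\mu,\nu_0,\nu_1$ independent of $Q$ in the literal sense you suggest. In the references~\cite{EZ2010,EZ2011} this is handled either by restricting to polynomials with $\deg_{\ul{X}}Q\geq 1$ and $\deg_{\ul{X}'}Q\geq 1$ (where $\mu=1+q$, $\nu_0=1+s$, $\nu_1=0$ suffice) or by tracking the additive growth directly through Lemma~\ref{majorationphinQ}; either way the term $\nu_1\deg_{\ul{X}}P$ in~\eqref{LdMpolynome2} drops out and one recovers the clean bound $(\deg_{\ul{X}'}P+1)(\deg_{\ul{X}}P+1)^{t_{\ul{f}}}$. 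You correctly identify this reconciliation as a secondary difficulty, but your one-line justification of~\eqref{degphiQleqdegQ} is too quick.
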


\begin{center}%
         {\bfseries Acknowledgement\vspace{-.5em}}%
\end{center}%
\thanks{
The author would like to express his profound gratitude to Patrice \textsc{Philippon}. His interventions at many stages of  this research was of decisive importance.}

{\small



\def\cprime{$'$} \def\cprime{$'$} \def\cprime{$'$} \def\cprime{$'$}
  \def\cprime{$'$} \def\cprime{$'$} \def\cprime{$'$} \def\cprime{$'$}
  \def\cprime{$'$} \def\cprime{$'$} \def\cprime{$'$} \def\cprime{$'$}
  \def\cprime{$'$} \def\cprime{$'$} \def\cprime{$'$} \def\cprime{$'$}
  \def\polhk#1{\setbox0=\hbox{#1}{\ooalign{\hidewidth
  \lower1.5ex\hbox{`}\hidewidth\crcr\unhbox0}}} \def\cprime{$'$}
  \def\cprime{$'$}

\bigskip


\noindent{\footnotesize EZ\,: }\begin{minipage}[t]{0.9\textwidth}
\footnotesize{\sc University of York}\\
{\it E-mail address}\,:~~ \verb|evgeniy.zorin@york.ac.uk|\quad\emph{or}\quad\verb|EvgeniyZorin@yandex.ru|
\end{minipage}

}

\end{document}